\DeclareMathOperator{\He}{\mathcal{H}}
\newcommand{\lowrru}[1]{\xybox{%
  (-8,0)*{};
  (8,0)*{};
  (-6,-18)*{};(6,-9)*{} **\crv{(-6,-13) & (6,-15)} ?(1)*\dir{>};
  (6,-9)*{};(6,0)*{}  **\dir{-} ?(.3)*\dir{ }+(2,0)*{\scs {\bf j}};
}}
\newcommand{\lowllu}[1]{\xybox{%
  (-8,0)*{};
  (8,0)*{};
  (6,-18)*{};(-6,-9)*{} **\crv{(6,-13) & (-6,-15)} ?(1)*\dir{>};
  (-6,-9)*{};(-6,0)*{}  **\dir{-} ?(.3)*\dir{ }+(-2,0)*{\scs {\bf j}};
}}
\newcommand{\bbsid}{\xybox{%
  (-2,0)*{};
  (2,0)*{};
  (0,10);(0,4) **\dir{-};
}}
\newcommand{\bbpef}[1]{\xybox{%
  (-6,0)*{};
  (6,0)*{};
  (-4,0)*{}="t1";
  (4,0)*{}="t2";
  "t1";"t2" **\crv{(-4,-6) & (4,-6)}; ?(.15)*\dir{>} ?(.9)*\dir{>}
   ?(.5)*\dir{}+(0,-2)*{\scriptstyle{#1}};
}}
\newcommand{\bbpfe}[1]{\xybox{%
  (-6,0)*{};
  (6,0)*{};
  (-4,0)*{}="t1";
  (4,0)*{}="t2";
  "t2";"t1" **\crv{(4,-6) & (-4,-6)}; ?(.15)*\dir{>} ?(.9)*\dir{>}
  ?(.5)*\dir{}+(0,-2)*{\scriptstyle{#1}};
}}
\newcommand{\bbcfe}[1]{\xybox{%
  (-6,0)*{};
  (6,0)*{};
  (-4,0)*{}="t1";
  (4,0)*{}="t2";
  "t1";"t2" **\crv{(-4,6) & (4,6)}; ?(.15)*\dir{>} ?(.9)*\dir{>}
  ?(.5)*\dir{}+(0,2)*{\scriptstyle{#1}};
}}
\newcommand{\bbcef}[1]{\xybox{%
  (-6,0)*{};
  (6,0)*{};
  (-4,0)*{}="t1";
  (4,0)*{}="t2";
  "t2";"t1" **\crv{(4,6) & (-4,6)}; ?(.15)*\dir{>}
  ?(.9)*\dir{>} ?(.5)*\dir{}+(0,2)*{\scriptstyle{#1}};
}}
\newcommand{\ccbub}[2]{
\xybox{%
 (-6,0)*{};
  (6,0)*{};
  (-4,0)*{}="t1";
  (4,0)*{}="t2";
  "t2";"t1" **\crv{(4,6) & (-4,6)}; ?(.7)*\dir{}+(-2,0)*{\scs #2}
  ?(.05)*\dir{>} ?(1)*\dir{>};
  "t2";"t1" **\crv{(4,-6) & (-4,-6)};
   ?(.3)*\dir{}+(0,0)*{\bullet}+(0,-3)*{\scs {#1}};
}}
\newcommand{\cbub}[2]{
\xybox{%
 (-6,0)*{};
  (6,0)*{};
  (-4,0)*{}="t1";
  (4,0)*{}="t2";
  "t2";"t1" **\crv{(4,6) & (-4,6)};?(.7)*\dir{}+(-2,0)*{\scs #2};
   ?(0)*\dir{<} ?(.95)*\dir{<};
  "t2";"t1" **\crv{(4,-6) & (-4,-6)};
   ?(.3)*\dir{}+(0,0)*{\bullet}+(0,-3)*{\scs {#1}};
}}
\newcommand{\nccbub}{
\xybox{%
 (-6,0)*{};
  (6,0)*{};
  (-4,0)*{}="t1";
  (4,0)*{}="t2";
  "t2";"t1" **\crv{(4,6) & (-4,6)}; ?(.05)*\dir{>} ?(1)*\dir{>};
  "t2";"t1" **\crv{(4,-6) & (-4,-6)}; ?(.3)*\dir{};
}}
\newcommand{\ncbub}{
\xybox{%
 (-6,0)*{};
  (6,0)*{};
  (-4,0)*{}="t1";
  (4,0)*{}="t2";
  "t2";"t1" **\crv{(4,6) & (-4,6)}; ?(0)*\dir{<} ?(.95)*\dir{<};
  "t2";"t1" **\crv{(4,-6) & (-4,-6)}; ?(.3)*\dir{};
}}
\newcommand{\bbdl}[1]{\xybox{%
  (2,0);(0,-8) **\crv{(2,-2)&(0,-6)}; ?(.5)*\dir{>}
}}
\newcommand{\bbdlu}[1]{\xybox{%
  (2,0);(0,-8) **\crv{(2,-2)&(0,-6)}; ?(.5)*\dir{<}
}}
\newcommand{\bbdr}[1]{\xybox{%
  (-2,0);(0,-8) **\crv{(-2,-2)&(0,-6)}; ?(.5)*\dir{>}
}}
\newcommand{\bbdru}[1]{\xybox{%
  (-2,0);(0,-8) **\crv{(-2,-2)&(0,-6)}; ?(.5)*\dir{<}
}}
\newcommand{\refequal}[1]{\xy {\ar@{=}^{#1}
(-1,0)*{};(1,0)*{}};
\endxy}
\newcommand{\SD}{{\bf S}}
\newcommand{\Uaff}{\dot{{\bf U}}(\hat{\mathfrak{sl}}_n)}
\newcommand{\Uglaff}{\dot{{\bf U}}(\hat{\mathfrak{gl}}_n)}
\newcommand{\Uglaffext}{\hat{\dot{{\bf U}}}(\hat{\mathfrak{gl}}_n)}
\newcommand{\Ucataff}{\cal{U}(\widehat{\mathfrak sl}_n)}
\newcommand{\Scat}{\hat{\mathcal{S}}}
\newcommand{\Ucupr}{\xy (-2,1)*{}; (2,1)*{} **\crv{(-2,-3) & (2,-3)} ?(1)*\dir{>}; \endxy}
\newcommand{\Ucupl}{\xy (2,1)*{}; (-2,1)*{} **\crv{(2,-3) & (-2,-3)}?(1)*\dir{>};\endxy}
\newcommand{\Ucupli}{\xy (0,-1)*{\dblue\xybox{(2,1)*{}; (-2,1)*{} **\crv{(2,-3) & (-2,-3)}?(1)*\dir{>};}}\endxy}
\newcommand{\Ucapr}{\xy (-2,-1)*{}; (2,-1)*{} **\crv{(-2,3) & (2,3)}?(1)*\dir{>};\endxy\;\;}
\newcommand{\Ucapl}{\xy (2,-1)*{}; (-2,-1)*{} **\crv{(2,3) &(-2,3) }?(1)*\dir{>};\endxy\;}
\newcommand{\Ucapli}{\xy (0,1)*{\dblue\xybox{(2,-1)*{}; (-2,-1)*{} **\crv{(2,3) &(-2,3) }?(1)*\dir{>};}}\endxy\;}
\newcommand{\EEDelta}{\xy (0,0)*{\figs{0.1}{SymbEEDelta}}\endxy}
\newcommand{\FFDelta}{\xy (0,0)*{\figs{0.1}{SymbFFDelta}}\endxy}
\newcommand{\DeltaEE}{\xy (0,0)*{\figs{0.1}{SymbDeltaEE}}\endxy}
\newcommand{\DeltaFF}{\xy (0,0)*{\figs{0.1}{SymbDeltaFF}}\endxy}
\newcommand{\BOX}{\hbox {$\sqcap$ \kern -1em $\sqcup$}}
\renewcommand{\to}{\rightarrow}
\newcommand{\scs}{\scriptstyle}
\theoremstyle{definition}
\newtheorem{thm}{Theorem}[section]
\newtheorem{cor}[thm]{Corollary}
\newtheorem{lem}[thm]{Lemma}
\newtheorem{rem}[thm]{Remark}
\newtheorem{prop}[thm]{Proposition}
\newtheorem{defn}[thm]{Definition}
\newtheorem{notat}[thm]{Notation}
        \newcommand{\be}{\begin{equation}}
        \newcommand{\ee}{\end{equation}}
        \newcommand{\ba}{\begin{eqnarray}}
        \newcommand{\ea}{\end{eqnarray}}
        \newcommand{\ban}{\begin{eqnarray*}}
        \newcommand{\ean}{\end{eqnarray*}}
        \newcommand{\barr}{\begin{array}}
        \newcommand{\earr}{\end{array}}
\numberwithin{equation}{section}
\def\emph#1{{\sl #1\/}}
\let\hat=\widehat
\let\phi=\varphi
\let\epsilon=\varepsilon
\def\Q{{\mathbbm Q}}
\def\cal#1{\mathcal{#1}}%
\def\1{\mathbbm{1}}%
\def\shuffle{\,\raise 1pt\hbox{$\scriptscriptstyle\cup{\mskip
               -4mu}\cup$}\,}
\newcommand{\figs}[2] 
{{\includegraphics[scale =#1]{#2}}}
\newcommand{\bbox}[1]{\framebox{$\scs #1$}}
\newcommand{\ii}{\underline{\textbf{\textit{i}}}}
\newcommand{\jj}{\underline{\textbf{\textit{j}}}}
\newcommand{\bZ}{\mathbb{Z}}
\newcommand{\bQ}{\mathbb{Q}}
\title{A diagrammatic categorification of the affine $q$-Schur algebra 
$\hat{\SD}(n,n)$ for $n\geq 3$}
\author{Marco Mackaay and Anne-Laure Thiel}
\thanks{The two authors were supported by the FCT - Funda\c c\~{a}o para a 
Ci\^{e}ncia e a Tecnologia, through project number PTDC/MAT/101503/2008, 
New Geometry and Topology.}
\date{}
\begin{document}
\begin{abstract}
This paper is a follow-up to~\cite{MTh}. 
In that paper we categorified the affine $q$-schur algebra 
$\hat{\SD}(n,r)$ for $2<r<n$, using a quotient of Khovanov and Lauda's 
categorification of ${\mathbf U}_q(\hat{\mathfrak sl}_n)$~\cite{KL1,KL2,KL3}. 
In this paper we categorify $\hat{\SD}(n,n)$ for $n\geq 3$, using an 
extension of the aforementioned quotient.
\end{abstract}
\maketitle
\paragraph*{Acknowledgements}
We thank Jie Du and Qiang Fu for helpful exchanges of emails on the affine 
quantum Schur algebras.  

\tableofcontents

\section{Introduction}
The affine $q$-Schur algebras were defined by Green~\cite{Gr} 
for any $n,r\geq 3$, and are quotients of 
${\mathbf U}_q(\hat{\mathfrak{sl}}_n)$ and 
${\mathbf U}_q(\hat{\mathfrak{gl}}_n)$ if $r<n$. In~\cite{MTh} 
we defined a quotient of 
Khovanov and Lauda's categorification $\mathcal{U}(\hat{\mathfrak{sl}}_n)$, 
denoted $\hat{\mathcal S}(n,r)$, and showed that the Grothendieck group of 
its Karoubi envelope (idempotent completion) was exactly isomorphic 
to $\hat{\SD}(n,r)$ for $2<r<n$. In order to establish the isomorphism, we 
used Doty and Green's~\cite{DG} idempotented presentation of 
$\hat{\SD}(n,r)$ for $2<r<n$.

The case addressed in this paper is slightly more complicated, because 
$\hat{\SD}(n,n)$ is not a quotient of 
${\mathbf U}_q(\hat{\mathfrak{sl}}_n)$ or ${\mathbf U}_q(\hat{\mathfrak{gl}}_n)$ 
but of the strictly larger algebra $\hat{\mathbf U}_q(\hat{\mathfrak{gl}}_n)$, 
called the {\em extended} affine general linear quantum algebra and 
also due to Green~\cite{Gr}. Therefore, we have to extend the 
Khovanov-Lauda calculus of the corresponding 
quotient of $\mathcal{U}(\hat{\mathfrak{sl}}_n)$ by adding certain 
generating $1$ and $2$-morphisms and relations. We denote that extended 
$2$--category by $\hat{\mathcal S}(n,n)$ and show that 
the Grothendieck group of its Karoubi envelope 
is isomorphic to $\hat{\SD}(n,n)$ for $n\geq 3$. For that isomorphism we 
use Deng, Du and Fu's~\cite{DDF} presentation of $\hat{\SD}(n,n)$, 
which extends Doty and Green's.  
   
A little warning should be made. 
The results in this paper are not sufficient to categorify 
$\hat{\mathbf U}_q(\hat{\mathfrak{gl}}_n)$ diagrammatically, because 
that would require a categorification of $\hat{\SD}(n,r)$ for $2<n<r$ too. 
However, no Drinfeld-Jimbo type presentation of $\hat{\SD}(n,r)$ is known 
in that case, so even on the decategorified level there is an open question 
that would need to be solved first. For more information on this problem, 
see Question 4.3.2 in~\cite{Gr} and Chapter 5 in~\cite{DDF}. 

There is another technical detail that we should explain beforehand. 
In~\cite{MTh} we introduced a new degree-two variable $y$ 
and a $y$-deformation of the relations in 
Khovanov and Lauda's ${\mathcal U}(\hat{\mathfrak{sl}}_n)$, 
denoted ${\mathcal U}(\hat{\mathfrak{sl}}_n)_{[y]}$. 
The corresponding Schur quotients were denoted $\hat{\mathcal S}(n,r)_{[y]}$. 
This $y$-deformation was introduced in order to establish 
a precise relation between $\hat{\mathcal S}(n,r)_{[y]}$ and an extension of 
the affine singular Soergel bimodules built from 
Soergel's reflection faithful representation of the affine Weyl group, 
which were defined and studied by Williamson~\cite{Wi}. 
However, we also proved that the ideals generated by $y$ are virtually 
nilpotent, so that the Grothendieck groups of 
${\mathcal U}(\hat{\mathfrak{sl}}_n)_{[y]}$ and $\hat{\mathcal S}(n,r)_{[y]}$ 
are isomorphic to those of ${\mathcal U}(\hat{\mathfrak{sl}}_n)$ and 
$\hat{\mathcal S}(n,r)$. Furthermore, for $y=0$ the $2$-representations 
in~\cite{MTh} give $2$-functors from $\mathcal{U}(\hat{\mathfrak{sl}}_n)$ 
to certain extensions of the affine Soergel bimodules built from 
the geometric representation of the affine Weyl group, which is not 
reflection faithful but still has some nice properties (for more information 
on this topic, see Section 3.1 in~\cite{EW} and the results in~\cite{Li1}). 
In order to keep the calculations simple in this paper, we 
put $y=0$ here. It would not be hard to give the $y$-deformed relations in the 
definition of $\hat{\mathcal S}(n,n)$, which would give a 
$2$-category $\hat{\mathcal S}(n,n)_{[y]}$, but some of 
the subsequent calculations would be much harder in the $y$-deformed 
setting, e.g. the ones in the proof of 
Proposition~\ref{prop:polidigonreswithdots}. 

In general, it would be interesting to know more about the relation 
between $\hat{\mathcal S}(n,r)$, for $n\geq r$, and its $y$-deformation 
and the $2$-category of affine singular Soergel bimodules. 

Knowing more about this relation might also help to establish a 
connection with the work by Lusztig~\cite{LuAff} and 
Ginzburg and Vasserot~\cite{GV} on perverse sheaves and affine quantum 
$\mathfrak{gl}_n$. 

Finally, we thank Ben Webster for sharing with us his ideas on 
a potentially interesting connection between our results in this paper and 
his work in~\cite{Web2}, the precise formulation of which remains to be 
worked out. 

\section{affine quantum algebras}
In this section, we first recall the definition of the extended affine quantum 
general linear algebra $\hat{\mathbf U}_q(\hat{\mathfrak{gl}}_n)$ and 
its subalgebras ${\mathbf U}_q(\hat{\mathfrak{gl}}_n)$ and 
${\mathbf U}_q(\hat{\mathfrak{sl}}_n)$. After that, we recall the definition of 
the affine quantum Schur algebras $\hat{\SD}(n,r)$, due to Green~\cite{Gr}. 
Furthermore, we recall an idempotented presentation of the affine 
quantum Schur algebras, due to Doty and Green~\cite{DG} for 
$n>r$ and to Deng, Du and Fu~\cite{DDF} for $n=r$. 

\subsection{The (extended) affine quantum general and special linear algebras}
For the rest of this paper, let $n\geq 3$.

Since in this paper we are only interested in the affine quantum general and special linear algebras at level $0$, 
i.e. the $q$-analogue of the loop algebras without central extension, 
we can work with the normal $\mathfrak{gl}_n$-weight lattice, which is isomorphic to $\bZ^n$. Let 
$\epsilon_i=(0,\ldots,1,\ldots,0)\in \bZ^n$, with $1$ being on the $i$th 
coordinate, and $\alpha_i=\epsilon_i-\epsilon_{i+1}\in\bZ^{n}$, for 
$i=1,\ldots,n$, where the subscripts have to be understood modulo $n$, e.g. $\alpha_n=\epsilon_n-\epsilon_1=(-1,0,\ldots,0,1)$. 
We also define the Euclidean inner product on $\bZ^n$ by  
$\left< \epsilon_i,\epsilon_j\right>=\delta_{i,j}$. 
   
\begin{defn}\cite{Gr} The {\em extended quantum general linear algebra} 
$\hat{\mathbf U}_q(\hat{\mathfrak{gl}}_n)$ is 
the associative unital $\bQ(q)$-algebra generated by $R^{\pm 1}$, $K_i^{\pm 1}$ and $E_{\pm i}$, for $i=1,\ldots, n$, subject to the relations
\begin{gather}
K_iK_j=K_jK_i\quad K_iK_i^{-1}=K_i^{-1}K_i=1
\\
E_iE_{-j} - E_{-j}E_i = \delta_{i,j}\dfrac{K_iK_{i+1}^{-1}-K_i^{-1}K_{i+1}}{q-q^{-1}}
\\
K_iE_{\pm j}=q^{\pm \left< \epsilon_i,\alpha_j \right>}E_{\pm j}K_i
\\
E_{\pm i}^2E_{\pm j}-(q+q^{-1})E_{\pm i}E_{\pm j}E_{\pm i}+E_{\pm j}E_{\pm i}^2=0
\qquad\text{if}\quad |i-j|=1\mod n
\\
E_{\pm i}E_{\pm j}-E_{\pm j}E_{\pm i}=0\qquad\text{else}\\
RR^{-1}=R^{-1}R=1\\
RX_iR^{-1}=X_{i+1}\qquad\text{for}\;X_i\in\{E_{\pm i},K_i^{-1}\}.
\end{gather} 
In all equations, the subscripts have to be read modulo $n$.
\end{defn}

\begin{defn} 
\label{defn:qsln}
The {\em affine quantum general linear algebra} 
${\mathbf U}_q(\hat{\mathfrak{gl}}_n)\subseteq \hat{\mathbf U}_q(\hat{\mathfrak{gl}}_n)$ is 
the unital $\bQ(q)$-subalgebra generated by $E_{\pm i}$ and $K_i^{\pm 1}$, for $i=1,\ldots,n$. 

The {\em affine quantum special linear algebra} 
${\mathbf U}_q(\hat{\mathfrak{sl}}_n)\subseteq {\mathbf U}_q(\hat{\mathfrak{gl}}_n)$
is the unital $\bQ(q)$-subalgebra generated by 
$E_{\pm i}$ and $K_iK^{-1}_{i+1}$, for $i=1,\ldots, n$.
\end{defn}

\begin{rem}
A little warning about the notation is needed here. 
Our notation follows that of~\cite{DG,Gr}, which differs from that of
~\cite{DDF}. What we call ${\mathbf U}_q(\hat{\mathfrak{gl}}_n)$, 
Deng, Du and Fu call $U_{\Delta}(n)$. In Remark 5.3.2~\cite{DDF} 
they define $\hat{\mathbf U}$, which is equal to our 
$\hat{\mathbf U}_q(\hat{\mathfrak{gl}}_n)$. Finally, 
their ${\mathbf U}(\hat{\mathfrak{gl}}_n)$ is the {\em quantum loop algebra} 
(see their Definition 2.3.1), which contains 
$U_{\Delta}(n)$, i.e. our ${\mathbf U}_q(\hat{\mathfrak{gl}}_n)$, as a proper 
subalgebra. In their notation, $\hat{\mathbf U}$ 
is not a subalgebra of ${\mathbf U}(\hat{\mathfrak{gl}}_n)$, 
because $R\in\hat{\mathbf U}$ would have to be equal to 
an infinite linear combination of generators of the latter. 
\end{rem}

We will also need the bialgebra structure on 
$\hat{\mathbf U}_q(\hat{\mathfrak{gl}}_n)$.
\begin{defn}\cite{Gr} $\hat{\mathbf U}_q(\hat{\mathfrak{gl}}_n)$ is a bialgebra with 
counit 
$\epsilon\colon \hat{\mathbf U}_q(\hat{\mathfrak{gl}}_n)\to \Q(q)$
defined by 
$$\epsilon(E_{\pm i})=0,\qquad\epsilon(R^{\pm 1})=\epsilon(K_i^{\pm 1})=1$$
and coproduct 
$\Delta\colon \hat{\mathbf U}_q(\hat{\mathfrak{gl}}_n)\to 
\hat{\mathbf U}_q(\hat{\mathfrak{gl}}_n)\otimes \hat{\mathbf U}_q(\hat{\mathfrak{gl}}_n),$ 
defined by 
\begin{eqnarray}
\Delta(1)&=&1\otimes 1\\
\Delta(E_i)&=&E_i\otimes K_iK_{i+1}^{-1}+1\otimes E_i\\
\Delta(E_{-i})&=&K_i^{-1}K_{i+1}\otimes E_{-i}+E_{-i}\otimes 1\\
\Delta(K_i^{\pm 1})&=&K_i^{\pm 1}\otimes K_i^{\pm 1}\\
\Delta(R^{\pm 1})&=&R^{\pm 1}\otimes R^{\pm 1}.
\end{eqnarray}
\end{defn} 
As a matter of fact, $\hat{\mathbf U}_q(\hat{\mathfrak{gl}}_n)$ is 
even a Hopf algebra, but we do not need the antipode in this paper. Note 
that $\Delta$ and $\epsilon$ can be restricted to 
${\mathbf U}_q(\hat{\mathfrak{gl}}_n)$ and 
${\mathbf U}_q(\hat{\mathfrak{sl}}_n)$, which are bialgebras too. 

At level 0, we can also work with the ${\mathbf U}_q(\mathfrak{sl}_n)$-weight 
lattice, which is 
isomorphic to $\bZ^{n-1}$. Suppose that $V$ is a 
${\mathbf U}_q(\hat{\mathfrak{gl}}_n)$-weight representation with 
weights $\lambda=(\lambda_1,\ldots,\lambda_n)\in\bZ^n$, i.e. 
$$V\cong \bigoplus_{\lambda}V_{\lambda}$$ 
and $K_i$ acts as multiplication by 
$q^{\lambda_i}$ on $V_{\lambda}$. Then $V$ is also a 
${\mathbf U}_q(\hat{\mathfrak{sl}}_n)$-weight representation with weights 
$\overline{\lambda}=(\overline{\lambda}_1,\ldots,\overline{\lambda}_{n-1})\in
\bZ^{n-1}$ such that 
$\overline{\lambda}_j=\lambda_j-\lambda_{j+1}$ for $j=1,\ldots,n-1$. 
Conversely, given a ${\mathbf U}_q(\hat{\mathfrak{sl}}_n)$-weight 
representation with weights $\mu=(\mu_1,\ldots,\mu_{n-1})$, there is not a 
unique choice of ${\mathbf U}_q(\hat{\mathfrak{gl}}_n)$-action on $V$. We can 
fix this by choosing the action of $K_1\cdots K_n$. In terms of weights, this 
corresponds to the observation that, for any $r\in\bZ$ the equations 
\begin{align}
\label{eq:sl-gl-wts1}
\lambda_i-\lambda_{i+1}&=\mu_i\\
\label{eq:sl-gl-wts2}
\qquad \sum_{i=1}^{n}\lambda_i&=r
\end{align}  
determine $\lambda=(\lambda_1,\ldots,\lambda_n)$ uniquely, 
if there exists a solution to~\eqref{eq:sl-gl-wts1} and~\eqref{eq:sl-gl-wts2} 
at all. To fix notation, we 
define the map $\phi_{n,r}\colon \bZ^{n-1}\to \bZ^{n}\cup \{*\}$ by 
\begin{equation}
\label{eq:psi}
\phi_{n,r}(\mu)=\lambda 
\end{equation}
if~\eqref{eq:sl-gl-wts1} and \eqref{eq:sl-gl-wts2} have a solution, and 
put $\phi_{n,r}(\mu)=*$ otherwise. This map already appeared in~\cite{MTh} 
and~\cite{MSVschur}.  

As far as weight representations are concerned, we can restrict our attention 
to the Beilinson-Lusztig-MacPherson~\cite{BLM} idempotented version of these 
quantum groups, denoted $\Uglaffext$, $\Uglaff$ and $\Uaff$ respectively. 
To understand their definition, recall that $K_i$ acts as $q^{\lambda_i}$ on the 
$\lambda$-weight space of any weight representation. 
For each $\lambda\in\bZ^n$ adjoin an idempotent $1_{\lambda}$ to 
$\hat{\mathbf U}_q(\hat{\mathfrak{gl}}_n)$ and add 
the relations
\begin{align*}
1_{\lambda}1_{\mu} &= \delta_{\lambda,\mu}1_{\lambda}   
\\
E_{\pm i}1_{\lambda} &= 1_{\lambda\pm\alpha_i}E_{\pm i}
\\
K_i1_{\lambda} &= q^{\lambda_i}1_{\lambda}
\\
R1_{(\lambda_1,\ldots,\lambda_n)}&= 1_{(\lambda_n,\lambda_1,\ldots,\lambda_{n-1})}R.
\end{align*}
\begin{defn} 
\label{defn:Uglndot}
The {\em idempotented extended affine quantum general linear algebra} is defined by 
$$\Uglaffext=\bigoplus_{\lambda,\mu\in\bZ^n}1_{\lambda}\hat{\mathbf U}_q(\hat{\mathfrak{gl}}_n)1_{\mu}.$$
\end{defn}
\noindent Of course one defines $\Uglaff\subset\Uglaffext$ as the 
idempotented subalgebra generated by $1_{\lambda}$ and $E_{\pm i}1_{\lambda}$, 
for $i=1,\ldots, n$ and $\lambda\in\mathbb{Z}^n$. Similarly 
for $\hat{\mathbf U}_q(\mathfrak{sl}_n)$, adjoin an idempotent $1_{\lambda}$ 
for each $\lambda\in\bZ^{n-1}$ and add the relations
\begin{align*}
1_{\lambda}1_{\mu} &= \delta_{\lambda,\mu}1_{\lambda}   
\\
E_{\pm i}1_{\lambda} &= 1_{\lambda\pm\alpha_i}E_{\pm i}
\\
K_iK^{-1}_{i+1}1_{\lambda} &= q^{\lambda_i}1_{\lambda}.
\end{align*}
\begin{defn} The {\em idempotented quantum special linear algebra} is defined by 
$$\Uaff=\bigoplus_{\lambda,\mu\in\bZ^{n-1}}1_{\lambda}{\mathbf U}_q(\hat{\mathfrak{sl}}_n)1_{\mu}.$$
\end{defn}

Just to fix notation for future use. 
\begin{notat} For $\ii=(\mu_1 i_1,\ldots,\mu_{m}i_{m})$, with 
$\mu_j=\pm$, define $$E_{\ii}:=E_{\mu_1 i_1}\cdots E_{\mu_{m} i_{m}}$$ 
and define $\ii_{\Lambda}\in\bZ^n$ to be the $n$-tuple such that 
$$E_{\ii}1_{\lambda}=1_{\lambda + \ii_{\Lambda}}E_{\ii}.$$
Following Khovanov and Lauda~\cite{KL1,KL2,KL3}, 
we call $\ii$ a {\em signed sequence} 
and denote the set of signed sequences by $\mathrm{SSeq}$. 
\end{notat}

\subsection{The affine $q$-Schur algebra}

As we did in~\cite{MTh}, we first copy some facts about 
the action of $\hat{\mathbf U}_q(\hat{\mathfrak{gl}}_n)$ on tensor space 
from~\cite{DG, Gr}. After that we define the quotient 
$\hat{\SD}(n,r)$, for $n\geq r$, and give a presentation of that algebra. 
Note that the case $n=r$ was not considered in~\cite{MTh}.   
 
\subsubsection{Tensor space}
Let $V$ be the $\mathbb{Q}(q)$-vector space freely generated by 
$\{e_t\mid t\in\mathbb{Z}\}$. 
\begin{defn}\cite{Gr}
The following defines an action of $\hat{\mathbf U}_q(\hat{\mathfrak{gl}}_n)$ on $V$
\begin{eqnarray}
E_ie_{t+1}=e_t&\text{if}\; i\equiv t\mod n\\
E_ie_{t+1}=0&\text{if}\; i\not\equiv t\mod n\\
E_{-i}e_t=e_{t+1}&\text{if}\; i\equiv t\mod n\\
E_{-i}e_t=0&\text{if}\; i\not\equiv t\mod n\\
K_i^{\pm 1}e_t=q^{\pm 1}e_t&\text{if}\; i\equiv t\mod n\\
K_i^{\pm 1}e_t=e_t&\text{if}\; i\not\equiv t\mod n\\
R^{\pm 1}e_t=e_{t\pm 1}&\text{for all}\; t\in\mathbb{Z}.
\end{eqnarray}
\end{defn} 
\noindent Note that $V$ is clearly a weight-representation 
of $\hat{{\mathbf U}}_q(\hat{\mathfrak{gl}}_n)$, with 
$e_t$ having weight equal to $\epsilon_i$, for $i\equiv t \mod n$. 
Therefore $V$ is also a representation of $\Uglaffext$. Let 
$r\in\mathbb{N}_{>0}$ be arbitrary but fixed. 
As usual, one extends the above action to $V^{\otimes r}$, 
using the coproduct in $\hat{\mathbf U}_q(\hat{\mathfrak{gl}}_n)$. Again, 
this is a weight-representation, and therefore also a 
representation of $\Uglaffext$. There is also a right 
action of the extended affine Hecke algebra $\hat{\He}_{\hat{A}_{r-1}}$ on $V^{\otimes r}$, 
whose precise definition is not relevant here, 
which commutes with the left action of $\hat{\mathbf U}_q(\hat{\mathfrak{gl}}_n)$. 

\begin{defn}\cite{Gr}
The {\em affine $q$-Schur algebra} $\hat{\SD}(n,r)$ is by definition the centralizing algebra $$\mbox{End}_{\hat{\He}_{\hat{A}_{r-1}}}(V^{\otimes r}).$$ 
\end{defn}
\noindent It turns out that the image of 
the representation 
$\psi_{n,r}\colon \hat{\mathbf U}_q(\hat{\mathfrak{gl}}_n)\to 
\mbox{End}(V^{\otimes r})$ 
is isomorphic to $\hat{\SD}(n,r)$. If $n>r$, then we can even restrict to 
${\mathbf U}_q(\hat{\mathfrak{sl}}_n)\subset 
\hat{\mathbf U}_q(\hat{\mathfrak{gl}}_n)$, i.e.  
$$\psi_{n,r}({\mathbf U}_q(\hat{\mathfrak{sl}}_n))\cong \hat{\SD}(n,r).$$
If $n=r$, this is no longer true, as we will show below.   
 
\subsubsection{Presentation of $\hat{\SD}(n,r)$ for $n>r$}
In this subsection, let $n>r$. As already mentioned, the map 
$$\psi_{n,r}\colon \Uglaff\to \mbox{End}(V^{\otimes r})\to \hat{\SD}(n,r)$$ 
is surjective. This observation gives rise to the following presentation of 
$\hat{\SD}(n,r)$. The proof can be found 
in~\cite{DG} (Theorem 2.6.1).
\begin{thm}\cite{DG}
\label{thm:presentationnd}
For $n>r$, the $\bQ(q)$-algebra $\hat{\SD}(n,r)$ is isomorphic to 
the associative unital $\bQ(q)$-algebra generated by $1_{\lambda}$ and $E_{\pm i}$, for $\lambda\in\Lambda(n,r)$ and $i=1,\ldots, n$,  
subject to the relations
\begin{gather}
\label{rel1}
1_{\lambda}1_{\mu} = \delta_{\lambda,\mu}1_{\lambda} \\
\label{rel2}
E_{\pm i}1_{\lambda} = 1_{\lambda\pm\alpha_i}E_{\pm i}\\
\label{rel3}
(E_iE_{-j} - E_{-j}E_i)1_{\lambda} = \delta_{i,j}[\lambda_i-\lambda_{i+1}]1_{\lambda}
\\
\label{rel4}
(E_{\pm i}^2E_{\pm j}-(q+q^{-1})E_{\pm i}E_{\pm j}E_{\pm i}+E_{\pm j}E_{\pm i}^2)1_{\lambda}=0
\qquad\text{if}\quad |i-j|=1\mod n, 
\\
\label{rel5}
(E_{\pm i}E_{\pm j}-E_{\pm j}E_{\pm i})1_{\lambda}=0\qquad\text{else}.
\end{gather} 

In all equations the subscripts $i,j$ have to be read modulo $n$, and 
the equations hold for any $\lambda\in\Lambda(n,r)$. If $\lambda\pm\alpha_i
\not\in\Lambda(n,r)$, the corresponding idempotent is zero by convention.   
\end{thm}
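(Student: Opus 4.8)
The plan is to introduce the presented algebra as an abstract object---call it $A$, the associative unital $\bQ(q)$-algebra on generators $1_{\lambda}$ (for $\lambda\in\Lambda(n,r)$) and $E_{\pm i}$ subject to \eqref{rel1}--\eqref{rel5}---and to build an explicit surjection $\Phi\colon A\to\hat{\SD}(n,r)$, whose injectivity I then establish by a spanning-set/dimension count. First I would verify that the images of the generators under $\psi_{n,r}$ satisfy \eqref{rel1}--\eqref{rel5}, which makes $\Phi$ a well-defined algebra homomorphism. The only relation demanding a genuine computation is \eqref{rel3}: on the $\lambda$-weight space of $V^{\otimes r}$ the defining relation of $\hat{\mathbf U}_q(\hat{\mathfrak{gl}}_n)$ gives
\[ E_iE_{-j}-E_{-j}E_i = \delta_{i,j}\,\frac{K_iK_{i+1}^{-1}-K_i^{-1}K_{i+1}}{q-q^{-1}}, \]
and for $\lambda\in\Lambda(n,r)$ the right-hand operator acts as the quantum integer $[\lambda_i-\lambda_{i+1}]$, which is precisely where finiteness of the weight lattice enters. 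Surjectivity of $\Phi$ is then immediate: since $n>r$ we already have $\psi_{n,r}(\Uaff)\cong\hat{\SD}(n,r)$, so the finitely many nonzero idempotents together with the $E_{\pm i}$ generate the target.

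The core of the argument is injectivity, for which I would exhibit a spanning set of $A$ of the correct size. Applying \eqref{rel3} repeatedly to commute every lowering generator past the idempotents to the right of every raising generator, and using the Serre relations \eqref{rel4}--\eqref{rel5} to normal-order each of the two resulting halves, I would reduce an arbitrary element of $1_{\mu}A1_{\lambda}$ to a $\bQ(q)$-linear combination of monomials $E_{\ii}1_{\lambda}$ in a fixed PBW-type normal form; this is the idempotented analogue of a triangular decomposition $A\cong A^-\otimes A^0\otimes A^+$. I would then compare these normal-form monomials with the known integral (BLM-type) basis of $\hat{\SD}(n,r)$, indexed by the appropriate set of periodic affine matrices with prescribed row and column sums. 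Because $\Phi$ is a surjection that carries this spanning set onto a basis, an exact cardinality match forces the spanning set itself to be a basis and $\Phi$ to be an isomorphism.

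The main obstacle is this spanning-and-counting step in the affine setting. The triangular decomposition and the normal-ordering via the Serre relations are substantially more delicate than in finite type $A$, because the index $i$ ranges over $\bZ/n\bZ$: the affine simple root $\alpha_n=\epsilon_n-\epsilon_1$ participates in the commutation and Serre relations on equal footing, and one must keep track of the loop direction. Proving that the resulting normal form is unambiguous and that the monomial count matches the affine Schur-algebra basis \emph{exactly}, rather than merely bounding it from above, is where essentially all the difficulty lies. Here the hypothesis $n>r$ is a real simplification, since every $\lambda\in\Lambda(n,r)$ satisfies $0\le\lambda_i\le r<n$; this constrains the relevant matrices and lets one import the combinatorics directly from the double-centralizer picture for $V^{\otimes r}$.
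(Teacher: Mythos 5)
First, a remark on the target: the paper does not prove Theorem~\ref{thm:presentationnd} at all. It is imported verbatim from Doty and Green (\cite{DG}, Theorem 2.6.1), and the surrounding text only records the surjection $\psi_{n,r}\colon \Uaff\to\hat{\SD}(n,r)$ and the convention $1_{*}=0$. So there is no in-paper argument to measure you against; the comparison has to be with the cited source, whose overall shape (a homomorphism out of the presented algebra, surjectivity, then injectivity via an explicit basis) is indeed the one you describe. Your first two steps are fine: the relation check for \eqref{rel3} is exactly the observation that $K_iK_{i+1}^{-1}$ acts by $q^{\lambda_i-\lambda_{i+1}}$ on the $\lambda$-weight space, and surjectivity follows from $\psi_{n,r}(\Uaff)\cong\hat{\SD}(n,r)$ for $n>r$.

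The genuine gap is the injectivity step, and it is not a gap of detail. You propose to normal-order an arbitrary element of $1_{\mu}A1_{\lambda}$ into a PBW-type family of monomials $E_{\ii}1_{\lambda}$ using \eqref{rel3}--\eqref{rel5}, invoking a triangular decomposition $A\cong A^-\otimes A^0\otimes A^+$, and then to match cardinalities with the BLM-type basis of $\hat{\SD}(n,r)$. In affine type $A$ this plan does not get off the ground as stated: the Dynkin diagram is a cycle, so there is no ordering of the simple roots with respect to which the Serre relations terminate in a canonical normal form, and ordered monomials in the Chevalley generators $E_i$, $i\in\bZ/n\bZ$, do not yield a PBW basis of ${\mathbf U}_q(\hat{\mathfrak{sl}}_n)^{+}$ (one needs real and imaginary root vectors). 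Moreover the triangular decomposition of the abstractly presented algebra $A$ is itself a statement that requires proof and is essentially equivalent to already knowing a basis, so it cannot be assumed. Finally, even granting a spanning set of each weight space $1_{\mu}A1_{\lambda}$, the assertion that its cardinality equals (rather than merely bounds from above) the number of periodic matrices indexing $1_{\mu}\hat{\SD}(n,r)1_{\lambda}$ is precisely the content of the theorem; you correctly identify this as ``where essentially all the difficulty lies,'' but no mechanism for carrying it out is offered. As it stands the proposal is a correct reduction of the theorem to its hard part, not a proof of it; to complete it one would have to follow \cite{DG} in constructing an explicit linearly independent family in $A$ mapping onto a known basis of $\hat{\SD}(n,r)$, which is where the hypothesis $n>r$ does its real work.
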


We can restrict $\psi_{n,r}$ even further and obtain a surjection 
$\psi_{n,r}\colon \Uaff\to \hat{\SD}(n,r)$, which can be given 
explicitly on the generators. 
For any $\lambda\in\mathbb{Z}^{n-1}$, we have 
$$
\psi_{n,r}(E_{\pm i}1_{\lambda})=E_{\pm i}1_{\phi_{n,r}(\lambda)},
$$
where $\phi_{n,r}\colon \mathbb{Z}^{n-1}\to \Lambda(n,r)\cup \{*\}$ is the map 
defined in~\eqref{eq:psi}. By convention, we put $1_{*}=0$. 

\subsubsection{Presentation of $\hat{\SD}(n,n)$}

A Drinfeld-Jimbo type presentation of $\hat{\SD}(n,n)$ is harder to get, 
because 
$$\psi_{n,n}({\mathbf U}_q(\hat{\mathfrak{sl}}_n))=
\psi_{n,n}({\mathbf U}_q(\hat{\mathfrak{gl}}_n))$$
is a proper subalgebra of $\hat{\SD}(n,n)$. 
Therefore Green~\cite{Gr} introduced $\hat{\mathbf U}_q(\hat{\mathfrak{gl}}_n))
$, which contains the new invertible element $R$, and proved 
that $\hat{\SD}(n,n)$ is a quotient of this extended algebra. 
As vector spaces, we get the following $\mathbb{Q}(q)$-linear isomorphism:
$$\hat{\SD}(n,n)\cong \psi_{n,n}({\mathbf U}_q(\hat{\mathfrak{sl}}_n))\oplus
\bigoplus_{t\ne 0}\mathbb{Q}[R^t,R^{-t}].$$ 
However, this is not an algebra isomorphism. In Theorem 5.3.5 
in~\cite{DDF} Deng, Du and Fu show which relations need to be added in 
order to get a presentation of the algebra $\hat{\SD}(n,n)$. Let us 
first recall a slightly different presentation obtained by 
adding two new elements, $E_{\pm\delta}$, instead of $R^{\pm 1}$. 
This presentation, also due to Deng, Du and Fu~\cite{DDF}, turns out to be 
easier to categorify. As in~\cite{MTh}, we write $1_n:=1_{(1^n)}$. Recall 
that the {\em divided powers} are defined by 
$$E_{\pm i}^{(a)}:=\dfrac{E_{\pm i}^{a}}{[a]!},\quad\text{for}\;i=1,\ldots,n.$$ 
\begin{thm}\cite{DDF}
\label{thm:Eextrarels}
The $\mathbb{Q}(q)$-algebra $\hat{\SD}(n,n)$ is generated by $E_{\pm\delta}$, $E_{\pm i}$ and 
$1_{\lambda}$, for $i=1,\ldots,n$ and $\lambda\in\Lambda(n,n)$, subject to the relations 
\ref{rel1} through \ref{rel5} together with  
\begin{enumerate}[i)]
\item $E_{\pm\delta}1_{\lambda}=1_{\lambda}E_{\pm\delta}=0$ for all $\lambda\ne (1^n)$;
\item $E_{\pm\delta}1_n=1_nE_{\pm\delta}$;
\item $E_{+\delta}E_{-\delta}1_n=E_{-\delta}E_{+\delta}1_n=1_n$;
\item $E_iE_{+\delta}1_n=E_i^{(2)}E_{i-1}\ldots E_1E_n\cdots E_{i+1}1_n$;
\item $1_nE_{+\delta}E_i=1_nE_{i-1}\ldots E_1E_n\cdots E_{i+1}E_i^{(2)}$;
\item $E_{-i}E_{+\delta}1_n=E_{i-1}\cdots E_1E_n\cdots E_{i+1}1_n$;
\item $1_nE_{+\delta}E_{-i}=1_nE_{i-1}\cdots E_1E_n\cdots E_{i+1}$;
\item $E_{-i}E_{-\delta}1_n=E_{-i}^{(2)}E_{-(i+1)}\cdots E_{-n}E_{-1}\cdots 
E_{-(i-1)}1_n$;
\item $1_nE_{-\delta}E_{-i}=1_nE_{-(i+1)}\cdots E_{-n}E_{-1}\cdots 
E_{-(i-1)}E_{-i}^{(2)}$;
\item $E_iE_{-\delta}1_n=E_{-(i+1)}\cdots E_{-n}E_{-1}\cdots E_{-(i-1)}1_n$;
\item $1_nE_{-\delta}E_i=1_nE_{-(i+1)}\cdots E_{-n}E_{-1}\cdots E_{-(i-1)}$, 
\end{enumerate}
for any $i=1,\ldots,n$.
\end{thm}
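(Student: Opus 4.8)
The plan is to deduce this presentation from the one Deng, Du and Fu already establish in terms of the rotation element $R^{\pm 1}$ (their Theorem 5.3.5 in \cite{DDF}), by showing that the two presentations define the same algebra. Write $A$ for the abstract $\bQ(q)$-algebra on the generators $E_{\pm\delta}, E_{\pm i}, 1_\lambda$ modulo the relations \eqref{rel1}--\eqref{rel5} and (i)--(xi), and write $A_R$ for the $R$-presented algebra, so that $A_R\cong\hat{\SD}(n,n)$ by \cite{DDF}. The Chevalley block generated by the $E_{\pm i}$ and $1_\lambda$ subject to \eqref{rel1}--\eqref{rel5} is common to both presentations and, by the same Doty--Green analysis that underlies Theorem~\ref{thm:presentationnd}, maps onto the subalgebra $\psi_{n,n}(\mathbf U_q(\hat{\mathfrak{sl}}_n))\subseteq\hat{\SD}(n,n)$; so the entire problem is to match the extra generators $E_{\pm\delta}$ against $R^{\pm1}$ on the weight space $1_n:=1_{(1^n)}$.

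I would build mutually inverse homomorphisms $A\rightleftarrows A_R$. For $A\to A_R$, send $E_{\pm i},1_\lambda$ to themselves and send $E_{+\delta}$ (resp. $E_{-\delta}$) to an explicit element of $A_R$ built from $R1_n$ (resp. $R^{-1}1_n$) corrected by the ``around-the-cycle'' products $E_{i-1}\cdots E_1E_n\cdots E_{i+1}$ that appear on the right-hand sides of (iv)--(xi); note that $R1_n=1_nR$ is forced since rotating $(1^n)$ fixes it, so relations (i)--(iii) reduce to $RR^{-1}=1$ on the $1_n$-block and hold immediately. For $A_R\to A$, send $R1_\lambda$, for each $\lambda\in\Lambda(n,n)$ with $\lambda\neq(1^n)$, to a product of Chevalley generators realizing the rotation on the $\lambda$-weight space, and send $R1_n\mapsto$ (the corrected) $E_{+\delta}$. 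The vector-space decomposition $\hat{\SD}(n,n)\cong\psi_{n,n}(\mathbf U_q(\hat{\mathfrak{sl}}_n))\oplus\bigoplus_{t\neq0}\bQ[R^t,R^{-t}]$ is what guarantees this is consistent: away from $1_n$ the rotation $R$ contributes nothing beyond $\psi_{n,n}(\mathbf U_q(\hat{\mathfrak{sl}}_n))$, so it is legitimately expressible in the $E_{\pm i}$, while the genuinely new summand $\bigoplus_{t\neq0}\bQ[R^t,R^{-t}]$ is supported on $1_n$ and is captured by the powers of $E_{\pm\delta}$.

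The verification that the relations hold is cleanest inside $\mathrm{End}(V^{\otimes n})$, into which $\hat{\SD}(n,n)$ embeds faithfully by its very definition. There $R$ acts diagonally by $e_t\mapsto e_{t+1}$, the $E_{\pm i}$ act by the raising/lowering rules of the tensor-space action, and $1_n$ projects onto the weight-$(1^n)$ subspace spanned by the $e_{t_1}\otimes\cdots\otimes e_{t_n}$ whose residues run once through $\{1,\dots,n\}$. On this subspace I would check directly that multiplying the endomorphism assigned to $E_{+\delta}$ by a single $E_i$ or $E_{-i}$ produces exactly the long product on the right-hand side of (iv)--(xi) (this is precisely what the correction term is engineered to do), while the Serre relations \eqref{rel4} and the commutations \eqref{rel5} are inherited from the $\mathfrak{sl}_n$-block. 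Since identities that hold on $V^{\otimes n}$ hold in $\hat{\SD}(n,n)$, this shows both maps are well defined and, being mutually inverse on generators, isomorphisms.

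\emph{The hard part} will be showing that relations (i)--(xi) are not merely valid but \emph{sufficient}, i.e. that the surjection $A\twoheadrightarrow\hat{\SD}(n,n)$ is injective. Concretely, I would prove that, modulo \eqref{rel1}--\eqref{rel5} and (i)--(xi), every element of $A$ reduces to a $\bQ(q)$-combination of monomials in the $E_{\pm i},1_\lambda$ together with the pure powers $E_{+\delta}^{t}1_n$ and $E_{-\delta}^{t}1_n$ for $t\geq1$, yielding a spanning set that matches the displayed decomposition of $\hat{\SD}(n,n)$ term by term. The delicate point is confluence: relations (iv)--(xi) only rewrite $E_{\pm\delta}$ against one Chevalley generator attached to $1_n$, so one must verify that the competing reductions of a word such as $E_iE_{+\delta}E_{-j}1_n$ agree, and that iterating these rewrites on higher powers $E_{\pm\delta}^{t}1_n$ terminates consistently rather than collapsing the complementary summand. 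Establishing this normal form is the crux of the argument; once it is in place, the count against $\psi_{n,n}(\mathbf U_q(\hat{\mathfrak{sl}}_n))\oplus\bigoplus_{t\neq0}\bQ[R^t,R^{-t}]$ forces injectivity and closes the proof.
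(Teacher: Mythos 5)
Your overall strategy --- reducing to Deng--Du--Fu's $R$-presentation (their Theorem 5.3.5) via an explicit dictionary between $E_{\pm\delta}$ and the rotation element on the $1_n$-block --- is exactly the route the paper takes. The paper's entire argument consists of recalling DDF's formulas (5.3.1.1)--(5.3.1.2), which express $R^{\mp 1}$ as $E_{\pm\delta}1_n$ plus a sum of long Chevalley products, observing that each such product is supported on a single idempotent $1_{(a_n,a_1,\ldots,a_{n-1})}$ which is never $1_n$ (since $a_i=0$), so that $R^{\mp 1}1_n=E_{\pm\delta}1_n$ exactly and the two generating sets are interchangeable, and then citing DDF's Proposition 5.3.3 and Corollary 5.3.4 for the equivalence of relations (i)--(xi) with the $R$-relations.

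Two concrete problems with your version. First, a convention slip: in the tensor-space action the $E_i$ lower indices while $R$ raises them, so $E_{+\delta}1_n$ must be matched with $R^{-1}1_n$, not with $R1_n$; with your assignment the direct verification of (iv)--(xi) on $V^{\otimes n}$ would fail, since the two sides would rotate in opposite directions. Second, and more seriously, the step you yourself flag as the crux --- that relations (i)--(xi) \emph{suffice}, i.e.\ that the surjection $A\twoheadrightarrow\hat{\SD}(n,n)$ is injective --- is left as an unexecuted normal-form/confluence argument. That step is precisely the content of the DDF results the paper leans on; without either carrying out the rewriting argument in full (including the confluence checks you mention and a dimension count against each weight block) or invoking DDF's Proposition 5.3.3 and Corollary 5.3.4, the proof is not complete. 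The parts you do carry out (faithfulness of $\hat{\SD}(n,n)$ on $V^{\otimes n}$, verification that the relations hold there, and the role of the decomposition $\psi_{n,n}({\mathbf U}_q(\hat{\mathfrak{sl}}_n))\oplus\bigoplus_{t\ne 0}\bQ[R^t,R^{-t}]$ in locating the genuinely new generators on the $1_n$-block) are sound and consistent with the paper's discussion.
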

To see that Theorem~\ref{thm:Eextrarels} really gives a presentation 
of $\hat{\SD}(n,n)$, recall that Deng, Du and Fu give the following definition 
in (5.3.1.1) and (5.3.1.2) in~\cite{DDF} (They use the notation 
$\rho$ where we use $R$): 
\begin{defn} Define 
$$R^{-1}:=E_{+\delta}1_n+\sum_{i=1}^n \sum_{\substack{(a_1,\ldots,a_n)\in\Lambda(n,n)\\a_i=0}}E_{i-1}^{(a_{i-1})}\cdots E_1^{(a_1)}E_{n}^{(a_n)}\cdots E_{i+1}^{(a_{i+1})}
1_{(a_n,a_1,\ldots,a_{n-1})}$$
and 
$$R:=E_{-\delta}1_n+\sum_{i=1}^n \sum_{\substack{(a_1,\ldots,a_n)\in\Lambda(n,n)\\a_i=0}}
E_{-(i-1)}^{(a_{i-1})}\cdots E_{-1}^{(a_1)}E_{-n}^{(a_n)}\cdots E_{-(i+1)}^{(a_{i+1})}
1_{(a_1,\ldots,a_n)}.$$
\end{defn}
\noindent Then note that 
$$E_{i-1}^{(a_{i-1})}\cdots E_1^{(a_1)}E_{n}^{(a_n)}\cdots E_{i+1}^{(a_{i+1})}
1_{(a_n,a_1,\ldots,a_{n-1})}=1_{(a_1,\ldots,a_n)}E_{i-1}^{(a_{i-1})}\cdots E_1^{(a_1)}E_{n}^{(a_n)}\cdots E_{i+1}^{(a_{i+1})}$$
and 
$$E_{i-1}^{(a_{i-1})}\cdots E_1^{(a_1)}E_{n}^{(a_n)}\cdots E_{i+1}^{(a_{i+1})}1_{\lambda}=0$$
for all $\lambda\ne (a_n,a_1,\ldots,a_{n-1})$. Likewise, we have
$$
E_{-(i-1)}^{(a_{i-1})}\cdots E_{-1}^{(a_1)}E_{-n}^{(a_n)}\cdots E_{-(i+1)}^{(a_{i+1})}
1_{(a_1,\ldots,a_n)}=1_{(a_n,a_1,\ldots,a_{n-1})}
E_{-(i-1)}^{(a_{i-1})}\cdots E_{-1}^{(a_1)}E_{-n}^{(a_n)}\cdots E_{-(i+1)}^{(a_{i+1})}
$$
and 
$$E_{-(i-1)}^{(a_{i-1})}\cdots E_{-1}^{(a_1)}E_{-n}^{(a_n)}\cdots E_{-(i+1)}^{(a_{i+1})}1_{\lambda}=0$$
for all $\lambda\ne(a_1,\ldots,a_n)$. These remarks show 
that Proposition 5.3.3 and Corollary 5.3.4 in~\cite{DDF} 
imply that the presentation of $\hat{\SD}(n,n)$ in Theorem 5.3.5 in that paper, 
is equivalent to the one we have given in Theorem~\ref{thm:Eextrarels}. 
In particular, the relations in Theorem~\ref{thm:Eextrarels} imply 
the following relations, which are exactly the ones in 
Theorem 5.3.5~\cite{DDF}: 
\begin{cor} In $\hat{\SD}(n,n)$, we have 
$$RR^{-1}=R^{-1}R=1,\quad RE_{\pm i}R^{-1}=E_{\pm(i+1)},\quad R1_{\lambda}R^{-1}=
1_{(\lambda_n,\lambda_1\ldots,\lambda_{n-1})}.$$
As usual, we read the indices modulo $n$. 
\end{cor}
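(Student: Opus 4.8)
The plan is to derive all three relations from the defining relations of Theorem~\ref{thm:Eextrarels} together with the Definition of $R$ and $R^{-1}$, testing everything against the idempotents $1_\lambda$, $\lambda\in\Lambda(n,n)$, since $\sum_{\lambda\in\Lambda(n,n)}1_\lambda$ is the unit of $\hat{\SD}(n,n)$. Write $\sigma(\lambda)=(\lambda_n,\lambda_1,\ldots,\lambda_{n-1})$ for the cyclic shift, and note $\sigma(\alpha_i)=\alpha_{i+1}$ modulo $n$. The first step is purely formal: by the commutation identities recalled just before the corollary, every divided-power summand of $R^{-1}$ (resp. $R$) carries a fixed right idempotent $1_{\sigma(a)}$ (resp. $1_a$) and a fixed left idempotent $1_a$ (resp. $1_{\sigma(a)}$), while the $E_{\pm\delta}1_n$ terms are fixed by $1_n$ on both sides via relations i)--ii). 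Summing over the definition, this yields at once $R1_\lambda=1_{\sigma(\lambda)}R$ and $R^{-1}1_\lambda=1_{\sigma^{-1}(\lambda)}R^{-1}$, so that the weight-rotation relation $R1_\lambda R^{-1}=1_{\sigma(\lambda)}$ follows as soon as $RR^{-1}=1$ is known.

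The main step, and the one I expect to be the principal obstacle, is the conjugation relation $RE_{\pm i}=E_{\pm(i+1)}R$. I would prove it by multiplying on the right by each $1_\lambda$ and comparing $RE_{\pm i}1_\lambda$ with $E_{\pm(i+1)}R1_\lambda=E_{\pm(i+1)}1_{\sigma(\lambda)}R$; both sides send the weight $\lambda$ to $\sigma(\lambda)\pm\alpha_{i+1}$, so it remains to match the underlying products. For weights $\lambda$ bounded away from $(1^n)$ only the divided-power summands of $R$ are active, and the asserted equality reduces to the statement that inserting a factor $E_{\pm i}$ before the cyclically ordered product and inserting $E_{\pm(i+1)}$ after it give the same element; this is a consequence of the Serre relations~\eqref{rel4},~\eqref{rel5} and~\eqref{rel3}, since the negative products defining $R$ arise from the positive products defining $R^{-1}$ by shifting all indices by one. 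The delicate cases are the weights adjacent to $(1^n)$, where the $E_{\pm\delta}1_n$ terms intervene: there the required identities are exactly relations iv)--xi) of Theorem~\ref{thm:Eextrarels}, which were designed to express $E_{\pm i}E_{\pm\delta}1_n$ and $1_nE_{\pm\delta}E_{\pm i}$ as the relevant divided-power products. The technical heart is to reconcile the divided powers appearing there with the quantum-integer coefficients $[\lambda_i-\lambda_{i+1}]$ produced by~\eqref{rel3} when an $E_i$ is commuted past an $E_{-i}$.

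It remains to prove invertibility $RR^{-1}=R^{-1}R=1$. First I would compute on the anchor weight $(1^n)$: in $RR^{-1}1_n$ and $R^{-1}R1_n$ every divided-power cross term vanishes, because such a term factors through some $1_{(a_1,\ldots,a_n)}$ with $a_i=0$, whereas relations i)--ii) force the surviving contribution to factor through $1_n$; only $E_{-\delta}E_{+\delta}1_n$ and $E_{+\delta}E_{-\delta}1_n$ survive, and relation iii) gives $RR^{-1}1_n=R^{-1}R1_n=1_n$. To reach an arbitrary weight I would use the conjugation relation of the previous paragraph: it shows that $RR^{-1}$ and $R^{-1}R$ commute with every $E_{\pm i}$ and, by the first paragraph, with every $1_\lambda$, hence are central in $\hat{\SD}(n,n)$. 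Since the weights of $\Lambda(n,n)$ are connected under the operators $E_{\pm i}$ and the central element is already pinned to be $1_n$ on the weight $(1^n)$, centrality propagates $RR^{-1}1_\lambda=R^{-1}R1_\lambda=1_\lambda$ to all $\lambda$; alternatively one checks directly that the explicit double sums defining $RR^{-1}1_\lambda$ telescope to $1_\lambda$ using~\eqref{rel3}--\eqref{rel5}. Combining the three paragraphs gives the corollary.

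Alternatively, one could bypass the combinatorics by invoking that $\hat{\SD}(n,n)=\End_{\hat{\He}_{\hat{A}_{n-1}}}(V^{\otimes n})$ acts faithfully on tensor space: it then suffices to verify that the elements $R$ and $R^{-1}$ of the Definition act on $V^{\otimes n}$ as the simultaneous index shifts $e_{t_1}\otimes\cdots\otimes e_{t_n}\mapsto e_{t_1\mp 1}\otimes\cdots\otimes e_{t_n\mp 1}$, after which the defining relations $RR^{-1}=R^{-1}R=1$ and $RE_{\pm i}R^{-1}=E_{\pm(i+1)}$ of $\hat{\mathbf U}_q(\hat{\mathfrak{gl}}_n)$ make the three identities automatic; but this still requires the same weight-by-weight analysis to identify the action of the divided-power sums.
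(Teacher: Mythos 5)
Your route is genuinely different from the paper's. The paper does not compute these relations from scratch: the displayed idempotent identities just before the corollary (essentially your first paragraph) are combined with a citation of Proposition 5.3.3 and Corollary 5.3.4 of~\cite{DDF}, which is where the conjugation formula $RE_{\pm i}R^{-1}=E_{\pm(i+1)}$ and the invertibility of $R$ are actually established; the corollary is then read off from the equivalence of the two presentations. Your first paragraph (the weight-rotation identity $R1_\lambda=1_{\sigma(\lambda)}R$ via the idempotent bookkeeping) and the anchor computation $RR^{-1}1_n=E_{-\delta}E_{+\delta}1_n=1_n$ in your third paragraph are correct and agree with the paper's remarks.

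As a self-contained proof, however, your proposal has a genuine gap exactly where you flag ``the technical heart''. The claim that, away from $(1^n)$, $RE_{\pm i}1_\lambda=E_{\pm(i+1)}R1_\lambda$ ``is a consequence of the Serre relations'' is an assertion, not an argument: one must actually commute $E_{\pm i}$ through the cyclically ordered products of divided powers, keep track of which intermediate weights leave $\Lambda(n,n)$, and reconcile the divided-power coefficients with the quantum integers produced by~\eqref{rel3}. That computation is precisely the content of Proposition 5.3.3 in~\cite{DDF}, and it is the one step neither you nor the paper carries out on the page; deferring it means the conjugation relation is not proved. A second, smaller gap: propagating $RR^{-1}1_\lambda=1_\lambda$ from $\lambda=(1^n)$ by centrality requires knowing $1_\lambda\in\hat{\SD}(n,n)\,1_n\,\hat{\SD}(n,n)$ for every $\lambda\in\Lambda(n,n)$ (for instance $[2]\,1_{(2,0,1^{n-2})}=E_1 1_n E_{-1}1_{(2,0,1^{n-2})}$, and an induction on $\lambda$); this is true but must be argued. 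Your alternative via the faithful action on $V^{\otimes n}$ would yield a complete argument, but, as you yourself note, it again reduces to identifying the action of the divided-power sums, i.e.\ to the same computation you left open.
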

Therefore, the surjective algebra homomorphism
$$\psi_{n,n}\colon \Uglaffext\to \hat{\SD}(n,n)$$
can be defined as
$$
\psi_{n,n}(1_{\lambda})=
\begin{cases}
1_{\lambda}&\quad\text{if}\;\lambda\in\Lambda(n,n)\\
0&\quad\text{else}
\end{cases}
$$
and
$$
\psi_{n,n}(E_{\pm i}1_{\lambda})=E_{\pm i}\psi_{n,n}(1_{\lambda}),\quad \psi_{n,n}(R^{\pm 1}1_{\lambda})=R^{\pm 1}\psi_{n,n}(1_{\lambda}).
$$

In Lemma 3.2 and Corollary 5.6 in~\cite{DD} Deng and Du also show that 
there exists an embedding 
$$\iota_n\colon \hat{\SD}(n,n)\to \hat{\SD}(n+1,n),$$   
which gives an isomorphism of algebras 
$$\hat{\SD}(n,n)\cong \bigoplus_{\lambda,\mu\in\Lambda(n,n)}1_{(\lambda,0)}
\hat{\SD}(n+1,n)1_{(\mu,0)}.$$
At that point of their paper they use a different presentation of 
the affine $q$--Schur algebras, but by Proposition 7.1~\cite{DD} it is not 
hard to work out the image under $\iota_n$ of the 
generators of $\hat{\SD}(n,n)$ in Theorem~\ref{thm:Eextrarels}. 
Note that we have multiplied their images of $E_{+n}$ and $E_{-n}$ by $-1$, 
which is more convenient for categorification and does not invalidate 
their results. 
\begin{prop}\cite{DD}
\label{prop:iota}
The $\mathbb{Q}(q)$-linear algebra homomorphism 
$$\iota_{n}\colon \hat{\SD}(n,n)\to \hat{\SD}(n+1,n)$$ 
defined by
\begin{eqnarray*}
1_{\lambda}&\mapsto& 1_{(\lambda,0)}\\
E_{\pm i}1_{\lambda}&\mapsto& E_{\pm i}1_{(\lambda,0)}\\
E_n1_{\lambda}&\mapsto& E_nE_{n+1}1_{(\lambda,0)}\\
E_{-n}1_{\lambda}&\mapsto& E_{-(n+1)}E_{-n}1_{(\lambda,0)}\\
E_{+\delta}1_n&\mapsto& E_nE_{n-1}\cdots E_1E_{n+1}1_{(1^n,0)}\\
E_{-\delta}1_n&\mapsto& E_{-(n+1)}E_{-1}\cdots E_{-n}1_{(1^n,0)}
\end{eqnarray*}  
for any $1\leq i\leq n-1$ and $\lambda\in\Lambda(n,n)$,  
is an embedding and gives an isomorphism of algebras
$$\hat{\SD}(n,n)\cong \bigoplus_{\lambda,\mu\in\Lambda(n,n)}1_{(\lambda,0)}\hat{\SD}(n+1,n)1_{(\mu,0)}.$$
\end{prop}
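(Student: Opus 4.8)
The plan is to take DD's embedding and corner isomorphism as given --- these are Lemma 3.2 and Corollary 5.6 in~\cite{DD} --- and to identify the images of \emph{our} generators under it, rather than to reconstruct the isomorphism from scratch. Since DD work with a different presentation, the task reduces to rewriting each generator of $\hat{\SD}(n,n)$ appearing in Theorem~\ref{thm:Eextrarels} in terms of the standard generators of $\hat{\SD}(n+1,n)$, using the dictionary furnished by Proposition 7.1 in~\cite{DD}. The announced rescaling of $E_{\pm n}$ by $-1$ is harmless: the defining relations of Theorem~\ref{thm:Eextrarels} are invariant under the substitution $E_{\pm n}\mapsto -E_{\pm n}$ (relation~\ref{rel3} is bilinear in the pair so the two signs cancel, and the Serre relations~\ref{rel4}, \ref{rel5} are unaffected by a sign on a single generator), so this substitution is an automorphism and pre-composing DD's map with it changes nothing essential.

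First I would dispose of the generators that do not touch the final coordinate. The weight inclusion $\Lambda(n,n)\hookrightarrow\Lambda(n+1,n)$, $\lambda\mapsto(\lambda,0)$, is precisely the indexing of the corner, so $1_{\lambda}\mapsto 1_{(\lambda,0)}$ is forced. For $1\le i\le n-1$ the generators $E_{\pm i}$ move only the coordinates $i,i+1\le n$; they therefore preserve the condition ``last coordinate $=0$'' and must map to the corresponding $E_{\pm i}1_{(\lambda,0)}$. These images satisfy relations~\ref{rel1}--\ref{rel5} automatically, giving the first line of the proposition.

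The substantive identifications are those of the affine node $E_{\pm n}$ and of the extended generators $E_{\pm\delta}$. In $\hat{\SD}(n,n)$ the index $n$ is the cyclic node of $\hat{\mathfrak{sl}}_n$; passing to $\hat{\mathfrak{sl}}_{n+1}$ and restricting to weights with last coordinate $0$ splits this node into the pair $n,n+1$, and Proposition 7.1 in~\cite{DD} yields $E_n1_{\lambda}\mapsto E_nE_{n+1}1_{(\lambda,0)}$ and $E_{-n}1_{\lambda}\mapsto E_{-(n+1)}E_{-n}1_{(\lambda,0)}$. For $E_{\pm\delta}$ I would use the definition of $R^{\pm1}$ given just after Theorem~\ref{thm:Eextrarels}: because the large sum there carries right-idempotents $1_{(a_n,a_1,\ldots,a_{n-1})}$ subject to some $a_i=0$, it never produces $1_{(1^n)}$, whence $R^{-1}1_n=E_{+\delta}1_n$ and $R1_n=E_{-\delta}1_n$. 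Under the embedding the rotation $R$ becomes a single traversal of the $\hat{\mathfrak{sl}}_{n+1}$ cycle: a direct weight computation shows that $E_nE_{n-1}\cdots E_1E_{n+1}1_{(1^n,0)}$ has total weight change $\sum_{i=1}^{n+1}\alpha_i=0$, so it lies in the corner $1_{(1^n,0)}\hat{\SD}(n+1,n)1_{(1^n,0)}$, and this loop element is exactly $\iota_n(E_{+\delta}1_n)$; symmetrically $E_{-\delta}1_n\mapsto E_{-(n+1)}E_{-1}\cdots E_{-n}1_{(1^n,0)}$.

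Finally one must confirm these images satisfy the defining relations of Theorem~\ref{thm:Eextrarels}. Since DD already establish that their map is an algebra homomorphism and an isomorphism onto the corner, once the generators are matched correctly via Proposition 7.1 the relations hold automatically, and what remains is the bookkeeping check that our rewriting reproduces their map. The step I expect to be the main obstacle is the extended-generator relation~iii), $E_{+\delta}E_{-\delta}1_n=E_{-\delta}E_{+\delta}1_n=1_n$: it encodes $R^{-1}R=RR^{-1}=1$, and in $\hat{\SD}(n+1,n)$ it asserts that the forward and backward loops around the affine Dynkin diagram cancel on the weight $(1^n,0)$. Verifying this by hand means composing the two loop products and repeatedly applying the commutation relation~\ref{rel3} together with the Serre relations as the strands are slid past one another; this is the one genuinely computational point, while everything else is weight-tracking.
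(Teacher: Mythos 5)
Your overall route coincides with the paper's: Proposition~\ref{prop:iota} is not proved in the paper but imported from~\cite{DD} (Lemma 3.2, Corollary 5.6 and Proposition 7.1 there), the only new content being the translation of the generators of Theorem~\ref{thm:Eextrarels} through DD's dictionary and the remark that rescaling $E_{\pm n}$ by $-1$ is harmless. Your weight-tracking for $1_{\lambda}$ and $E_{\pm i}$ with $i\le n-1$, the splitting of the affine node into the pair $n,n+1$, and the identification $R^{\mp 1}1_n=E_{\pm\delta}1_n$ (because the big sums in the definition of $R^{\pm1}$ kill $1_{(1^n)}$) are all correct and are exactly the bookkeeping the paper alludes to.

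The one step that fails as written is your justification of the sign change. The substitution $E_{\pm n}\mapsto -E_{\pm n}$, fixing $E_{\pm\delta}$ and all other generators, is \emph{not} an automorphism of the presentation in Theorem~\ref{thm:Eextrarels}: you verified only relations \ref{rel1}--\ref{rel5}, but the extended relations iv)--xi) are not preserved. For instance, iv) with $i=1$ reads $E_1E_{+\delta}1_n=E_1^{(2)}E_n\cdots E_21_n$; the right-hand side contains exactly one factor $E_n$ while the left-hand side contains none, so negating $E_n$ alone flips the sign of one side only. The repair is immediate but necessary: one must simultaneously send $E_{\pm\delta}\mapsto -E_{\pm\delta}$. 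With that convention the total number of factors from $\{E_n,E_{-n},E_{+\delta},E_{-\delta}\}$ has the same parity on both sides of every relation i)--xi) (e.g.\ in iv) with $i=n$ both sides are of even parity, for $i\ne n$ both are odd), so the extended substitution is an automorphism, and precomposing DD's embedding with it produces exactly the images listed in the proposition. You should state the check for relations i)--xi) explicitly rather than only for \ref{rel1}--\ref{rel5}; otherwise the claim that ``pre-composing changes nothing essential'' rests on a false premise.
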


\section{A diagrammatic categorification of $\hat{\SD}(n,n)$}

\begin{defn}
\label{defn:Scat}
The $2$-category $\hat{\mathcal{S}}(n,n)$ is defined as the quotient of 
$\mathcal{U}(\hat{\mathfrak{gl}}_n)$ by the ideal generated by all diagrams 
with regions whose labels are not contained in $\Lambda(n,n)$, 
just as in~\cite{MTh} (taking $y=0$ in that paper), 
together with the generating $1$-morphisms
$${\mathbf 1}_n\mathcal{E}_{+\delta}{\mathbf 1}_n\{t\}\quad\text{and}\quad 
{\mathbf 1}_n\mathcal{E}_{-\delta}{\mathbf 1}_n\{t\},$$
for $t\in\mathbb{Z}$, and the following generating $2$-morphisms 
\[
\begin{array}{ccc}
  1_{\cal{E}_{+\delta} {\mathbf 1}_n\{t\}} &\quad  & 1_{\cal{E}_{-\delta} {\mathbf 1}_n\{t\}} \\ \\
   \xy
 (0,0)*{\black\xybox{(0,8);(0,-8); **\dir{-} ?(.5)*\dir{>}+(2.3,0)*{\scriptstyle{}};}};
 (-1,-11)*{\scriptstyle\delta};(-1,11)*{\scriptstyle \delta};
 (6,2)*{ (1^n)};
 (-8,2)*{ (1^n)};
 (-10,0)*{};(10,0)*{};
 \endxy
 & &
 \;\;   
   \xy
 (0,0)*{\black\xybox{(0,8);(0,-8); **\dir{-} ?(.5)*\dir{<}+(2.3,0)*{\scriptstyle{}};}};
 (-1,-11)*{\scriptstyle\delta};(-1,11)*{\scriptstyle\delta};
 (6,2)*{ (1^n)};
 (-8.5,2)*{ (1^n)};
 (-12,0)*{};(12,0)*{};
 \endxy
\\ \\
   \;\;\text{ {\rm deg} 0}\;\;
 & &\;\;\text{ {\rm deg} 0}\;\;
\end{array}
\]

\[
\begin{tabular}{|l|c|c|c|c|}
\hline
  {\bf Notation:} \xy (0,-5)*{};(0,7)*{}; \endxy
 & \text{$\Ucupr_{\delta}$} 
 & \text{$\Ucupl_{\delta}$} 
 & \text{$\Ucapl_{\delta}$} 
 & \text{$\Ucapr_{\delta}$} \\
 \hline
  {\bf 2-morphism:} & \xy
    (0,-3)*{\black\bbpef{\black \delta}};
    (8,-3)*{(1^n)};
    (-12,0)*{};(12,0)*{};
    \endxy
  & \xy
    (0,-3)*{\black\bbpfe{\black \delta}};
    (8,-3)*{(1^n)};
    (-12,0)*{};(12,0)*{};
    \endxy
  & \xy
    (0,0)*{\black\bbcef{\black \delta}};
    (8,3)*{(1^n)};
    (-12,0)*{};(12,0)*{};
    \endxy 
  & \xy
    (0,0)*{\black\bbcfe{\black \delta}};
    (8,3)*{(1^n)};
    (-12,0)*{};(12,0)*{};(8,8)*{};
    \endxy\\& & &  &\\ \hline
 {\bf Degree:} & \;\;\text{  $0$}\;\;
 & \;\;\text{ $0$}\;\;
 & \;\;\text{ $0$}\;\;
 & \;\;\text{  $0$}\;\;
 \\
 \hline
\end{tabular}
\]

\[
\begin{tabular}{|l|c|c|c|c|}
\hline
  \hbox{\bf Notation:} \xy (0,-5)*{};(0,7)*{}; \endxy
 & \text{$\DeltaEE_{\delta,{\blue i}}$} 
 & \text{$\DeltaFF_{\delta,{\blue i}}$}
 & \text{$\EEDelta^{\delta,{\blue i}}$}
 & \text{$\FFDelta^{\delta,{\blue i}}$}
\\ 
  \hline
  \hbox{\bf 2-morphism:} \xy (0,-5)*{};(0,7)*{};\endxy & 
\xy
(-10,17)*{};
(-12,14)*{\scriptstyle\blue i};
(-8,14)*{\scriptstyle\blue i-1};
(-2,14)*{\scriptstyle\blue 1};
(2,14)*{\scriptstyle\blue n};
(7,14)*{\scriptstyle\blue i+2};
(13,14)*{\scriptstyle\blue i+1};
(0,-15)*{\scriptstyle \delta};
(7,0)*{(1^n)};
(0,0)*{\figs{0.27}{DeltaEE}};
\endxy
  & 
\xy
(-10,17)*{};
(-12,14)*{\scriptstyle\blue i};
(-8,14)*{\scriptstyle\blue i+1};
(-2,14)*{\scriptstyle\blue n};
(2,14)*{\scriptstyle\blue 1};
(7,14)*{\scriptstyle\blue i-2};
(13,14)*{\scriptstyle\blue i-1};
(0,-15)*{\scriptstyle\delta};
(7,0)*{(1^n)};
(0,0)*{\figs{0.27}{DeltaFF}};
\endxy
&
\xy
(-10,17)*{};
(-12,-14)*{\scriptstyle\blue i};
(-8,-14)*{\scriptstyle\blue i-1};
(-2,-14)*{\scriptstyle\blue 1};
(2,-14)*{\scriptstyle\blue n};
(7,-14)*{\scriptstyle\blue i+2};
(13,-14)*{\scriptstyle\blue i+1};
(0,15)*{\scriptstyle\delta};
(7,0)*{(1^n)};
(0,0)*{\figs{0.27}{EEDelta}};
\endxy
&
\xy
(-10,17)*{};
(-12,-14)*{\scriptstyle\blue i};
(-8,-14)*{\scriptstyle\blue i+1};
(-2,-14)*{\scriptstyle\blue n};
(2,-14)*{\scriptstyle\blue 1};
(7,-14)*{\scriptstyle\blue i-2};
(13,-14)*{\scriptstyle\blue i-1};
(0,15)*{\scriptstyle\delta};
(7,0)*{(1^n)};
(0,0)*{\figs{0.27}{FFDelta}};
\endxy
   \\& & & & \\ \hline
 {\bf Degree:} 
& 
\;\;\text{$1$}\;\;
& 
\;\;\text{$1$}\;\;
& 
\;\;\text{$1$}\;\;
& 
\;\;\text{$1$}\;\;
\\
 \hline
\end{tabular}
\]
Relations:\\ \\
$\cal{E}_{+\delta}1_n$ and $\cal{E}_{-\delta}{\mathbf 1}_n$ are biadjoint inverses of 
eachother:
\begin{equation} 
\label{eq_biadjoint1}
\text{$
  \xy   0;/r.18pc/:
    (0,0)*{\black\xybox{
    (-8,0)*{}="1";
    (0,0)*{}="2";
    (8,0)*{}="3";
    (-8,-10);"1" **\dir{-};
    "1";"2" **\crv{(-8,8) & (0,8)} ?(0)*\dir{>} ?(1)*\dir{>};
    "2";"3" **\crv{(0,-8) & (8,-8)}?(1)*\dir{>};
    "3"; (8,10) **\dir{-};}};
    (12,-9)*{(1^n)};
    (-6,9)*{(1^n)};
    (-7,-13)*{\scriptstyle\delta};
    \endxy
    \; =
    \;
\xy   0;/r.18pc/:
    (0,0)*{\black\xybox{
    (-8,0)*{}="1";
    (0,0)*{}="2";
    (8,0)*{}="3";
    (0,-10);(0,10)**\dir{-} ?(.5)*\dir{>};}};
    (7,8)*{(1^n)};
    (-9,8)*{(1^n)};
    (0,-13)*{\scriptstyle\delta};
    \endxy
\qquad \quad  \xy   0;/r.18pc/:
    (0,0)*{\black\xybox{
    (-8,0)*{}="1";
    (0,0)*{}="2";
    (8,0)*{}="3";
    (-8,-10);"1" **\dir{-};
    "1";"2" **\crv{(-8,8) & (0,8)} ?(0)*\dir{<} ?(1)*\dir{<};
    "2";"3" **\crv{(0,-8) & (8,-8)}?(1)*\dir{<};
    "3"; (8,10) **\dir{-};}};
    (12,-9)*{(1^n)};
    (-6,9)*{(1^n)};
    (-7,-13)*{\scriptstyle\delta};
    \endxy
    \; =
    \;
\xy   0;/r.18pc/:
    (0,0)*{\black\xybox{
    (-8,0)*{}="1";
    (0,0)*{}="2";
    (8,0)*{}="3";
    (0,-10);(0,10)**\dir{-} ?(.5)*\dir{<};}};
    (9,8)*{(1^n)};
    (-6,8)*{(1^n)};
    (0,-13)*{\scriptstyle\delta};
    \endxy
$}
\end{equation}

\begin{equation}
\label{eq_biadjoint2}
\text{$
 \xy   0;/r.18pc/:
    (0,0)*{\black\xybox{
    (8,0)*{}="1";
    (0,0)*{}="2";
    (-8,0)*{}="3";
    (8,-10);"1" **\dir{-};
    "1";"2" **\crv{(8,8) & (0,8)} ?(0)*\dir{>} ?(1)*\dir{>};
    "2";"3" **\crv{(0,-8) & (-8,-8)}?(1)*\dir{>};
    "3"; (-8,10) **\dir{-};}};
    (12,9)*{(1^n)};
    (-5,-10)*{(1^n)};
    (8,-13)*{\scriptstyle\delta};
    \endxy
    \; =
    \;
      \xy 0;/r.18pc/:
    (0,0)*{\black\xybox{
    (8,0)*{}="1";
    (0,0)*{}="2";
    (-8,0)*{}="3";
    (0,-10);(0,10)**\dir{-} ?(.5)*\dir{>};}};
    (7,0)*{(1^n)};
    (-9,0)*{(1^n)};
    (0,-13)*{\scriptstyle\delta};
    \endxy
\qquad \quad \xy  0;/r.18pc/:
    (0,0)*{\black\xybox{
    (8,0)*{}="1";
    (0,0)*{}="2";
    (-8,0)*{}="3";
    (8,-10);"1" **\dir{-};
    "1";"2" **\crv{(8,8) & (0,8)} ?(0)*\dir{<} ?(1)*\dir{<};
    "2";"3" **\crv{(0,-8) & (-8,-8)}?(1)*\dir{<};
    "3"; (-8,10) **\dir{-};}};
    (14,9)*{(1^n)};
    (-5,-10)*{(1^n)};
    (8,-13)*{\scriptstyle\delta};
    \endxy
    \; =
    \;
\xy  0;/r.18pc/:
    (0,0)*{\black\xybox{
    (8,0)*{}="1";
    (0,0)*{}="2";
    (-8,0)*{}="3";
    (0,-10);(0,10)**\dir{-} ?(.5)*\dir{<};}};
    (9,0)*{(1^n)};
    (-7,0)*{(1^n)};
    (0,-13)*{\scriptstyle\delta};
    \endxy
$}
\end{equation}

\begin{equation} 
\label{deltabubbles}
 \xy
 (-12,0)*{\black\ncbub_{\delta}};
 (-4,5)*{(1^n)};
 \endxy
  = 
\xy
 (-12,0)*{\black\nccbub_{\delta}};
 (-4,5)*{(1^n)};
 \endxy = 1
\end{equation}

\begin{equation}
\label{deltainvert2}
\xy
    (0,3)*{\black\bbpef{\black \delta}};
    (8,0)*{(1^n)};
    (-12,0)*{};(12,0)*{};
    (0,-3)*{\black\bbcef{}};
    (-12,0)*{};(12,0)*{}; 
\endxy
=
\xy   
    (0,0)*{\black\xybox{
    (0,-6.5);(0,6.5)**\dir{-} ?(.5)*\dir{<};}};
    (0,-9.5)*{\scriptstyle\delta};
    (9,0)*{\black\xybox{
    (0,-6.5);(0,6.5)**\dir{-} ?(.5)*\dir{>};}};
    (15,0)*{(1^n)};
    (9,-9.5)*{\scriptstyle\delta};
    \endxy
\qquad
\xy
    (0,3)*{\black\bbpfe{\black \delta}};
    (8,0)*{(1^n)};
    (-12,0)*{};(12,0)*{};
    (0,-3)*{\black\bbcfe{}};
    (-12,0)*{};(12,0)*{};(8,8)*{};
    \endxy
=
\xy   
    (0,0)*{\black\xybox{
    (0,-6.5);(0,6.5)**\dir{-} ?(.5)*\dir{>};}};
    (0,-9.5)*{\scriptstyle\delta};
    (9,0)*{\black\xybox{
    (0,-6.5);(0,6.5)**\dir{-} ?(.5)*\dir{<};}};
    (15,0)*{(1^n)};
    (9,-9.5)*{\scriptstyle\delta};
    \endxy
\end{equation}

We impose full cyclicity w.r.t. $\DeltaEE_{\delta,\blue i}, \DeltaFF_{\delta,\blue i}, 
\EEDelta^{\delta,\blue i-1}$ and $\FFDelta^{\delta,\blue i+1}$, e.g. 
by using the adequate cups 
and caps we can rotate $\DeltaEE_{\delta,\blue i}$ to obtain $\FFDelta^{\delta,
\blue i+1}$.

Furthermore, we impose the relations:
\begin{equation}
\label{DeltaEEX}
\begin{array}{ccc}
\xy
(0,0)*{\figs{0.25}{DeltaEEX2}};
(12,0)*{(1^n)};
(-9,14)*{\scriptstyle\blue i+1};
(-5,14)*{\scriptstyle\blue i};
(0,14)*{\scriptstyle\blue 1};
(4,14)*{\scriptstyle\blue n};
(12,14)*{\scriptstyle\blue i+2};
(2,-15)*{\scriptstyle\delta};
\endxy
&=&
\xy
(-10,17)*{};
(-8,9)*{\blue\bullet};
(-12,14)*{\scriptstyle\blue i+1};
(-8,14)*{\scriptstyle\blue i};
(-2,14)*{\scriptstyle\blue 1};
(2,14)*{\scriptstyle\blue n};
(7,14)*{\scriptstyle\blue i+3};
(13,14)*{\scriptstyle\blue i+2};
(0,-15)*{\scriptstyle\delta};
(7,0)*{(1^n)};
(0,0)*{\figs{0.27}{DeltaEE}};
\endxy
-
\xy
(-10,17)*{};
(-6,9)*{\blue\bullet};
(-12,14)*{\scriptstyle\blue i+1};
(-8,14)*{\scriptstyle\blue i};
(-2,14)*{\scriptstyle\blue 1};
(2,14)*{\scriptstyle\blue n};
(7,14)*{\scriptstyle\blue i+3};
(13,14)*{\scriptstyle\blue i+2};
(0,-15)*{\scriptstyle\delta};
(7,0)*{(1^n)};
(0,0)*{\figs{0.27}{DeltaEE}};
\endxy
\end{array}
\end{equation}

\begin{equation}
\label{DeltaEERX}
\begin{array}{ccc}
\xy
(0,0)*{\figs{0.25}{DeltaEERX2}};
(12,0)*{(1^n)};
(-11,14)*{\scriptstyle\blue i-1};
(-4,14)*{\scriptstyle\blue 1};
(0,14)*{\scriptstyle\blue n};
(5,14)*{\scriptstyle\blue i+1};
(9,14)*{\scriptstyle\blue i};
(-2,-15)*{\scriptstyle\delta};
\endxy
&=&
\xy
(-10,17)*{};
(5.5,9)*{\blue\bullet};
(-12,14)*{\scriptstyle\blue i-1};
(-6.5,14)*{\scriptstyle\blue i-2};
(-2,14)*{\scriptstyle\blue 1};
(2,14)*{\scriptstyle\blue n};
(7,14)*{\scriptstyle\blue i+1};
(11,14)*{\scriptstyle\blue i};
(0,-15)*{\scriptstyle\delta};
(7,0)*{(1^n)};
(0,0)*{\figs{0.27}{DeltaEE}};
\endxy
\ -
\xy
(-10,17)*{};
(8,9)*{\blue\bullet};
(-12,14)*{\scriptstyle\blue i-1};
(-6.5,14)*{\scriptstyle\blue i-2};
(-2,14)*{\scriptstyle\blue 1};
(2,14)*{\scriptstyle\blue n};
(7,14)*{\scriptstyle\blue i+1};
(11,14)*{\scriptstyle\blue i};
(0,-15)*{\scriptstyle\delta};
(7,0)*{(1^n)};
(0,0)*{\figs{0.27}{DeltaEE}};
\endxy
\end{array}
\end{equation}

\begin{equation}
\label{LXEEDelta}
\begin{array}{ccc}
\xy
(0,0)*{\figs{0.25}{LXEEDelta2}};
(12,0)*{(1^n)};
(-9,-14)*{\scriptstyle\blue i+1};
(-5,-14)*{\scriptstyle\blue i};
(0,-14)*{\scriptstyle\blue 1};
(4,-14)*{\scriptstyle\blue n};
(12,-14)*{\scriptstyle\blue i+2};
(2,15)*{\scriptstyle\delta};
\endxy
&=&
\xy
(-10,17)*{};
(-8,-9)*{\blue\bullet};
(-12,-14)*{\scriptstyle\blue i+1};
(-8,-14)*{\scriptstyle\blue i};
(-2,-14)*{\scriptstyle\blue 1};
(2,-14)*{\scriptstyle\blue n};
(7,-14)*{\scriptstyle\blue i+3};
(13,-14)*{\scriptstyle\blue i+2};
(0,15)*{\scriptstyle\delta};
(7,0)*{(1^n)};
(0,0)*{\figs{0.27}{EEDelta}};
\endxy
-
\xy
(-10,17)*{};
(-6,-9)*{\blue\bullet};
(-12,-14)*{\scriptstyle\blue i+1};
(-8,-14)*{\scriptstyle\blue i};
(-2,-14)*{\scriptstyle\blue 1};
(2,-14)*{\scriptstyle\blue n};
(7,-14)*{\scriptstyle\blue i+3};
(13,-14)*{\scriptstyle\blue i+2};
(0,15)*{\scriptstyle\delta};
(7,0)*{(1^n)};
(0,0)*{\figs{0.27}{EEDelta}};
\endxy
\end{array}
\end{equation}

\begin{equation}
\label{RXEEDelta}
\begin{array}{ccc}
\xy
(0,0)*{\figs{0.25}{RXEEDelta2}};
(12,0)*{(1^n)};
(-9.5,-14)*{\scriptstyle\blue i-1};
(-3,-14)*{\scriptstyle\blue 1};
(0,-14)*{\scriptstyle\blue n};
(5,-14)*{\scriptstyle\blue i+1};
(8.5,-14)*{\scriptstyle\blue i};
(-2,15)*{\scriptstyle\delta};
\endxy
&=&
\xy
(-10,17)*{};
(5.7,-9)*{\blue\bullet};
(-12,-14)*{\scriptstyle\blue i-1};
(-6.5,-14)*{\scriptstyle\blue i-2};
(-2,-14)*{\scriptstyle\blue 1};
(2,-14)*{\scriptstyle\blue n};
(7,-14)*{\scriptstyle\blue i+1};
(12,-14)*{\scriptstyle\blue i};
(0,15)*{\scriptstyle\delta};
(7,0)*{(1^n)};
(0,0)*{\figs{0.27}{EEDelta}};
\endxy
\ -
\xy
(-10,17)*{};
(7.8,-9)*{\blue\bullet};
(-12,-14)*{\scriptstyle\blue i-1};
(-6.5,-14)*{\scriptstyle\blue i-2};
(-2,-14)*{\scriptstyle\blue 1};
(2,-14)*{\scriptstyle\blue n};
(7,-14)*{\scriptstyle\blue i+1};
(12,-14)*{\scriptstyle\blue i};
(0,15)*{\scriptstyle\delta};
(7,0)*{(1^n)};
(0,0)*{\figs{0.27}{EEDelta}};
\endxy
\end{array}
\end{equation}

\begin{equation}
\label{DeltaFEEDeltaF}
\begin{array}{ccccccc}
\xy
(0,0)*{\figs{0.25}{DeltaFEEDeltaF}};
(-12,0)*{(1^n)};
(6,-16)*{\scriptstyle\blue i};
(-3,-16)*{\scriptstyle\delta};
\endxy
&=&
\xy
 (-10,0)*{\black\xybox{(0,14);(0,-14); **\dir{-} ?(.5)*\dir{>}+
(2.3,0)*{\scriptstyle{}};}};
(-11,-16)*{\scriptstyle\delta};
 (-18,0)*{(1^n)};
 (-10,0)*{};(10,0)*{};
 (0,0)*{\blue\xybox{(0,14);(0,-14); **\dir{-} ?(.5)*\dir{<}+
(2.3,0)*{\scriptstyle{}};}};
(-1,-16)*{\scriptstyle\blue i};
 (-12,0)*{};(12,0)*{};
\endxy
&{}\qquad{}&
\xy
(0,0)*{\figs{0.25}{FDeltaEEFDelta}};
(12,0)*{(1^n)};
(-6,-16)*{\scriptstyle\blue i};
(3,-16)*{\scriptstyle \delta};
\endxy
&=&
\xy
 (0,0)*{\black\xybox{(0,14);(0,-14); **\dir{-} ?(.5)*\dir{>}+
(2.3,0)*{\scriptstyle{}};}};
(-1,-16)*{\scriptstyle\delta};
 (5,0)*{(1^n)};
 (-10,0)*{};(10,0)*{};
 (-10,0)*{\blue\xybox{(0,14);(0,-14); **\dir{-} ?(.5)*\dir{<}+
(2.3,0)*{\scriptstyle{}};}};
(-11,-16)*{\scriptstyle\blue i};
 (-12,0)*{};(12,0)*{};
\endxy
\end{array}
\end{equation}

\begin{equation}
\label{EEDeltaEE}
\begin{array}{ccccc}
\xy
(0,0)*{\figs{0.25}{EEDeltaEE}};
(-11,18)*{\scriptstyle\blue i};
(-7,18)*{\scriptstyle\blue i-1};
(-2,18)*{\scriptstyle\blue 1};
(2,18)*{\scriptstyle\blue n};
(7,18)*{\scriptstyle\blue i+2};
(13,18)*{\scriptstyle\blue i+1};
(-3,0)*{\scriptstyle\delta};
(7,0)*{(1^n)};
(-11,-18)*{\scriptstyle\blue i};
(-7,-18)*{\scriptstyle\blue i-1};
(-2,-18)*{\scriptstyle\blue 1};
(2,-18)*{\scriptstyle\blue n};
(7,-18)*{\scriptstyle\blue i+2};
(13,-18)*{\scriptstyle\blue i+1};
\endxy
&=&
\xy
 (-5,0)*{\blue\xybox{(0,14);(0,-14); **\dir{-} ?(.5)*\dir{>}+
(2.3,0)*{\scriptstyle{}};}};
 (0,0)*{\blue\xybox{(0,14);(0,-14); **\dir{-} ?(.5)*\dir{>}+
(2.3,0)*{\scriptstyle{}};}};
(3.5,0)*{\blue\cdots};
(8.5,0)*{\blue\xybox{(0,14);(0,-14); **\dir{-} ?(.5)*\dir{>}+
(2.3,0)*{\scriptstyle{}};}};
(13.5,0)*{\blue\xybox{(0,14);(0,-14); **\dir{-} ?(.5)*\dir{>}+
(2.3,0)*{\scriptstyle{}};}};
(17,0)*{\blue\cdots};
(22,0)*{\blue\xybox{(0,14);(0,-14); **\dir{-} ?(.5)*\dir{>}+
(2.3,0)*{\scriptstyle{}};}};
(26,0)*{\blue\xybox{(0,14);(0,-14); **\dir{-} ?(.5)*\dir{>}+
(2.3,0)*{\scriptstyle{}};}};
(-6.5,-18)*{\scriptstyle\blue i};
(-0.5,-18)*{\scriptstyle\blue i-1};
(7.5,-18)*{\scriptstyle\blue 1};
(12.5,-18)*{\scriptstyle\blue n};
(20,-18)*{\scriptstyle\blue i+2};
(26,-18)*{\scriptstyle\blue i+1};
(31,0)*{(1^n)};
(25,8)*{\blue\bullet};
\endxy
&-&
\xy
 (-5,0)*{\blue\xybox{(0,14);(0,-14); **\dir{-} ?(.5)*\dir{>}+
(2.3,0)*{\scriptstyle{}};}};
 (0,0)*{\blue\xybox{(0,14);(0,-14); **\dir{-} ?(.5)*\dir{>}+
(2.3,0)*{\scriptstyle{}};}};
(3.5,0)*{\blue\cdots};
(8.5,0)*{\blue\xybox{(0,14);(0,-14); **\dir{-} ?(.5)*\dir{>}+
(2.3,0)*{\scriptstyle{}};}};
(13.5,0)*{\blue\xybox{(0,14);(0,-14); **\dir{-} ?(.5)*\dir{>}+
(2.3,0)*{\scriptstyle{}};}};
(17,0)*{\blue\cdots};
(22,0)*{\blue\xybox{(0,14);(0,-14); **\dir{-} ?(.5)*\dir{>}+
(2.3,0)*{\scriptstyle{}};}};
(26,0)*{\blue\xybox{(0,14);(0,-14); **\dir{-} ?(.5)*\dir{>}+
(2.3,0)*{\scriptstyle{}};}};
(-6.5,-18)*{\scriptstyle\blue i};
(-0.5,-18)*{\scriptstyle\blue i-1};
(7.5,-18)*{\scriptstyle\blue 1};
(12.5,-18)*{\scriptstyle\blue n};
(20,-18)*{\scriptstyle\blue i+2};
(26,-18)*{\scriptstyle\blue i+1};
(31,0)*{(1^n)};
(-6,8)*{\blue\bullet};
\endxy
\end{array}
\end{equation}

\begin{equation}
\label{EEDeltaEEPerm}
\begin{array}{ccc}
\xy
(0,0)*{\figs{0.25}{EEDeltaEE}};
(-11,18)*{\scriptstyle\blue j};
(-7,18)*{\scriptstyle\blue j-1};
(-2,18)*{\scriptstyle\blue 1};
(2,18)*{\scriptstyle\blue n};
(7,18)*{\scriptstyle\blue j+2};
(13,18)*{\scriptstyle\blue j+1};
(-3,0)*{\scriptstyle\delta};
(7,0)*{(1^n)};
(-11,-18)*{\scriptstyle\blue i};
(-7,-18)*{\scriptstyle\blue i-1};
(-2,-18)*{\scriptstyle\blue 1};
(2,-18)*{\scriptstyle\blue n};
(7,-18)*{\scriptstyle\blue i+2};
(13,-18)*{\scriptstyle\blue i+1};
\endxy
&=&
\xy
(-5,0)*{\blue\xybox{(-5,-16);(26,16); **\dir{-} ?(.1)*\dir{<}+
(2.3,0)*{\scriptstyle{}};}};
(0,0)*{\blue\xybox{(-5,-16);(26,16); **\dir{-} ?(.1)*\dir{<}+
(2.3,0)*{\scriptstyle{}};}};
(10,0)*{\blue\xybox{(-5,-16);(26,16); **\dir{-} ?(.1)*\dir{<}+
(2.3,0)*{\scriptstyle{}};}};
(-5,0)*{\blue\xybox{(-5,16);(26,-16); **\dir{-} ?(.9)*\dir{>}+
(2.3,0)*{\scriptstyle{}};}};
(5,0)*{\blue\xybox{(-5,16);(26,-16); **\dir{-} ?(.9)*\dir{>}+
(2.3,0)*{\scriptstyle{}};}};
(10,0)*{\blue\xybox{(-5,16);(26,-16); **\dir{-} ?(.9)*\dir{>}+
(2.3,0)*{\scriptstyle{}};}};
(-7,-12)*{\blue\cdots};
(12,-12)*{\blue\cdots};
(-12,12)*{\blue\cdots};
(17,12)*{\blue\cdots};
(-21,-18)*{\scriptstyle\blue i};
(-16,-18)*{\scriptstyle\blue i-1};
(-6,-18)*{\scriptstyle\blue j+1};
(10,-18)*{\scriptstyle\blue j};
(20,-18)*{\scriptstyle\blue i+2};
(26,-18)*{\scriptstyle\blue i+1};
(-21,18)*{\scriptstyle\blue j};
(-11,18)*{\scriptstyle\blue i+2};
(-6,18)*{\scriptstyle\blue i+1};
(10,18)*{\scriptstyle\blue i};
(15,18)*{\scriptstyle\blue i-1};
(26,18)*{\scriptstyle\blue j+1};
(28,0)*{(1^n)};
\endxy
\end{array}
\end{equation}
\noindent Note that cyclicity implies the analogous relations with all 
orientations reversed. 
\end{defn}

Before giving the following lemma, we recall that the Karoubi envelope 
(or idempotent completion) of Khovanov and Lauda's 
2-categories, e.g. $\mathrm{Kar}
\,\mathcal{U}(\mathfrak{sl}_n)$ and 
$\mathrm{Kar}\,\mathcal{U}(\mathfrak{gl}_n)$, 
contain the categorified divided 
powers $\mathcal{E}_{\pm i}^{(a)}$, which satisfy 
$$
\mathcal{E}_{\pm i}^a=\left(\mathcal{E}_{\pm i}^{(a)}\right)^{\oplus [a]!}.
$$
In~\cite{KLMS} the $2$-morphisms in 
$\mathrm{Kar}\,\mathcal{U}(\mathfrak{sl}_2)$ between the divided powers 
were worked out explicitly. Using the fact 
that $\mathrm{Kar}\,\mathcal{U}(\mathfrak{sl}_2)$ can be embedded 
into $\mathrm{Kar}\,\mathcal{U}(\hat{\mathfrak{sl}}_n)$ for any choice of 
simple root, we can use the results in~\cite{KLMS}. We do not need much of 
that calculus in this paper, but we do have to recall 
the {\em splitters} (see definitions below Lemma 2.2.3 and 
see (2.63) in~\cite{KLMS}) 
$$
\xy
(0,0)*{\figs{0.25}{E2splitter}};
(17,0)*{\colon \mathcal{E}_{+i}^{(2)}\to \mathcal{E}_{+i}^2};
(0,-8)*{\blue\scriptstyle i};
(-5,8)*{\blue\scriptstyle i};
(5,8)*{\blue\scriptstyle i};
\endxy
\qquad\qquad
\xy
(0,0)*{\figs{0.25}{splitterE2}};
(17,0)*{\colon \mathcal{E}_{+i}^2\to \mathcal{E}_{+i}^{(2)}};
(0,8)*{\blue\scriptstyle i};
(-5,-8)*{\blue\scriptstyle i};
(5,-8)*{\blue\scriptstyle i};
\endxy
$$
and the relations (see (2.36), (2.64) and (2.65) in~\cite{KLMS}) 
$$
\xy
(0,0)*{\figs{0.35}{EEsplitterEE}};
(-8,-16)*{\scriptstyle\blue i};
(8,-16)*{\scriptstyle\blue i};
(-8,16)*{\scriptstyle\blue i};
(8,16)*{\scriptstyle\blue i};
\endxy
\;=\;
\xy
(0,0)*{\figs{0.35}{EEcrossingEE}};
(-8,-16)*{\scriptstyle\blue i};
(8,-16)*{\scriptstyle\blue i};
(-8,16)*{\scriptstyle\blue i};
(8,16)*{\scriptstyle\blue i};
\endxy
\qquad\qquad
\xy
(0,0)*{\figs{0.32}{E2EEE2}};
(0,-16)*{\scriptstyle\blue i};
\endxy
=0
\qquad\qquad
\xy
(0,0)*{\figs{0.32}{E2EEE2}};
(0,-16)*{\scriptstyle\blue i};
(-1.5,-3.5)*{\blue \bullet};
\endxy
=\;
\xy
(0,0)*{\blue\xybox{(0,14);(0,-14); **\dir2{-} ?(.5)*\dir{>}+
(2.3,0)*{\scriptstyle{}};}};
(-1,-16)*{\scriptstyle\blue i};
\endxy
\qquad\qquad
\xy
(0,0)*{\figs{0.32}{E2EEE2}};
(0,-16)*{\scriptstyle\blue i};
(1.5,-3.5)*{\blue \bullet};
\endxy
=
\;-\;\;
\xy
(0,0)*{\blue\xybox{(0,14);(0,-14); **\dir2{-} ?(.5)*\dir{>}+
(2.3,0)*{\scriptstyle{}};}};
(-1,-16)*{\scriptstyle\blue i};
\endxy
$$ 
for any $i=1,\ldots,n$. By cyclicity, we get similar splitters and 
relations for $\mathcal{E}_{-i}^{(2)}$, $i=1,\ldots,n$.

\begin{lem}
\begin{equation}
\label{EEFDeltaEE}
\begin{array}{ccccccc}
\xy
(0,0)*{\figs{0.25}{EEFDeltaEE}};
(10,0)*{(1^n)};
(-8,-16)*{\scriptstyle\blue i-1};
(8,-16)*{\scriptstyle\blue i+1};
(-9,0)*{\scriptstyle\blue i};
(3,0)*{\scriptstyle \delta};
\endxy
&=&
\xy
 (-5,0)*{\blue\xybox{(0,14);(0,-14); **\dir{-} ?(.5)*\dir{>}+
(2.3,0)*{\scriptstyle{}};}};
 (5,0)*{\blue\xybox{(0,14);(0,-14); **\dir{-} ?(.5)*\dir{>}+
(2.3,0)*{\scriptstyle{}};}};
(10,0)*{\blue\cdots};
 (20,0)*{\blue\xybox{(0,14);(0,-14); **\dir{-} ?(.5)*\dir{>}+
(2.3,0)*{\scriptstyle{}};}};
(-6,-16)*{\scriptstyle\blue i-1};
(4,-16)*{\scriptstyle\blue i-2};
(19,-16)*{\scriptstyle\blue i+1};
(25,0)*{(1^n)};
 (-10,0)*{};(10,0)*{};
\endxy
&{}\qquad{}&
\xy
(0,0)*{\figs{0.25}{EEDeltaFEE}};
(-10,0)*{(1^n)};
(-9,-16)*{\scriptstyle\blue i-1};
(9,-16)*{\scriptstyle\blue i+1};
(-3,0)*{\scriptstyle \delta};
(8,0)*{\scriptstyle\blue i};
\endxy
&=&
\xy
 (-5,0)*{\blue\xybox{(0,14);(0,-14); **\dir{-} ?(.5)*\dir{>}+
(2.3,0)*{\scriptstyle{}};}};
 (5,0)*{\blue\xybox{(0,14);(0,-14); **\dir{-} ?(.5)*\dir{>}+
(2.3,0)*{\scriptstyle{}};}};
(10,0)*{\blue\cdots};
 (20,0)*{\blue\xybox{(0,14);(0,-14); **\dir{-} ?(.5)*\dir{>}+
(2.3,0)*{\scriptstyle{}};}};
(-6,-16)*{\scriptstyle\blue i-1};
(4,-16)*{\scriptstyle\blue i-2};
(19,-16)*{\scriptstyle\blue i+1};
(-13,0)*{(1^n)};
 (-10,0)*{};(10,0)*{};
\endxy
\end{array}
\end{equation}

\begin{equation}
\label{EDeltaEEEDelta}
\begin{array}{ccccccc}
\xy
(0,0)*{\figs{0.25}{EDeltaEEEDelta}};
(12,0)*{(1^n)};
(-6,-16)*{\scriptstyle\blue i};
(3,-16)*{\scriptstyle \delta};
\endxy
&=&
\xy
 (0,0)*{\black\xybox{(0,14);(0,-14); **\dir{-} ?(.5)*\dir{>}+
(2.3,0)*{\scriptstyle{}};}};
(-1,-16)*{\scriptstyle \delta};
 (5,0)*{(1^n)};
 (-10,0)*{};(10,0)*{};
 (-10,0)*{\blue\xybox{(0,14);(0,-14); **\dir{-} ?(.5)*\dir{>}+
(2.3,0)*{\scriptstyle{}};}};
(-11,-16)*{\scriptstyle\blue i};
 (-12,0)*{};(12,0)*{};
\endxy
&{}\qquad{}&
\xy
(0,0)*{\figs{0.25}{E2DeltaEEE2Delta}};
(12,0)*{(1^n)};
(0,-16)*{\scriptstyle\blue i-1};
(12,-16)*{\scriptstyle\blue i+1};
(-7,-16)*{\scriptstyle\blue i};
(6,0)*{\scriptstyle \delta};
\endxy
&=&
\xy
(-5,0)*{\blue\xybox{(0,14);(0,-14); **\dir2{-} ?(.5)*\dir{>}+
(2.3,0)*{\scriptstyle{}};}};
 (5,0)*{\blue\xybox{(0,14);(0,-14); **\dir{-} ?(.5)*\dir{>}+
(2.3,0)*{\scriptstyle{}};}};
(10,0)*{\blue\cdots};
 (20,0)*{\blue\xybox{(0,14);(0,-14); **\dir{-} ?(.5)*\dir{>}+
(2.3,0)*{\scriptstyle{}};}};
(-6,-16)*{\scriptstyle\blue i};
(4,-16)*{\scriptstyle\blue i-1};
(19,-16)*{\scriptstyle\blue i+1};
(25,0)*{(1^n)};
 (-10,0)*{};(10,0)*{};
\endxy
\end{array}
\end{equation}

\begin{equation}
\label{DeltaEEEDeltaE}
\begin{array}{ccccccc}
\xy
(0,0)*{\figs{0.25}{DeltaEEEDeltaE}};
(-12,0)*{(1^n)};
(6,-16)*{\scriptstyle\blue i};
(-3,-16)*{\scriptstyle \delta};
\endxy
&=&
\xy
 (0,0)*{\blue\xybox{(0,14);(0,-14); **\dir{-} ?(.5)*\dir{>}+
(2.3,0)*{\scriptstyle{}};}};
(-1,-16)*{\scriptstyle i};
 (-18,0)*{(1^n)};
 (-10,0)*{};(10,0)*{};
 (-10,0)*{\black\xybox{(0,14);(0,-14); **\dir{-} ?(.5)*\dir{>}+
(2.3,0)*{\scriptstyle{}};}};
(-11,-16)*{\scriptstyle \delta};
 (-12,0)*{};(12,0)*{};
\endxy
&{}\qquad{}&
\xy
(0,0)*{\figs{0.25}{DeltaE2EEDeltaE2}};
(-14,0)*{(1^n)};
(0,-16)*{\scriptstyle\blue i+1};
(7,-16)*{\scriptstyle\blue i};
(-11,-16)*{\scriptstyle\blue i-1};
(-6,0)*{\scriptstyle \delta};
\endxy
&=&
\xy
(-5,0)*{\blue\xybox{(0,14);(0,-14); **\dir{-} ?(.5)*\dir{>}+
(2.3,0)*{\scriptstyle{}};}};
 (11,0)*{\blue\xybox{(0,14);(0,-14); **\dir{-} ?(.5)*\dir{>}+
(2.3,0)*{\scriptstyle{}};}};
(2,0)*{\blue\cdots};
 (20,0)*{\blue\xybox{(0,14);(0,-14); **\dir2{-} ?(.5)*\dir{>}+
(2.3,0)*{\scriptstyle{}};}};
(-6,-16)*{\scriptstyle\blue i-1};
(10,-16)*{\scriptstyle\blue i+1};
(19,-16)*{\scriptstyle\blue i};
(-14,0)*{(1^n)};
 (-10,0)*{};(10,0)*{};
\endxy
\end{array}
\end{equation}
By cyclicity, we get the analogous relations with all orientations 
reversed. 
\end{lem}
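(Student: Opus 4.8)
The plan is to derive all three displayed relations --- and, via the stated cyclicity, their reversed-orientation analogues --- directly from the defining relations of $\hat{\mathcal{S}}(n,n)$, rather than by expanding the $\delta$-vertices all the way into raw $\mathcal{U}(\hat{\mathfrak{sl}}_n)$ diagrams. The unifying principle is that each $\delta$-vertex encodes the factorisation of $\mathcal{E}_{+\delta}$ as the cyclic word $\mathcal{E}_{+i}^{(2)}\mathcal{E}_{+(i-1)}\cdots\mathcal{E}_{+1}\mathcal{E}_{+n}\cdots\mathcal{E}_{+(i+1)}$ in weight $(1^n)$, so that the already-established mixed relations~\eqref{EEDeltaEE}, \eqref{EEDeltaEEPerm} and~\eqref{DeltaFEEDeltaF} let me push $\mathcal{E}_{\pm i}$-strands through a vertex at the cost of controlled dot-difference and permutation terms. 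Throughout I work in the single weight $(1^n)$, where the defining bubble normalisations~\eqref{deltabubbles} hold and the $\delta$-strand is invertible by~\eqref{deltainvert2}.

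For the left identity in~\eqref{EEFDeltaEE} I would first apply the $F$--$\Delta$ relation~\eqref{DeltaFEEDeltaF} to the interior $\mathcal{E}_{-i}$-strand, which trades the entangled $\mathcal{E}_{-i}$--vertex configuration for a straight $\delta$-strand running beside a single blue $\mathcal{E}_{-i}$; the two outer strands $\mathcal{E}_{+(i-1)}$ and $\mathcal{E}_{+(i+1)}$ are then returned to the vertex using~\eqref{EEDeltaEE} and the permutation relation~\eqref{EEDeltaEEPerm}, and the resulting adjacent $\mathcal{E}_{+i}\mathcal{E}_{-i}$ (resp.\ $\mathcal{E}_{-i}\mathcal{E}_{+i}$) pair is cancelled by the $\mathfrak{sl}_2$-relation in weight $(1^n)$, whose correction bubbles vanish by~\eqref{deltabubbles}. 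What remains is exactly the identity $2$-morphism on $\mathcal{E}_{+(i-1)}\mathcal{E}_{+(i-2)}\cdots\mathcal{E}_{+(i+1)}$; the right identity is the mirror-image computation. For~\eqref{EDeltaEEEDelta} and~\eqref{DeltaEEEDeltaE} the two $\delta$-vertices appearing in each diagram are first brought together and annihilated against one another by invertibility~\eqref{deltainvert2} and the biadjointness relations~\eqref{eq_biadjoint1}--\eqref{eq_biadjoint2}, leaving a straight $\delta$-strand alongside a single $\mathcal{E}_{\pm i}$-strand, which is the asserted right-hand side. The thickened (doubled) strands in the right-hand identities of~\eqref{EDeltaEEEDelta} and~\eqref{DeltaEEEDeltaE} are produced at the very end by the KLMS splitter maps recalled above, converting $\mathcal{E}_{+i}^{2}$ into $\mathcal{E}_{+i}^{(2)}$.

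Because of cyclicity I may fix one convenient index $i$ and transport the conclusion to all others, so the only genuinely delicate combinatorics --- the wrap-around in the cyclic word --- has to be treated once. I expect the main obstacle to be precisely this seam, where the distinguished divided-power factor $\mathcal{E}_{+i}^{(2)}$ sits: the dot-slide relations~\eqref{DeltaEEX}--\eqref{RXEEDelta} applied there each produce a difference of two diagrams (a dot on the left versus the right of the double strand), and these dot-difference terms must be matched against the splitter relations for $\mathcal{E}_{+i}^{(2)}$ recalled before the lemma (the vanishing of the undotted composite and the single-dot reductions), all while keeping track of the sign introduced by our $(-1)$-twist of $E_{\pm n}$ (cf.\ Proposition~\ref{prop:iota}). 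I anticipate that once those splitter reductions are carried out at the seam, the spurious terms cancel in pairs and the stated right-hand sides emerge.
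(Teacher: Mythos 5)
Your proposal correctly identifies the relevant ingredients (\eqref{EEDeltaEE}, \eqref{DeltaFEEDeltaF}, the KLMS splitter relations, bubble evaluations), but the way you chain them together fails at two key points. First, for~\eqref{EEFDeltaEE} you propose to begin by ``applying''~\eqref{DeltaFEEDeltaF} to the interior $\mathcal{E}_{-i}$-strand. But~\eqref{DeltaFEEDeltaF} evaluates the composite through $\mathcal{E}_{i-1}\cdots\mathcal{E}_{i+1}$ taken in the \emph{opposite} order from the one on the left-hand side of~\eqref{EEFDeltaEE}, so no sub-diagram of that left-hand side matches it; and knowing that one composite of a pair of $2$-morphisms is the identity does not by itself give the other (it only makes the other an idempotent). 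The actual argument is a one-step computation: replace the inner digon $\DeltaEE_{\delta,i}\circ\EEDelta^{\delta,i}$ by the right-hand side of~\eqref{EEDeltaEE}; the cup--cap enclosing the $i$-strand then closes each term into an $i$-bubble in weight $(1^n)$ --- the undotted one has degree $-2$ and vanishes, the once-dotted one has degree $0$ and evaluates to $\mp 1$ by the degree-zero bubble normalisation inherited from~\cite{MTh} (not by~\eqref{deltabubbles}, which concerns $\delta$-bubbles only). Second, for~\eqref{EDeltaEEEDelta} and~\eqref{DeltaEEEDeltaE} you assert that the two $\delta$-vertices ``annihilate against one another'' by~\eqref{deltainvert2} and the biadjointness relations~\eqref{eq_biadjoint1}--\eqref{eq_biadjoint2}. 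This is false: $\DeltaEE_{\delta,i}$ and $\EEDelta^{\delta,i}$ are not $\delta$-cups or caps, their composite is governed by~\eqref{EEDeltaEE} and equals a \emph{difference of dotted diagrams}, not the identity. The paper instead obtains the thin-strand identity in~\eqref{EDeltaEEEDelta} from the second relation in~\eqref{DeltaFEEDeltaF} together with curl removal and degree-zero bubble evaluation, and the divided-power identity from~\eqref{EEDeltaEE} combined with the splitter relations of~\cite{KLMS}: the term carrying the undotted composite $\mathcal{E}_{i}^{(2)}\to\mathcal{E}_i^2\to\mathcal{E}_i^{(2)}$ vanishes outright, while the dotted one yields $\mp$ the thick identity.

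Two further points. The ``cyclicity'' invoked at the end of the lemma is cyclicity of $2$-morphisms with respect to the biadjunction (rotation of diagrams, hence the orientation-reversed versions); it is \emph{not} the rotation $i\mapsto i+1$ of the affine Dynkin diagram, and no $2$-functor realising that rotation is available inside $\Scat(n,n)$ --- it is essentially what $\mathcal{E}_{\pm\delta}$ is being introduced to categorify --- so you cannot ``fix one convenient index and transport the conclusion''. Likewise the $(-1)$-twist of $E_{\pm n}$ in Proposition~\ref{prop:iota} concerns the embedding into $\hat{\SD}(n+1,n)$ used much later for injectivity on Grothendieck groups, and plays no role in these purely internal relations.
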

\begin{proof}
The equations in~\eqref{EEFDeltaEE} follow directly from~\eqref{EEDeltaEE} and 
the bubble relations. Note that one of the terms we get by 
applying~\eqref{EEDeltaEE} has a bubble of degree $-2$, which is equal to 
zero, and the other term has a bubble of degree $0$ which is equal to 
$-1$ if it is counter-clockwise and $+1$ if it is clockwise.     
\vskip0.2cm
We only prove the equations in~\eqref{EDeltaEEEDelta}. The equations 
in~\eqref{DeltaEEEDeltaE} can be proved similarly. By the second 
relation in~\eqref{DeltaFEEDeltaF}, curl removal and 
the evaluation of degree zero bubbles, we get
$$
\xy
(0,0)*{\figs{0.25}{EDeltaEEEDelta}};
(12,0)*{(1^n)};
(-6,-16)*{\scriptstyle\blue i};
(3,-16)*{\scriptstyle \delta};
\endxy
=
\xy
  (0,-3)*{\blue\xybox{
  (-3,-8)*{};(3,8)*{} **\crv{(-3,-1) & (3,1)}?(1)*\dir{>};?(0)*\dir{>};
    (3,-8)*{};(-3,8)*{} **\crv{(3,-1) & (-3,1)}?(1)*\dir{>};
  (-3,-12)*{\bbsid};
  (-3,8)*{\bbsid};
  (3,8)*{}="t1";
  (9,8)*{}="t2";
  (3,-8)*{}="t1'";
  (9,-8)*{}="t2'";
   "t1";"t2" **\crv{(3,14) & (9, 14)};
   "t1'";"t2'" **\crv{(3,-14) & (9, -14)};
   "t2'";"t2" **\dir{-} ?(.5)*\dir{<};}};
   (9,0)*{}; (-5,-16)*{\scriptstyle\blue i};
(15,0)*{\black\xybox{(0,14);(0,-14); **\dir{-} ?(.5)*\dir{>}+
(2.3,0)*{\scriptstyle{}};}};
(14,-16)*{\scriptstyle \delta};
 (20,0)*{(1^n)};
 (-10,0)*{};(10,0)*{};
\endxy
\;\;=\;\;
\xy
 (0,0)*{\black\xybox{(0,14);(0,-14); **\dir{-} ?(.5)*\dir{>}+
(2.3,0)*{\scriptstyle{}};}};
(-1,-16)*{\scriptstyle \delta};
 (5,0)*{(1^n)};
 (-10,0)*{};(10,0)*{};
 (-10,0)*{\blue\xybox{(0,14);(0,-14); **\dir{-} ?(.5)*\dir{>}+
(2.3,0)*{\scriptstyle{}};}};
(-11,-16)*{\scriptstyle\blue i};
 (-12,0)*{};(12,0)*{};
\endxy
$$

By~\eqref{EEDeltaEE} and the relations in (2.64) 
in~\cite{KLMS}, we get
$$
\xy
(0,0)*{\figs{0.25}{E2DeltaEEE2Delta}};
(12,0)*{(1^n)};
(-2,-16)*{\scriptstyle\blue i-1};
(9,-16)*{\scriptstyle\blue i+1};
(-7,-16)*{\scriptstyle\blue i};
(6,0)*{\delta};
\endxy
\;\;=\;\;
\xy
(0,0)*{\figs{0.32}{E2EEE2}};
(0,-16)*{\scriptstyle\blue i};
(24,-3.5)*{\blue \bullet};
(10,0)*{\blue\xybox{(0,14);(0,-14); **\dir{-} ?(.5)*\dir{>}+
(2.3,0)*{\scriptstyle{}};}};
(15,0)*{\blue\cdots};
(25,0)*{\blue\xybox{(0,14);(0,-14); **\dir{-} ?(.5)*\dir{>}+
(2.3,0)*{\scriptstyle{}};}};
(10,-16)*{\scriptstyle\blue i-1};
(25,-16)*{\scriptstyle\blue i+1};
(30,0)*{(1^n)};
\endxy
\;\;-\;\;
\xy
(0,0)*{\figs{0.32}{E2EEE2}};
(0,-16)*{\scriptstyle\blue i};
(1.5,-3.5)*{\blue \bullet};
(10,0)*{\blue\xybox{(0,14);(0,-14); **\dir{-} ?(.5)*\dir{>}+
(2.3,0)*{\scriptstyle{}};}};
(15,0)*{\blue\cdots};
(25,0)*{\blue\xybox{(0,14);(0,-14); **\dir{-} ?(.5)*\dir{>}+
(2.3,0)*{\scriptstyle{}};}};
(10,-16)*{\scriptstyle\blue i-1};
(25,-16)*{\scriptstyle\blue i+1};
(30,0)*{(1^n)};
\endxy
\;\;=\;\;
\xy
(0,0)*{\blue\xybox{(0,14);(0,-14); **\dir2{-} ?(.5)*\dir{>}+
(2.3,0)*{\scriptstyle{}};}};
(5,0)*{\blue\xybox{(0,14);(0,-14); **\dir{-} ?(.5)*\dir{>}+
(2.3,0)*{\scriptstyle{}};}};
(10,0)*{\blue\cdots};
(15,0)*{\blue\xybox{(0,14);(0,-14); **\dir{-} ?(.5)*\dir{>}+
(2.3,0)*{\scriptstyle{}};}};
(-2,-16)*{\scriptstyle\blue i};
(5,-16)*{\scriptstyle\blue i-1};
(15,-16)*{\scriptstyle\blue i+1};
(20,0)*{(1^n)};
\endxy
$$

\end{proof}

\begin{lem}
\label{lem:polidigonreswithoutdots}
We have 
$$
\xy
(0,0)*{\black\xybox{(0,14);(0,-14); **\dir{-} ?(.5)*\dir{>}+
(2.3,0)*{\scriptstyle{}};}};
(-1,-17)*{\scriptstyle \delta};
(-10,0)*{\blue\ncbub_{i}};
(5,0)*{(1^n)};
\endxy
\;\;
=\;\;
\xy
(0,0)*{\figs{0.25}{DeltaEEDelta}};
(10,0)*{(1^n)};
(-4,-4)*{\blue \scriptstyle i};
(6,-4)*{\blue \scriptstyle i+1};
\endxy
\;\;
=
\;\;
\xy
(0,0)*{\black\xybox{(0,14);(0,-14); **\dir{-} ?(.5)*\dir{>}+
(2.3,0)*{\scriptstyle{}};}};
(-1,-17)*{\scriptstyle \delta};
(10,0)*{\blue\ncbub_{i+1}};
(20,0)*{(1^n)};
\endxy
$$
\end{lem}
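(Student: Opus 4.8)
The plan is to read the middle diagram as the closed composite $\EEDelta^{\delta,i}\circ\DeltaEE_{\delta,i}$ --- the ``$\delta$-digon'' whose two boundary arcs form the $\delta$-strand and whose interior is the cascade $\mathcal{E}_i\mathcal{E}_{i-1}\cdots\mathcal{E}_1\mathcal{E}_n\cdots\mathcal{E}_{i+1}{\mathbf 1}_n$. This is precisely dual to the situation of~\eqref{EEDeltaEE}, which resolves the \emph{cascade}-digon (whose interior edge is the $\delta$-strand) as a difference of two dotted cascades; here the interior is the cascade itself, and I expect the digon to collapse to a single undotted bubble. The two outer equalities should then arise by resolving one and the same digon in two ways, peeling the cascade off towards the left (producing the $i$-coloured bubble) or towards the right (producing the $(i+1)$-coloured bubble), which is exactly why the figure carries only the two outer labels $i$ and $i+1$.

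To carry this out I would bend the outgoing $\delta$-strand around using the biadjunction~\eqref{eq_biadjoint1} together with the cyclicity that rotates $\DeltaEE_{\delta,i}$ into $\FFDelta^{\delta,i+1}$, thereby converting the $\delta$-digon into a configuration to which~\eqref{EEDeltaEE} applies; alternatively one straightens the $\delta$-strand through the interior directly using the snake relations~\eqref{DeltaFEEDeltaF} and the interaction relations~\eqref{EDeltaEEEDelta} and~\eqref{DeltaEEEDeltaE} established in the previous lemma. Applying~\eqref{EEDeltaEE} then yields two terms, one carrying a dot on the rightmost ($i+1$) and one a dot on the leftmost ($i$) strand of the cascade. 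Undoing the bend closes each dotted strand into a dotted bubble of the corresponding colour sitting beside the $\delta$-strand.

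It then remains to evaluate the two bubble terms, and here I would argue by degree exactly as in the proof of~\eqref{EEFDeltaEE}: in the weight $(1^n)$ one of the two terms carries a bubble of negative degree and therefore vanishes, while in the surviving term the spectator data collapse --- via curl removal, the evaluation of degree-zero bubbles from~\cite{KLMS}, and the normalisation~\eqref{deltabubbles} --- to leave exactly the undotted counterclockwise $i$-bubble next to the $\delta$-strand. Resolving towards the other side interchanges the two outer colours and produces the undotted counterclockwise $(i+1)$-bubble, completing the chain of equalities. The main obstacle is the bookkeeping of the bending and cyclicity moves, namely keeping the orientations and the weight straight as the $\delta$-strand is dragged across the cascade so that~\eqref{EEDeltaEE} applies with the correct outer colours, followed by the degree count that selects the surviving term and certifies that the remaining bubble is undotted and correctly oriented.
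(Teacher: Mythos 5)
Your reading of the middle diagram is correct, and the relation you name only in your ``alternatively'' clause --- the first equation of \eqref{DeltaFEEDeltaF} --- is in fact all the paper uses: closing off the downward $i$-strand on both sides of that relation turns its left-hand side into the closed polygon and its right-hand side into the $\delta$-strand with a \emph{dotless} $i$-bubble beside it; the $(i+1)$-bubble version comes from the same relation together with the observation that the two orientations of the dotless degree-two $(i+1)$-bubble agree in weight $(1^n)$ by the infinite Grassmannian relation. No dots and no vanishing terms ever enter.

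The route you actually develop has a genuine gap. Relation \eqref{EEDeltaEE} computes the composite $\DeltaEE_{\delta,i}\circ\EEDelta^{\delta,i}$ (cascade $\to\delta\to$ cascade), which is not the diagram of the lemma ($\delta\to$ cascade $\to\delta$); bending the $\delta$-strand does not interchange these two composites but closes the $\delta$-strand into a circle, landing you in $\mathrm{END}({\mathbf 1}_n)$. To come back to an endomorphism of $\mathcal{E}_{+\delta}{\mathbf 1}_n$ you need the invertibility of $\mathcal{E}_{\pm\delta}$ (relations \eqref{deltabubbles} and \eqref{deltainvert2}, i.e.\ the content of Lemma~\ref{lem:END}), and what then remains to be evaluated is the full closure of the once-dotted cascade identity --- a product of $n$ nested circles, not two bubbles. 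Moreover, the dichotomy you invoke (``one bubble of negative degree vanishes, the other has degree zero'') is borrowed from the proof of \eqref{EEFDeltaEE}, where exactly one cascade strand is closed and sits in a region different from $(1^n)$; in the present lemma the bubble of the answer sits in the region $(1^n)$, where a once-dotted bubble has degree $4$ and the dotless one has degree $2$, so your mechanism neither kills one term nor produces the dotless degree-two bubble that the statement asserts. The computation could in principle be completed with the bubble-slide and circle-evaluation machinery of Lemma~\ref{lem:polidigonreswithdots}, but as written the decisive step is missing, whereas the intended proof is a one-line partial trace of \eqref{DeltaFEEDeltaF}.
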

\begin{proof}
The first equality is a direct consequence of the first relation 
in~\eqref{DeltaFEEDeltaF}. 

The second is a consequence of the first relation in~\eqref{DeltaFEEDeltaF} 
and the fact that 
$$
\xy
(10,0)*{\blue\ncbub_{i+1}};
(20,0)*{(1^n)};
\endxy
\;\;
=
\;
\xy
(10,0)*{\blue\nccbub_{i+1}};
(20,0)*{(1^n)};
\endxy,
$$
which follows from the infinite Grassmannian relation for bubbles.
\end{proof}

In order to formulate the following results, define 
$$
\xy
(0,0)*{\blue\bbox{z_m}};
(10,0)*{(1^n)};
\endxy
:=
-\left(\xy
(0,0)*{{\blue \nccbub}+{\blue \nccbub}+\cdots+{\blue \nccbub}};
(-15,4)*{\blue\scriptstyle{i-1}};
(2,4)*{\blue\scriptstyle{i-2}};
(27,4)*{\blue\scriptstyle{m}};
\endxy
\right)
\xy
(0,0)*{(1^n)};
\endxy
$$
The sum of the bubbles is over the colors 
$i-1,i-2,\ldots,m$, if $1\leq m\leq i-1$, and over the 
colors $i-1,i-2,\ldots,1,n,n-1,\ldots m$, if $m\geq i+1$. 
These are exactly the colors of all the strands in the diagram on the 
left-hand side of Lemma~\ref{lem:polidigonreswithdots} between 
the strands $i-1$ and $m$.  
By definition we take ${\blue\bbox{z_i}}=0$ and use 
the convention that $0^0=1$. 

Similarly, we define
$$
\xy
(0,0)*{\blue\bbox{y_m}};
(10,0)*{(1^n)};
\endxy
:=
-\left(\xy
(0,0)*{{\blue \ncbub}+{\blue \ncbub}+\cdots+{\blue \ncbub}};
(-16,4)*{\blue\scriptstyle{m}};
(2,4)*{\blue\scriptstyle{m-1}};
(28,4)*{\blue\scriptstyle{i+2}};
\endxy
\ \right)
\xy
(0,0)*{(1^n)};
\endxy.
$$
The sum of the bubbles is over the colors 
$m,m-1,\ldots,i+2$, if $i+2\leq m\leq n$, and over the 
colors $m,m-1,\ldots,1,n,n-1,\ldots i+2$, if $m\leq i+1$. These are exactly 
the colors of all the strands in the diagram on the 
left-hand side of Lemma~\ref{lem:polidigonreswithdots} between 
the strands $m$ and $i+2$.  
By definition we take ${\blue\bbox{y_{i+1}}}=0$ and use 
the convention that $0^0=1$. 

Note that 
$$\blue\bbox{y_{i-1}}=\blue\bbox{z_{i+2}}$$ 
by the infinite Grassmannian relation. 

\begin{lem}
\label{lem:polidigonreswithdots}
For any $1\leq m\leq n$ and $s,t\in\mathbb{N}$, we have 
\begin{equation}
\label{eq:polidigonreswithdots}
\xy
(0,0)*{\figs{0.25}{DeltaEEDeltaCompl}};
(12,0)*{(1^n)};
(-6,-4)*{\blue\scriptstyle{i}};
(0,-19)*{\scriptstyle \delta};
(1.9,4)*{\blue\bullet};
(-3.6,4)*{\blue\bullet};
(-5.5,5.5)*{\blue\scriptstyle{t}};
(0,5)*{\blue\scriptstyle{s}};
\endxy
=
\sum_{j=0}^{s}\binom{s}{j}
\xy
(0,0)*{\black\xybox{(0,14);(0,-14); **\dir{-} ?(.5)*\dir{>}+
(2.3,0)*{\scriptstyle{}};}};
(-1,-17)*{\scriptstyle \delta};
(-15,0)*{{\blue\cbub{s+t-j}{i}}\;{\blue\bbox{z_m^{j}}}};
(5,0)*{(1^n)};
\endxy.
\end{equation}
On the left-hand side of Equation~\eqref{eq:polidigonreswithdots}, the $t$ dots 
are on the $i$-th strand and the $s$ dots are on the $m$-th strand. Similarly, 
we have 
\begin{equation}
\label{eq:polidigonreswithdots2}
\xy
(0,0)*{\figs{0.25}{DeltaEEDeltaCompl}};
(12,0)*{(1^n)};
(-6,-4)*{\blue\scriptstyle{i}};
(0,-19)*{\scriptstyle \delta};
(3.6,4)*{\blue\bullet};
(-1.9,4)*{\blue\bullet};
(0,5)*{\blue\scriptstyle{t}};
(5.5,5)*{\blue\scriptstyle{s}};
\endxy
=
\sum_{j=0}^{t}\binom{t}{j}\;\;
\xy
(0,0)*{\black\xybox{(0,14);(0,-14); **\dir{-} ?(.5)*\dir{>}+
(2.3,0)*{\scriptstyle{}};}};
(-1,-17)*{\scriptstyle \delta};
(12,0)*{{\blue\ccbub{s+t-j}{i+1\ \ \ \ }}\;{\blue\bbox{y_m^{j}}}};
(30,0)*{(1^n)};
\endxy.
\end{equation}
On the left-hand side of Equation~\eqref{eq:polidigonreswithdots}, the $t$ dots 
are on the $m$-th strand and the $s$ dots are on the $i+1$-st strand. 
\end{lem}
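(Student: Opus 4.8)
The plan is to prove~\eqref{eq:polidigonreswithdots} in detail; equation~\eqref{eq:polidigonreswithdots2} then follows by the mirror-symmetric argument obtained from reflecting the diagrams left-to-right, with the roles of $i$ and $i+1$ interchanged and $z_m$ replaced by $y_m$ throughout. Fix the closed digon on the left-hand side and think of the $t$ dots on the $i$-th strand and the $s$ dots on the $m$-th strand as formal dot variables that we are free to slide along their strands. The heart of the matter is a \emph{single-dot transport identity}: inside the digon, a lone dot on the $m$-th strand equals a dot on the $i$-th strand plus the central element $z_m$ (with no dot on either interior strand). Granting this, the $t$ dots already sitting on the $i$-th strand act as $x_i$, while each of the $s$ transported dots acts as $x_i+z_m$; these commute, since dots on a common strand merely stack and $z_m$, being a sum of bubbles, is central and can be isotoped freely past everything.

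To establish the transport identity I would push one dot from the $m$-th strand toward the $i$-th strand, one adjacent interior strand at a time, using the dot-migration relations~\eqref{DeltaEEX} and~\eqref{DeltaEERX} at the bottom trivalent vertex (and, after invoking full cyclicity, their cap-side analogues~\eqref{LXEEDelta} and~\eqref{RXEEDelta}). Each elementary slide across an interior strand of color $k$ rewrites the dot in terms of a dot on the neighboring strand plus a resolution term; the resolution term, sitting at the turn-back of the digon, closes up via the first relation in~\eqref{DeltaFEEDeltaF} and curl removal into precisely a counterclockwise $k$-colored bubble. Summing the resulting telescoping differences over the colors $i-1,i-2,\ldots,m$ lying between the two strands leaves a single dot on the $i$-th strand together with exactly the sum of bubbles defining $z_m$, once the sign conventions in the definition of $z_m$ are accounted for.

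With the transport identity in hand, clearing all $s$ dots off the $m$-th strand amounts to applying $(x_i+z_m)^s$; since $x_i$ and $z_m$ commute, the binomial theorem gives $\sum_{j=0}^s\binom{s}{j}x_i^{\,s-j}z_m^{\,j}$, and absorbing the $t$ dots already present on the $i$-th strand turns the power of $x_i$ into $x_i^{\,s+t-j}$. At this stage every term is a digon carrying $s+t-j$ dots on the $i$-th strand, no dots on any other interior strand, and a central scalar factor $z_m^{\,j}$. Collapsing each such dotted digon by the dotted version of the argument proving Lemma~\ref{lem:polidigonreswithoutdots}---the first relation in~\eqref{DeltaFEEDeltaF} followed by evaluation of the remaining degree-zero data---replaces it by the $\delta$-strand times the counterclockwise $i$-colored bubble carrying $s+t-j$ dots, which is exactly the right-hand side of~\eqref{eq:polidigonreswithdots}.

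The main obstacle is the bookkeeping in the transport identity: one must verify that the bubble corrections produced by the successive slides are indexed by precisely the colors appearing in $z_m$---including, in the case $m\geq i+1$, the wrap-around through $1,n,n-1,\ldots$ built into its definition---that their signs combine correctly, and that bubbles of negative degree vanish while those of degree zero evaluate to $\pm 1$, exactly as in the proof of the preceding lemma. A secondary subtlety is making the binomial expansion rigorous, which is cleanest by induction on $s$ via Pascal's rule rather than by an informal operator calculus, together with checking the boundary conventions $z_i=0$, $y_{i+1}=0$, and $0^0=1$ so that the $s=0$ and $j=s$ terms come out correctly.
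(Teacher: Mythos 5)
Your global strategy coincides with the paper's: the whole lemma reduces to the single-dot transport identity $x_m=x_i+z_m$ inside the digon, after which the binomial expansion and the collapse of the dotted digon via the first relation in~\eqref{DeltaFEEDeltaF} are exactly the paper's induction on $s$ (the paper phrases the binomial step as the recursion~\eqref{eq:inductionstep} plus induction rather than an operator calculus, but that is cosmetic). The gap is in how you propose to prove the transport identity. The relations~\eqref{DeltaEEX},~\eqref{DeltaEERX},~\eqref{LXEEDelta} and~\eqref{RXEEDelta} only concern a crossing of the two \emph{outermost} strands at the $\delta$-splitter; they give no way to migrate a dot sitting on a general interior strand of color $m$ across its neighbours, and the claim that each elementary slide ``closes up via~\eqref{DeltaFEEDeltaF} and curl removal into precisely a counterclockwise $k$-coloured bubble'' is asserted rather than derived. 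What the paper actually does is different in mechanism: a left curl on the $m$-th strand vanishes because the region it encloses has $(m{+}1)$-st weight entry $-1$, hence lies outside $\Lambda(n,n)$; resolving that curl (Equation~\eqref{eq:polidigoncurl}) converts one dot on the $m$-th strand into an inserted degree-two $m$-bubble, which is then slid leftward strand by strand, each adjacent-colour bubble slide depositing a dot on the next strand (with the degree-zero bubbles evaluating to $\pm1$) and leaving behind exactly the bubbles that make up $z_m$. You would need to supply an argument of comparable substance for your ``elementary slide''.

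Moreover, your proposal treats all $m$ uniformly, but the paper's left-curl argument genuinely breaks down at $m=i+1$: there the region between the $(i{+}2)$-th and $(i{+}1)$-th strands is labelled $(1^i,2,0,1^{n-(i+2)})$, so the left curl has degree four and cannot be traded for a single dot. The paper must rerun the whole argument with a \emph{right} curl (Equations~\eqref{eq:rightcurlrel}--\eqref{eq:finalreduction}), and an appeal to Lemma~\ref{lem:polidigonreswithoutdots} plus the infinite Grassmannian relation to convert the resulting $(i{+}2)$-bubble into the $(i{+}1)$-bubble required by the definition of $z_{i+1}$. Any correct proof has to notice and handle this exceptional case; your write-up does not, which is a further sign that the transport identity has not actually been checked against the diagrammatics.
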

\begin{proof}
We only prove the first equation. The second can be proved in a similar way. 
The proof is by induction w.r.t. $s$. For $s=0$ and any 
$1\leq m\leq n$ and $t\in\mathbb{N}$, the result follows 
from~\eqref{DeltaFEEDeltaF}. 

Suppose $s>0$, $t\in\mathbb{N}$ and $m\ne i+1$. The case $m=i$ 
follows from~\eqref{DeltaFEEDeltaF}, so we can assume that $m\ne i$. 
First note the following:
\begin{equation}
\label{eq:polidigoncurl}
0=
\xy
(0,0)*{\figs{0.25}{Polidigoncurl}};
(12,0)*{(1^n)};
(-6,-4)*{\blue\scriptstyle{i}};
(0,4)*{\blue\scriptstyle{m}};
(0,-19)*{\scriptstyle \delta};
(-3.6,4)*{\blue\bullet};
(-5.5,5.5)*{\blue\scriptstyle{t}};
\endxy
=
\;\;-\;\;
\xy
(0,0)*{\figs{0.25}{DeltaEEDeltaCompl}};
(12,0)*{(1^n)};
(-6,-4)*{\blue\scriptstyle{i}};
(0,-19)*{\scriptstyle \delta};
(2.2,4)*{\blue\bullet};
(1,-3)*{\blue\scriptstyle{m}};
(-3.6,4)*{\blue\bullet};
(-5.5,5.5)*{\blue\scriptstyle{t}};
\endxy
\;\;
+
\;\;
\xy
(0,0)*{\figs{0.25}{Polidigonbubble}};
(12,0)*{(1^n)};
(-6,-4)*{\blue\scriptstyle{i}};
(0,0)*{\blue\scriptstyle{m}};
(0,-19)*{\scriptstyle \delta};
(0,-4)*{\blue\scriptstyle{-1}};
(-3.6,4)*{\blue\bullet};
(-5.5,5.5)*{\blue\scriptstyle{t}};
\endxy.
\end{equation}
The first equality holds, because the label of the region 
inside the curl does not belong to $\Lambda(n,n)$; its $m+1$-st 
entry equals $-1$. The second equality follows from resolving 
the curl. The minus sign is a consequence of our normalization of 
degree zero bubbles in~\cite{MTh}, because the label $\lambda$ 
of the region just outside the 
bubble satisfies $\lambda_{m+1}=0$. Note that the bubble in the second term has 
degree two, since $\lambda_m-\lambda_{m+1}=1$, for any $m\ne i,i+1$.

Equation~\eqref{eq:polidigoncurl} implies 
\begin{equation}
\label{eq:polidigondotred}
\xy
(0,0)*{\figs{0.25}{DeltaEEDeltaCompl}};
(12,0)*{(1^n)};
(-6,-4)*{\blue\scriptstyle{i}};
(0,-19)*{\scriptstyle \delta};
(1,-3)*{\blue\scriptstyle{m}};
(2.2,4)*{\blue\bullet};
(4,6)*{\blue\scriptstyle{s}};
(-3.6,4)*{\blue\bullet};
(-5.5,5.5)*{\blue\scriptstyle{t}};
\endxy
\;\;
=
\;\;
\xy
(0,0)*{\figs{0.25}{Polidigonbubble}};
(12,0)*{(1^n)};
(-6,-4)*{\blue\scriptstyle{i}};
(0,0)*{\blue\scriptstyle{m}};
(0,-19)*{\scriptstyle \delta};
(0,-4)*{\blue\scriptstyle{-1}};
(2.2,4)*{\blue\bullet};
(6,6)*{\blue\scriptstyle{s-1}};
(-3.6,4)*{\blue\bullet};
(-5.5,5.5)*{\blue\scriptstyle{t}};
\endxy.
\end{equation}  
Now slide the $m$-bubble to the left. Note that the strand directly to the left of the bubble has color $m+1$ (we keep 
considering the colors modulo $n$). Therefore, by the bubble slide 
relations and the degree zero bubble relations in~\cite{MTh}, we get 
\begin{equation}
\label{eq:bubblemove}
\xy
(0,0)*{\figs{0.25}{Polidigonbubble}};
(12,0)*{(1^n)};
(-6,-4)*{\blue\scriptstyle{i}};
(0,0)*{\blue\scriptstyle{m}};
(0,-19)*{\scriptstyle \delta};
(0,-4)*{\blue\scriptstyle{-1}};
(2.2,4)*{\blue\bullet};
(6,6)*{\blue\scriptstyle{s-1}};
(-3.6,4)*{\blue\bullet};
(-5.5,5.5)*{\blue\scriptstyle{t}};
\endxy
\;\;
=
\;\;
\xy
(0,0)*{\figs{0.25}{Polidigonopen}};
(12,0)*{(1^n)};
(-6,-4)*{\blue\scriptstyle{i}};
(0,-19)*{\scriptstyle \delta};
(-2.2,4)*{\blue\bullet};
(6,6)*{\blue\scriptstyle{s-1}};
(2.2,4)*{\blue\bullet};
(-3.6,4)*{\blue\bullet};
(-5.5,5.5)*{\blue\scriptstyle{t}};
\endxy
\;\;
-
\;\;
\xy
(0,0)*{\figs{0.25}{Polidigonbubblemove}};
(12,0)*{(1^n)};
(-6,-4)*{\blue\scriptstyle{i}};
(0,-19)*{\scriptstyle \delta};
(6,6)*{\blue\scriptstyle{s-1}};
(1.8,4)*{\blue\bullet};
(-2,0)*{\blue\scriptstyle{m}};
(-3.6,4)*{\blue\bullet};
(-5.5,5.5)*{\blue\scriptstyle{t}};
\endxy
\end{equation}  
The new bubble, in the second diagram on the right-hand side of Equation~\eqref{eq:bubblemove}, still has color $m$ of course. But now it is in between 
the strands colored $m+2$ and $m+1$, reading from left to right. The label, 
$\lambda$, of the region between these two strands satisfies 
$\lambda_{m+1}=1$. Thus, by the degree zero bubble relations in~\cite{MTh}, 
the counter-clockwise degree zero $m$-bubble in that region is equal to 
one, which explains the 
positive sign of the first term on the right-hand side 
in~\eqref{eq:bubblemove}. Note that the label of the region 
containing the $m$-bubble in the second term satisfies 
$\lambda_m-\lambda_{m+1}=0$, so the dotless $m$-bubble has degree 2, 
as it should.  

Note that the $m$-bubble in the second term in~\eqref{eq:bubblemove} 
can be slid completely to the left-hand side. After that, 
we can use~\eqref{eq:polidigondotred} to eliminate the dot on the 
$m+1$th strand and slide the $m+1$-bubble completely to the 
left-hand side. Repeating this for 
all strands between $i-1$ and $m$, we get the following result
\begin{equation}
\label{eq:inductionstep}
\xy
(0,0)*{\figs{0.25}{DeltaEEDeltaCompl}};
(12,0)*{(1^n)};
(-6,-4)*{\blue\scriptstyle{i}};
(0,-19)*{\scriptstyle \delta};
(1.9,4)*{\blue\bullet};
(-3.6,4)*{\blue\bullet};
(-5.5,5.5)*{\blue\scriptstyle{t}};
(0,5)*{\blue\scriptstyle{s}};
\endxy
\;\;=\;\;
\xy
(0,0)*{\figs{0.25}{DeltaEEDeltaCompl}};
(12,0)*{(1^n)};
(-6,-4)*{\blue\scriptstyle{i}};
(0,-19)*{\scriptstyle \delta};
(1.9,4)*{\blue\bullet};
(-3.6,4)*{\blue\bullet};
(-7,5.5)*{\blue\scriptstyle{t+1}};
(0,2)*{\blue\scriptstyle{s-1}};
\endxy
\;-\;
\xy
(0,0)*{\figs{0.25}{DeltaEEDeltaCompl}};
(-6,-4)*{\blue\scriptstyle{i}};
(0,-19)*{\scriptstyle \delta};
(1.9,4)*{\blue\bullet};
(-3.6,4)*{\blue\bullet};
(-5.5,5.5)*{\blue\scriptstyle{t}};
(0,2)*{\blue\scriptstyle{s-1}};
(-43,0)*{\left({\blue \nccbub}+{\blue \nccbub}+\cdots+{\blue \nccbub} \ \ \right)};
(-59,4)*{\blue\scriptstyle{i-1}};
(-42,4)*{\blue\scriptstyle{i-2}};
(-16.5,4)*{\blue\scriptstyle{m}};
(10,0)*{(1^n)};
\endxy
\end{equation}
Induction then proves the result for $m\ne i+1$.
\vskip0.5cm
For $m=i+1$ we have to adapt our reasoning above, because 
the region between the $i+2$-th and the $i+1$-th strands has label 
$\lambda=(1^i,2,0,1^{n-(i+2)})$. In particular $\lambda_{i+1}=2$, so the left 
$i+1$-curl has degree four this time, which prevents us from using induction. 
Therefore, we use a slightly different argument involving a right curl. 

We still assume that $s>0$ holds. First note that, 
by the resolution of the curl and the degree zero bubble relations 
in~\cite{MTh}, we have 
\begin{equation}
\label{eq:rightcurlrel}
0
\;\;
=
\;\;
\xy
(0,0)*{\figs{0.25}{Polidigoncurlr}};
(12,0)*{(1^n)};
(-6,-4)*{\blue\scriptstyle{i}};
(1,-3.6)*{\blue\scriptstyle{i+2}};
(0,-19)*{\scriptstyle \delta};
(-3.6,4)*{\blue\bullet};
(-5.5,5.5)*{\blue\scriptstyle{t}};
(3.6,4)*{\blue\bullet};
(7,5.5)*{\blue\scriptstyle{s-1}};
\endxy
\;\;
=
\;\;
\xy
(0,0)*{\figs{0.25}{Polidigonopen}};
(12,0)*{(1^n)};
(-6,-4)*{\blue\scriptstyle{i}};
(0,-19)*{\scriptstyle \delta};
(2,4)*{\blue\bullet};
(0,-2)*{\blue\scriptstyle{i+2}};
(-3.6,4)*{\blue\bullet};
(-5.5,5.5)*{\blue\scriptstyle{t}};
(3.6,4)*{\blue\bullet};
(7,5.5)*{\blue\scriptstyle{s-1}};
\endxy
\;\;
-
\;\;
\xy
(0,0)*{\figs{0.25}{Polidigonbubble3}};
(12,0)*{(1^n)};
(-6,-4)*{\blue\scriptstyle{i}};
(0,-19)*{\scriptstyle \delta};
(1,-4)*{\blue\scriptstyle{-1}};
(-3.6,4)*{\blue\bullet};
(-5.5,5.5)*{\blue\scriptstyle{t}};
(3.6,4)*{\blue\bullet};
(7,5.5)*{\blue\scriptstyle{s-1}};
\endxy
\end{equation}
because the region between the $i+2$-th and the $i+1$-the strands 
is labeled $\lambda=(1^i,2,0,1^{n-(i+2)})$. In particular, 
we have $\lambda_{i+2}-\lambda_{i+3}=-1$ and $\lambda_{i+3}=1$, which explains 
the signs of the terms on the right-hand side of~\eqref{eq:rightcurlrel}. 

We now slide the $i+2$-bubble in the second term on the 
right-hand side of~\eqref{eq:rightcurlrel} to the right:
\begin{equation}
\label{eq:rightbubblemove}
\xy
(0,0)*{\figs{0.25}{Polidigonbubble3}};
(12,0)*{(1^n)};
(-6,-4)*{\blue\scriptstyle{i}};
(0,-19)*{\scriptstyle \delta};
(1,-4)*{\blue\scriptstyle{-1}};
(-3.6,4)*{\blue\bullet};
(-5.5,5.5)*{\blue\scriptstyle{t}};
(3.6,4)*{\blue\bullet};
(7,5.5)*{\blue\scriptstyle{s-1}};
\endxy  
\;\;
=
\;\;
\xy
(0,0)*{\figs{0.25}{Polidigonopen}};
(12,0)*{(1^n)};
(-6,-4)*{\blue\scriptstyle{i}};
(0,-19)*{\scriptstyle \delta};
(3.6,4)*{\blue\bullet};
(7,5.5)*{\blue\scriptstyle{s}};
(-3.6,4)*{\blue\bullet};
(-5.5,5.5)*{\blue\scriptstyle{t}};
\endxy
\;\;
+
\;\;
\xy
(0,0)*{\figs{0.25}{Polidigonbubble4}};
(16,0)*{(1^n)};
(-9,-4)*{\blue\scriptstyle{i}};
(0,-19)*{\scriptstyle \delta};
(10,-4)*{\blue\scriptstyle{i+2}};
(-6.5,4)*{\blue\bullet};
(-8.7,5.5)*{\blue\scriptstyle{t}};
(0.8,4)*{\blue\bullet};
(4.8,5.5)*{\blue\scriptstyle{s-1}};
\endxy  
\end{equation}
The sign of the first term on the right-hand side 
of~\eqref{eq:rightbubblemove} follows from the degree zero bubble relations 
in~\cite{MTh}.

Putting~\eqref{eq:rightcurlrel} and~\eqref{eq:rightbubblemove} together, we 
get
\begin{equation}
\label{eq:dotrighttoleft}
\xy
(0,0)*{\figs{0.25}{Polidigonopen}};
(12,0)*{(1^n)};
(-6,-4)*{\blue\scriptstyle{i}};
(0,-19)*{\scriptstyle \delta};
(-3.6,4)*{\blue\bullet};
(-5.5,5.5)*{\blue\scriptstyle{t}};
(3.6,4)*{\blue\bullet};
(7,5.5)*{\blue\scriptstyle{s}};
\endxy
\;\;
=
\;\;
\xy
(0,0)*{\figs{0.25}{Polidigonopen}};
(12,0)*{(1^n)};
(-6,-4)*{\blue\scriptstyle{i}};
(0,-19)*{\scriptstyle \delta};
(2,4)*{\blue\bullet};
(0,-2)*{\blue\scriptstyle{i+2}};
(-3.6,4)*{\blue\bullet};
(-5.5,5.5)*{\blue\scriptstyle{t}};
(3.6,4)*{\blue\bullet};
(8.5,5.5)*{\blue\scriptstyle{s-1}};
\endxy
\;\;
-
\;\;
\xy
(0,0)*{\figs{0.25}{Polidigonbubble4}};
(16,0)*{(1^n)};
(-9,-4)*{\blue\scriptstyle{i}};
(0,-19)*{\scriptstyle \delta};
(10,-4)*{\blue\scriptstyle{i+2}};
(-6.5,4)*{\blue\bullet};
(-8.7,5.5)*{\blue\scriptstyle{t}};
(0.8,4)*{\blue\bullet};
(4.8,5.5)*{\blue\scriptstyle{s-1}};
\endxy.  
\end{equation}
We can exchange the $i+2$-bubble on the right-hand side 
for an $i+1$-bubble on the left-hand side by 
Lemma~\ref{lem:polidigonreswithoutdots}, and invert its orientation by 
the infinite Grassmannian relation.    

By the same reasoning as above, we get  
\begin{equation}
\label{eq:inductionstepright2}
\xy
(0,0)*{\figs{0.25}{Polidigonopen}};
(12,0)*{(1^n)};
(-6,-4)*{\blue\scriptstyle{i}};
(0,-19)*{\scriptstyle \delta};
(2,4)*{\blue\bullet};
(0,-2)*{\blue\scriptstyle{i+2}};
(-3.6,4)*{\blue\bullet};
(-5.5,5.5)*{\blue\scriptstyle{t}};
(3.6,4)*{\blue\bullet};
(8.5,5.5)*{\blue\scriptstyle{s-1}};
\endxy
\;\;
=
\;\;
\xy
(0,0)*{\figs{0.25}{Polidigonopen}};
(12,0)*{(1^n)};
(-6,-4)*{\blue\scriptstyle{i}};
(0,-19)*{\scriptstyle \delta};
(-3.6,4)*{\blue\bullet};
(-5.5,5.5)*{\blue\scriptstyle{t+1}};
(3.6,4)*{\blue\bullet};
(7,5.5)*{\blue\scriptstyle{s-1}};
\endxy
\;\;
-
\;\;
\xy
(5,0)*{\figs{0.25}{DeltaEEDelta}};
(5,-17)*{\scriptstyle \delta};
(3.3,4)*{\blue\bullet};
(1,5.5)*{\blue\scriptstyle{t}};
(-35,0)*{\left({\blue \nccbub}+{\blue \nccbub}+\cdots+{\blue \nccbub} \ \quad \right)};
(-52,4)*{\blue\scriptstyle{i-1}};
(-35,4)*{\blue\scriptstyle{i-2}};
(-9,4)*{\blue\scriptstyle{i+2}};
(1.5,-4)*{\blue\scriptstyle{i}};
(6.7,4)*{\blue\bullet};
(10,5.5)*{\blue\scriptstyle{s-1}};
\endxy.
\end{equation}

Putting~\eqref{eq:dotrighttoleft} and~\eqref{eq:inductionstepright2} 
together, we obtain
\begin{equation}\label{eq:finalreduction}
\xy
(0,0)*{\figs{0.25}{Polidigonopen}};
(12,0)*{(1^n)};
(-6,-4)*{\blue\scriptstyle{i}};
(0,-19)*{\scriptstyle \delta};
(-3.6,4)*{\blue\bullet};
(-5.5,5.5)*{\blue\scriptstyle{t}};
(3.6,4)*{\blue\bullet};
(7,5.5)*{\blue\scriptstyle{s}};
\endxy
\;
=\;
\xy
(0,0)*{\figs{0.25}{Polidigonopen}};
(12,0)*{(1^n)};
(-6,-4)*{\blue\scriptstyle{i}};
(0,-19)*{\scriptstyle \delta};
(-3.6,4)*{\blue\bullet};
(-5.5,5.5)*{\blue\scriptstyle{t+1}};
(3.6,4)*{\blue\bullet};
(7,5.5)*{\blue\scriptstyle{s-1}};
\endxy
\;
-
\;
\xy
(5,0)*{\figs{0.25}{DeltaEEDelta}};
(5,-17)*{\scriptstyle \delta};
(3.3,4)*{\blue\bullet};
(1,5.5)*{\blue\scriptstyle{t}};
(-35,0)*{\left({\blue \nccbub}+{\blue \nccbub}+\cdots+{\blue \nccbub} \ \quad \right)};
(-52,4)*{\blue\scriptstyle{i-1}};
(-35,4)*{\blue\scriptstyle{i-2}};
(-9,4)*{\blue\scriptstyle{i+1}};
(1.5,-4)*{\blue\scriptstyle{i}};
(6.7,4)*{\blue\bullet};
(10,5.5)*{\blue\scriptstyle{s-1}};
\endxy.  
\end{equation}
As before, the result follows by induction. 
\end{proof}

\begin{prop}
\label{prop:polidigonreswithdots}

\begin{gather*}
\xy
(0,0)*{\figs{0.25}{DeltaEEDeltaCompl}};
(12,0)*{(1^n)};
(-6,-4)*{\blue\scriptstyle{i}};
(0,-19)*{\scriptstyle \delta};
(1.9,4)*{\blue\bullet};
(-3.6,4)*{\blue\bullet};
(3.6,4)*{\blue\bullet};
(-1.9,4)*{\blue\bullet};
(-6.5,5.5)*{\blue\scriptstyle{s_{i}}};
(7,5.5)*{\blue\scriptstyle{s_{i+1}}};
(-5,9)*{\blue\scriptstyle{s_{i-1}}};
(5,9)*{\blue\scriptstyle{s_{i+2}}};
\endxy
\;\;=\\
\sum_{j_{i-1}=0}^{s_{i-1}}\sum_{j_{i-2}=0}^{s_{i-2}}\cdots
\sum_{j_{i+1}=0}^{s_{i+1}}
\binom{s_{i-1}}{j_{i-1}}\cdots\binom{s_{i+1}}{j_{i+1}}
\;\;\xy
(-20,0)*{\blue\bbox{z_{i-1}^{j_{i-1}}\cdots 
z_{i+1}^{j_{i+1}}}};
(23,0)*{\black\xybox{(0,14);(0,-14); **\dir{-} ?(.5)*\dir{>}+
(2.3,0)*{\scriptstyle{}};}};
(23,-17)*{\scriptstyle \delta};
(0,0)*{\blue\cbub{s_{i}+\cdots+s_{i+1}-j_{i-1}-\cdots-j_{i+1}}{i}};
(28,0)*{(1^n)};
\endxy
\;=\;\\
\sum_{j_{i}=0}^{s_{i}}\sum_{j_{i-1}=0}^{s_{i-1}}\cdots
\sum_{j_{i+2}=0}^{s_{i+2}}
\binom{s_{i}}{j_{i}}\cdots\binom{s_{i+2}}{j_{i+2}}
\;\;\xy
(-20,0)*{\blue\bbox{y_{i}^{j_{i}}\cdots 
y_{i+2}^{j_{i+2}}}};
(-35,0)*{\black\xybox{(0,14);(0,-14); **\dir{-} ?(.5)*\dir{>}+
(2.3,0)*{\scriptstyle{}};}};
(-35,-17)*{\scriptstyle \delta};
(0,0)*{\blue\ccbub{s_{i}+\cdots+s_{i+1}-j_{i}-\cdots-j_{i+2}}{i+1\ \ \ \ }};
(28,0)*{(1^n)};
\endxy
\end{gather*}
\end{prop}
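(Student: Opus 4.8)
The plan is to prove the first equality by peeling the dots off the strands one at a time, exactly as in the proof of Lemma~\ref{lem:polidigonreswithdots}, and to obtain the second equality by the mirror-image argument, moving the dots toward the $(i+1)$-st strand via the recursion underlying~\eqref{eq:polidigonreswithdots2} instead of toward the $i$-th strand (equivalently, by cyclicity). Throughout I write $D(\dots,s_k,\dots)$ for the left-hand diagram carrying $s_k$ dots on the strand coloured $k$. The base case, in which only the $i$-th strand is dotted, is~\eqref{DeltaFEEDeltaF}, which converts the $s_i$ dots into the clockwise bubble of colour $i$.

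The heart of the matter is the observation that the one-dot recursion~\eqref{eq:inductionstep} (and, for $m=i+1$, its analogue~\eqref{eq:finalreduction}) continues to hold verbatim in the presence of dots on the remaining strands. Indeed, the derivation of~\eqref{eq:inductionstep} resolves a curl on the $m$-th strand and then slides the resulting bubbles leftward through the strands coloured $m+1,\dots,i-1,i$; every bubble and every transferred dot produced in this cascade lives in a region lying strictly to the left of the $m$-th strand. The computation is therefore confined to the strands between $m$ and $i$, and the dots sitting on the strands to the right of $m$ are never touched. Hence, treating those dots as spectators,
$$
D(\dots,s_i,\dots,s_m,\dots)=D(\dots,s_i+1,\dots,s_m-1,\dots)+z_m\,D(\dots,s_i,\dots,s_m-1,\dots),
$$
valid whenever the strands strictly between $m$ and $i$ carry no dots; this last condition is exactly what forces the coefficient of the lower-order term to be the full sum of bubbles defining $z_m$ rather than a single bubble.

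I would then peel the strands in the cyclic order $i-1,i-2,\dots,1,n,n-1,\dots,i+2,i+1$. This order guarantees that, at the moment strand $m$ is treated, every strand lying between $m$ and $i$ has already been emptied of dots, so the recursion above applies with the clean factor $z_m$. Resolving all $s_m$ dots on the $m$-th strand by iterating the recursion is precisely Pascal's rule and produces $\sum_{j_m=0}^{s_m}\binom{s_m}{j_m}z_m^{\,j_m}$, together with $s_m-j_m$ extra dots deposited on the $i$-th strand; the spectator dots to the right of $m$ are carried along unchanged, and the $z_m$-bubbles, once pushed into the leftmost region, commute with everything accumulated so far. Running through all $m\ne i$ multiplies in one binomial sum and one power $z_m^{\,j_m}$ per strand and leaves $s_i+\sum_{k\ne i}(s_k-j_k)$ dots on the $i$-th strand and none elsewhere; a final application of~\eqref{DeltaFEEDeltaF} turns these into the bubble of colour $i$ with $\sum_k s_k-\sum_{k\ne i}j_k$ dots, giving exactly the claimed sum.

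The step I expect to be the main obstacle is the locality claim of the second paragraph: one must verify that the curl resolution and bubble slides establishing~\eqref{eq:inductionstep} genuinely involve only the strands between $m$ and $i$, so that the recursion is insensitive to the spectator dots, and that the chosen peeling order never forces a bubble to cross a still-dotted strand. Once this is checked, the remainder is bookkeeping: repeated Pascal's rule and the commutation of the bubbles collected in the leftmost region. The one genuinely different case is the last strand $m=i+1$, where the left curl has degree four and one must instead invoke the right-curl recursion~\eqref{eq:finalreduction}; since $i+1$ is peeled last there are no spectators at that stage, so no extra locality argument is needed there, and the $(i+2)$-bubble arising in that computation is traded for an $(i+1)$-bubble via Lemma~\ref{lem:polidigonreswithoutdots} as in the proof of Lemma~\ref{lem:polidigonreswithdots}.
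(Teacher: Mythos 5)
Your proposal is correct and takes essentially the same approach as the paper: both reduce the statement to the one-dot transfer recursion established in the proof of Lemma~\ref{lem:polidigonreswithdots} (equation~\eqref{eq:inductiongeneralcase} in the paper, with the $m=i+1$ case handled via the right-curl variant) and then induct, the paper by reverse-lexicographic induction on the whole dot sequence and you by a fixed strand-by-strand peeling order with Pascal's rule, which amounts to the same bookkeeping. Your flagged locality concern is in fact harmless --- the curl resolutions and bubble slides in the cascade never interact with spectator dots, so the recursion holds even with dots on the intermediate strands --- and your peeling order sidesteps the issue in any case.
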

\begin{proof}
We only prove the first equation. The second can be proved by similar 
arguments. 

We use induction w.r.t. the reverse lexicographical ordering of 
the dot sequences 
$(s_i,\ldots,s_{i+1})$. The base of the induction, $s_i=\dots=s_{i+1}=0$, has 
been dealt with in Lemma~\ref{lem:polidigonreswithoutdots}.  

The case $s_{i-1}=\cdots=s_{i+1}=0$ has been dealt with in 
Lemma~\ref{lem:polidigonreswithdots}. Suppose there exists a 
$j\in\{i-1,\ldots,i+1\}$ with $s_j > 0$. The argument below works for 
arbitrary $j$, but let us assume that $j=i-1$ for simplicity. 

By the same arguments as used in the proof of Lemma~\ref{lem:polidigonreswithdots}, we get 
\begin{equation}
\label{eq:inductiongeneralcase}
\xy
(0,0)*{\figs{0.25}{DeltaEEDeltaCompl}};
(12,0)*{(1^n)};
(-6,-4)*{\blue\scriptstyle{i}};
(0,-19)*{\scriptstyle \delta};
(1.9,4)*{\blue\bullet};
(-3.6,4)*{\blue\bullet};
(3.6,4)*{\blue\bullet};
(-1.9,4)*{\blue\bullet};
(-6.5,5.5)*{\blue\scriptstyle{s_{i}}};
(7,5.5)*{\blue\scriptstyle{s_{i+1}}};
(-5,9)*{\blue\scriptstyle{s_{i-1}}};
(5,9)*{\blue\scriptstyle{s_{i+2}}};
\endxy
=
\xy
(0,0)*{\figs{0.25}{DeltaEEDeltaCompl}};
(12,0)*{(1^n)};
(-6,-4)*{\blue\scriptstyle{i}};
(0,-19)*{\scriptstyle \delta};
(1.9,4)*{\blue\bullet};
(-3.6,4)*{\blue\bullet};
(3.6,4)*{\blue\bullet};
(-1.9,4)*{\blue\bullet};
(-6.5,5.5)*{\blue\scriptstyle{s_{i}+1}};
(7,5.5)*{\blue\scriptstyle{s_{i+1}}};
(-5,9)*{\blue\scriptstyle{s_{i-1}-1}};
(5,9)*{\blue\scriptstyle{s_{i+2}}};
\endxy
\;-\;
\xy
(0,0)*{\figs{0.25}{DeltaEEDeltaCompl}};
(12,0)*{(1^n)};
(-6,-4)*{\blue\scriptstyle{i}};
(0,-19)*{\scriptstyle \delta};
(1.9,4)*{\blue\bullet};
(-3.6,4)*{\blue\bullet};
(3.6,4)*{\blue\bullet};
(-1.9,4)*{\blue\bullet};
(-6.5,5.5)*{\blue\scriptstyle{s_{i}}};
(7,5.5)*{\blue\scriptstyle{s_{i+1}}};
(-5,9)*{\blue\scriptstyle{s_{i-1}-1}};
(5,9)*{\blue\scriptstyle{s_{i+2}}};
(-13,0)*{\blue \nccbub};
(-18,5)*{\blue\scriptstyle{i-1}};
\endxy
\end{equation}
Induction on both terms on the right-hand side 
of~\eqref{eq:inductiongeneralcase} proves the proposition.  
\end{proof}

Proposition~\ref{prop:polidigonreswithdots} also allows us to derive 
two bubble slide formulas. The other two, for bubbles with the 
opposite orientation, can be obtained using the infinite Grassmannian 
relation and induction. Since we do not need them in this paper, 
we omit them.  
\begin{cor}
\label{cor:bubbleslides}
We have 
\begin{equation}
\label{eq:bubbleslides1}
\sum_{j=0}^{s}\binom{s}{j}
\xy
(0,0)*{\black\xybox{(0,14);(0,-14); **\dir{-} ?(.5)*\dir{>}+
(2.3,0)*{\scriptstyle{}};}};
(-1,-17)*{\scriptstyle \delta};
(-20,0)*{{\blue\cbub{s-j}{i}}\;{\blue\bbox{z_{i+1}^j}}};
(5,0)*{(1^n)};
\endxy
\;\;
=
\;\;
\xy
(0,0)*{\black\xybox{(0,14);(0,-14); **\dir{-} ?(.5)*\dir{>}+
(2.3,0)*{\scriptstyle{}};}};
(-1,-17)*{\scriptstyle \delta};
(10,0)*{\blue\ccbub{s}{i+1\ \ \ \ }};
(20,0)*{(1^n)};
\endxy
\end{equation}
and 
\begin{equation}
\label{eq:bubbleslides2}
\xy
(0,0)*{\black\xybox{(0,14);(0,-14); **\dir{-} ?(.5)*\dir{>}+
(2.3,0)*{\scriptstyle{}};}};
(-1,-17)*{\scriptstyle \delta};
(-7,0)*{(1^n)};
\endxy
\sum_{j=0}^{s}\binom{s}{j}
\xy
(10,0)*{{\blue\ccbub{s-j}{i+1\ \ \ \ }}\;{\blue\bbox{y_{i}^j}}};
\endxy
\;\;
=
\;\;
\xy
(13,0)*{\black\xybox{(0,14);(0,-14); **\dir{-} ?(.5)*\dir{>}+
(2.3,0)*{\scriptstyle{}};}};
(12,-17)*{\scriptstyle \delta};
(0,0)*{\blue\cbub{s}{i}};
(-12,0)*{(1^n)};
\endxy
\end{equation}
\end{cor}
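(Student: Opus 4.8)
The plan is to derive both identities directly from Proposition~\ref{prop:polidigonreswithdots} by specializing the dot multiplicities. The key observation is that the two right-hand sides displayed in that proposition are both equal to the same decorated poly-digon, hence equal to one another, for \emph{every} choice of the dot numbers $s_{i-1},s_i,s_{i+1},s_{i+2},\dots$ attached to the strands. So it suffices to pick, in each case, a specialization in which one of the two sums collapses to a single bubble while the other collapses to the sum appearing in the corollary.

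First I would prove \eqref{eq:bubbleslides1}. Set every dot number equal to zero except the one on the $(i+1)$-st strand, which I set equal to $s$. In the first right-hand side of Proposition~\ref{prop:polidigonreswithdots} the multi-sum runs over the indices $j_{i-1},j_{i-2},\dots,j_{i+1}$, i.e. over all colours \emph{except} $i$; all binomial factors $\binom{0}{j_m}$ force $j_m=0$ for $m\ne i+1$, and with the convention $0^0=1$ the factors $z_m^{0}$ disappear. What survives is exactly $\sum_{j=0}^{s}\binom{s}{j}$ times the product of the $z_{i+1}^{j}$ term and the clockwise $i$-bubble carrying $s-j$ dots, placed to the left of the $\delta$-strand; after commuting these disjoint floating bubbles this is the left-hand side of \eqref{eq:bubbleslides1}. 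In the second right-hand side the multi-sum runs over the colours \emph{except} $i+1$, so the surviving dot $s_{i+1}=s$ is not among its indices and every $j$ is forced to zero; the expression collapses to the single counter-clockwise $(i+1)$-bubble with $s$ dots placed to the right of the $\delta$-strand, which is the right-hand side of \eqref{eq:bubbleslides1}.

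Then I would prove \eqref{eq:bubbleslides2} by the symmetric specialization: set every dot number to zero except the one on the $i$-th strand, which I set to $s$. Now the roles are reversed. Since the first sum omits the index $i$, it collapses completely and leaves the clockwise $i$-bubble with $s$ dots to the left of the $\delta$-strand, the right-hand side of \eqref{eq:bubbleslides2}. Since the second sum is indexed by the colours except $i+1$ and therefore does contain the index $i$, it leaves the single-parameter sum $\sum_{j=0}^{s}\binom{s}{j}$ times $y_i^{j}$ and the counter-clockwise $(i+1)$-bubble with $s-j$ dots to the right of the $\delta$-strand; after reordering the disjoint bubbles this is the left-hand side of \eqref{eq:bubbleslides2}. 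Equating the two collapsed forms in each case yields the two slides.

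There is essentially no geometric difficulty here, since all of it is already contained in Proposition~\ref{prop:polidigonreswithdots}; the only points requiring care are purely combinatorial bookkeeping: checking which summation index is omitted in each of the two sums (the first omits $i$, the second omits $i+1$), using $\binom{0}{0}=1$ and $0^0=1$ so that the vanishing dot numbers trivialize all but one summation, and invoking the commutativity of disjoint floating bubbles on a fixed side of the $\delta$-strand to match the orderings in \eqref{eq:bubbleslides1} and \eqref{eq:bubbleslides2}. The remaining two slides, with reversed bubble orientations, would then follow as indicated from the infinite Grassmannian relation, but are not needed.
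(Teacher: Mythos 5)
Your proof is correct and follows essentially the same route as the paper: both arguments equate the two expansions of the same dotted poly-digon, specialized so that all dots sit on a single strand (the $(i+1)$-st strand for~\eqref{eq:bubbleslides1}, the $i$-th for~\eqref{eq:bubbleslides2}), so that one expansion collapses to a single bubble and the other to the binomial sum. The only cosmetic difference is that you invoke Proposition~\ref{prop:polidigonreswithdots} and kill the extra summation indices via $\binom{0}{0}=1$ and $0^0=1$, whereas the paper cites the two-parameter Lemma~\ref{lem:polidigonreswithdots} directly with $t=0,\,m=i+1$ and $s=0,\,m=i$.
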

\begin{proof}
These two bubble slide relations follow immediately from 
Lemma~\ref{lem:polidigonreswithdots}. For~\eqref{eq:bubbleslides1}, 
apply~\eqref{eq:polidigonreswithdots} and~\eqref{eq:polidigonreswithdots2} 
with $t=0, m=i+1$. For~\eqref{eq:bubbleslides1}, 
apply~\eqref{eq:polidigonreswithdots} and~\eqref{eq:polidigonreswithdots2} 
with $s=0, m=i$. 
\end{proof}
\section{Two useful $2$-functors}

\begin{defn}
\label{defn:Psi}
Let the $2$-functor $\Psi_{n,n}\colon \Ucataff^*\to \Scat(n,n)^*$ be 
defined just as $\Psi_{n,r}$ in Section 3.5.3 
in~\cite{MTh}, i.e. on objects and $1$-morphisms it is determined by 
\begin{eqnarray*}
\mu&\mapsto&\phi_{n,n}(\mu)=:\lambda\\
\mathcal{E}_{\ii}{\mathbf 1}_{\mu}&\mapsto&\mathcal{E}_{\ii}{\mathbf 1}_{\lambda}.
\end{eqnarray*}
By convention, we put $1_{*}:=0$. On $2$-morphisms it is determined 
by sending any diagram in $\Ucataff$ which is not a left cap or cup 
to the same diagram in $\Scat(n,n)$ and applying $\phi_{n,n}$ to 
the labels of the regions in the diagram. 
The images of the left caps and cups also have to 
be multiplied by certain signs. To be more precise, define 
\begin{equation}
\label{eq:signs}
\Ucapli_{i,\mu}\mapsto (-1)^{\lambda_{i+1}+1}\,\,\Ucapli_{i,\lambda}
\quad\mbox{and}\quad \Ucupli_{i,\mu}\mapsto (-1)^{\lambda_{i+1}}\,\,
\Ucupli_{i,\lambda}.
\end{equation}  
We define any diagram in $\Scat(n,n)$ to be equal to zero 
if it contains regions labeled $*$.
\end{defn}

Note that, unlike $\Psi_{n,r}$ for $n>r$, $\Psi_{n,n}$ is not 
essentially surjective. However, it still has the following useful 
property. 
\begin{lem}
\label{lem:full}
The $2$--functor $\Psi_{n,n}$ is full. 
\end{lem}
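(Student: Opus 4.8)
The plan is to show that $\Psi_{n,n}$ is full, i.e. that every $2$-morphism in $\Scat(n,n)$ between objects in the image of $\Psi_{n,n}$ is hit by some $2$-morphism in $\Ucataff^*$. The essential point is that $\Scat(n,n)$ is generated, as a $2$-category, by the generators inherited from $\Ucat(\hat{\mathfrak{sl}}_n)$ (the usual KL generators: dots, crossings, cups, caps for the colored strands) together with the \emph{new} $\delta$-generators introduced in Definition~\ref{defn:Scat}: the $\delta$-strands, the $\delta$-cups/caps of~\eqref{eq_biadjoint1}--\eqref{eq_biadjoint2}, and the mixed vertices $\DeltaEE_{\delta,\blue i}$, $\DeltaFF_{\delta,\blue i}$, $\EEDelta^{\delta,\blue i}$, $\FFDelta^{\delta,\blue i}$. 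The colored KL generators are manifestly in the image of $\Psi_{n,n}$, since $\Psi_{n,n}$ sends each non-left-cap/cup diagram to the same diagram (up to the sign convention~\eqref{eq:signs} on left caps/cups, which are invertible scalars and hence cause no trouble). So fullness reduces entirely to producing preimages for the new $\delta$-generators.

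The key observation is that the new $1$-morphisms $\cal E_{+\delta}{\mathbf 1}_n$ and $\cal E_{-\delta}{\mathbf 1}_n$ are \emph{not} in the image of $\Psi_{n,n}$ as objects (this is exactly why the functor is not essentially surjective), yet the \emph{relations} in Definition~\ref{defn:Scat} express every $\delta$-decorated $2$-morphism in terms of the ordinary colored diagrams. Concretely, I would argue that for any two $1$-morphisms $X, Y$ that \emph{are} in the image, a $2$-morphism $X \Rightarrow Y$ in $\Scat(n,n)$ can be written as a linear combination of diagrams, and any $\delta$-strand appearing inside such a diagram must enter and leave the diagram through the regions labeled $(1^n)$ (by the region-label constraint and by relations i)--ii) of Theorem~\ref{thm:Eextrarels} mirrored in the categorification). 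Since the source and target have no $\delta$-strands on their boundary, every $\delta$-strand in the diagram is part of a closed configuration or a configuration connecting only to the $\EEDelta/\DeltaEE$ vertices. The lemmas just proved — especially Lemma~\ref{lem:polidigonreswithdots} and Proposition~\ref{prop:polidigonreswithdots}, together with the biadjointness~\eqref{eq_biadjoint1}--\eqref{eq_biadjoint2} and the bubble relation~\eqref{deltabubbles} — allow one to \emph{resolve} any such internal $\delta$-strand, rewriting the diagram as a linear combination of diagrams containing only colored strands. The colored-strand diagrams are then in the image of $\Psi_{n,n}$.

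The cleanest way to organize this is the following. First I would reduce to the case where the $2$-morphism is a single diagram $D$ with no $\delta$-strands on the boundary. Using relations~\eqref{DeltaFEEDeltaF}, \eqref{EEDeltaEE}, \eqref{EEDeltaEEPerm} and the relations of Lemma~\ref{EEFDeltaEE}--\eqref{DeltaEEEDeltaE}, I would push all $\delta$-vertices together and use biadjointness to turn every maximal $\delta$-arc into a ``digon'' or ``circle'' configuration bounded by colored strands. Proposition~\ref{prop:polidigonreswithdots} then evaluates such configurations as explicit linear combinations of colored bubbles and dotted colored strands — i.e. diagrams with no $\delta$-content at all. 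The result is that $D$ equals a linear combination of purely colored diagrams $D'$, each of which is literally $\Psi_{n,n}(D')$ (the colored generators pull back to themselves, up to the invertible left-cap/cup signs). This establishes that the map $\Hom_{\Ucataff^*}(X,Y) \to \Hom_{\Scat(n,n)^*}(\Psi_{n,n}X, \Psi_{n,n}Y)$ is surjective.

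The main obstacle, and where I would spend the most care, is verifying that \emph{every} way a $\delta$-strand can appear in an interior of a diagram is covered by the resolution moves. In particular one must handle $\delta$-strands that interact with several colored strands simultaneously, and ensure that after resolving one $\delta$-strand no new uncontrolled $\delta$-configuration is created. The saving grace is the region-label restriction to $\Lambda(n,n)$: a $\delta$-strand can only live in a region labeled $(1^n)$, which severely constrains the local pictures — effectively every $\delta$-segment is pinned between two copies of the $(1^n)$ region, so it can always be isotoped into one of the standard digon shapes handled by Proposition~\ref{prop:polidigonreswithdots}. I would therefore structure the proof as an induction on the number of $\delta$-strand segments in $D$, with the proposition and the biadjoint relations providing the inductive step that strictly decreases this number, terminating in a purely colored diagram that is automatically in the image.
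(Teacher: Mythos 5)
Your proposal is correct and follows essentially the same route as the paper: reduce to removing interior $\delta$-strands from a diagram whose boundary is $\delta$-free, observe that such strands are either closed $\delta$-circles (emptied via the bubble-slide results of Corollary~\ref{cor:bubbleslides} and then deleted by~\eqref{deltabubbles}) or arcs joining a $\DeltaEE_{\delta,\blue j}$ to an $\EEDelta^{\delta,\blue i}$ (resp.\ $\DeltaFF$ to $\FFDelta$), which are resolved by~\eqref{EEDeltaEE} and~\eqref{EEDeltaEEPerm}. The paper's proof is exactly these two observations, so your induction on the number of $\delta$-segments is just a more explicit packaging of the same argument.
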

\begin{proof}
The proof follows from the following two observations, which show how 
to remove $\delta$-strands from diagrams in $\mathrm{HOM}_{\Scat(n,n)}
(\mathcal{E}_{\ii}1_{\lambda},\mathcal{E}_{\jj}1_{\lambda})$, for any signed sequences $\ii$ and 
$\jj$: 
\begin{itemize}
\item Closed $\delta$-diagrams always 
consist of disjoint $\delta$-circles. By Corollary~\ref{cor:bubbleslides} 
we can move any closed 
$i$-diagram, which is always equivalent to a linear combination of 
disjoint $i$-circles, from the interior to the exterior of a $\delta$-circle. 
By~\eqref{deltabubbles}, we can then remove 
the $\delta$-circles with empty interior. 
\item Any $\delta$-strand which is not part of a $\delta$-circle 
has to be part of a diagram obtained by glueing $\DeltaEE_{\delta,{\blue j}}$ 
on top of 
$\EEDelta^{\delta,{\blue i}}$ or $\DeltaFF_{\delta,{\blue i}}$ on top of 
$\FFDelta^{\delta,{\blue j}}$, for certain $1\leq i,j\leq n$. In both cases we can 
remove the $\delta$-strand by applying~\eqref{EEDeltaEE} 
or~\eqref{EEDeltaEEPerm}.  
\end{itemize}
\end{proof}

\begin{defn}
\label{defn:In}
We define the $2$-functor $\mathcal{I}_n\colon \Scat(n,n)\to \Scat(n+1,n)$ 
as follows: 
\begin{itemize}
\item on objects and $1$-morphisms use the map in 
Proposition~\ref{prop:iota}; 
\item on $2$-morphisms take the identity on all 
$i$-strands, for $1\leq i\leq n-1$, map 
all $n$-strands to two parallel strands labeled $n$ and $n+1$, e.g. 
$$
\begin{array}{ccc}
\xy
(-5,0)*{\blue\xybox{(0,10);(0,-10); **\dir{-} ?(.5)*\dir{>}+
(2.3,0)*{\scriptstyle{}};}};
(-6,-12)*{\scriptstyle\blue n};
(0,0)*{(\lambda)};
\endxy
\mapsto 
\xy
(-5,0)*{\blue\xybox{(0,10);(0,-10); **\dir{-} ?(.5)*\dir{>}+
(2.3,0)*{\scriptstyle{}};}};
(0,0)*{\blue\xybox{(0,10);(0,-10); **\dir{-} ?(.5)*\dir{>}+
(2.3,0)*{\scriptstyle{}};}};
(-6,-12)*{\scriptstyle\blue n};
(0,-12)*{\scriptstyle\blue n+1};
(5,0)*{(\lambda,0)};
\endxy
\qquad
&
\qquad
\xy
(-5,0)*{\blue\xybox{(0,10);(0,-10); **\dir{-} ?(.5)*\dir{<}+
(2.3,0)*{\scriptstyle{}};}};
(-6,-12)*{\scriptstyle\blue n};
(0,0)*{(\lambda)};
\endxy
\mapsto 
\xy
(-5,0)*{\blue\xybox{(0,10);(0,-10); **\dir{-} ?(.5)*\dir{<}+
(2.3,0)*{\scriptstyle{}};}};
(0,0)*{\blue\xybox{(0,10);(0,-10); **\dir{-} ?(.5)*\dir{<}+
(2.3,0)*{\scriptstyle{}};}};
(-6,-12)*{\scriptstyle\blue n+1};
(0,-12)*{\scriptstyle\blue n};
(5,0)*{(\lambda,0)};
\endxy, 
\end{array}
$$
map dots on $n$-strands to dots on the corresponding 
pairs of parallel strands as follows
$$
\begin{array}{ccc}
\xy
(-5,0)*{\blue\xybox{(0,10);(0,-10); **\dir{-} ?(.5)*\dir{>}+
(2.3,0)*{\scriptstyle{}};}};
(-6,-12)*{\scriptstyle\blue n};
(0,0)*{(\lambda)};
(-6,5)*{\blue \bullet}; 
\endxy
\mapsto 
\xy
(-5,0)*{\blue\xybox{(0,10);(0,-10); **\dir{-} ?(.5)*\dir{>}+
(2.3,0)*{\scriptstyle{}};}};
(0,0)*{\blue\xybox{(0,10);(0,-10); **\dir{-} ?(.5)*\dir{>}+
(2.3,0)*{\scriptstyle{}};}};
(-6,-12)*{\scriptstyle\blue n};
(0,-12)*{\scriptstyle\blue n+1};
(5,0)*{(\lambda,0)};
(-6,5)*{\blue \bullet}; 
\endxy
=
\xy
(-5,0)*{\blue\xybox{(0,10);(0,-10); **\dir{-} ?(.5)*\dir{>}+
(2.3,0)*{\scriptstyle{}};}};
(0,0)*{\blue\xybox{(0,10);(0,-10); **\dir{-} ?(.5)*\dir{>}+
(2.3,0)*{\scriptstyle{}};}};
(-6,-12)*{\scriptstyle\blue n};
(0,-12)*{\scriptstyle\blue n+1};
(5,0)*{(\lambda,0)};
(-1,5)*{\blue\bullet}; 
\endxy
\qquad
&
\qquad
\xy
(-5,0)*{\blue\xybox{(0,10);(0,-10); **\dir{-} ?(.5)*\dir{<}+
(2.3,0)*{\scriptstyle{}};}};
(-6,-12)*{\scriptstyle\blue n};
(0,0)*{(\lambda)};
(-6,5)*{\blue\bullet};
\endxy
\mapsto 
\xy
(-5,0)*{\blue\xybox{(0,10);(0,-10); **\dir{-} ?(.5)*\dir{<}+
(2.3,0)*{\scriptstyle{}};}};
(0,0)*{\blue\xybox{(0,10);(0,-10); **\dir{-} ?(.5)*\dir{<}+
(2.3,0)*{\scriptstyle{}};}};
(-6,-12)*{\scriptstyle\blue n+1};
(0,-12)*{\scriptstyle\blue n};
(5,0)*{(\lambda,0)};
(-6,5)*{\blue\bullet};
\endxy 
=
\xy
(-5,0)*{\blue\xybox{(0,10);(0,-10); **\dir{-} ?(.5)*\dir{<}+
(2.3,0)*{\scriptstyle{}};}};
(0,0)*{\blue\xybox{(0,10);(0,-10); **\dir{-} ?(.5)*\dir{<}+
(2.3,0)*{\scriptstyle{}};}};
(-6,-12)*{\scriptstyle\blue n+1};
(0,-12)*{\scriptstyle\blue n};
(5,0)*{(\lambda,0)};
(-1,5)*{\blue\bullet};
\endxy, 
\end{array}
$$ 
and send the generators involving $\delta$-strands to   
$$
\begin{array}{cc}
\xy
(-5,0)*{\xybox{(0,10);(0,-10); **\dir{-} ?(.5)*\dir{>}+
(2.3,0)*{\scriptstyle{}};}};
(-6,-12)*{\scriptstyle \delta};
(0,0)*{(1^n)};
\endxy
\mapsto 
\xy
(-5,0)*{\blue\xybox{(0,10);(0,-10); **\dir{-} ?(.5)*\dir{>}+
(2.3,0)*{\scriptstyle{}};}};
(0,0)*{\blue\xybox{(0,10);(0,-10); **\dir{-} ?(.5)*\dir{>}+
(2.3,0)*{\scriptstyle{}};}};
(5,0)*{\blue\cdots};
(10,0)*{\blue\xybox{(0,10);(0,-10); **\dir{-} ?(.5)*\dir{>}+
(2.3,0)*{\scriptstyle{}};}};
(15,0)*{\blue\xybox{(0,10);(0,-10); **\dir{-} ?(.5)*\dir{>}+
(2.3,0)*{\scriptstyle{}};}};
(-6,-12)*{\scriptstyle\blue n};
(-1,-12)*{\scriptstyle\blue n-1};
(9,-12)*{\scriptstyle\blue 1};
(14,-12)*{\scriptstyle\blue n+1};
(20,0)*{(1^n)};
\endxy
\qquad
&
\qquad
\xy
(-5,0)*{\xybox{(0,10);(0,-10); **\dir{-} ?(.5)*\dir{<}+
(2.3,0)*{\scriptstyle{}};}};
(-6,-12)*{\scriptstyle \delta};
(0,0)*{(1^n)};
\endxy
\mapsto 
\xy
(-5,0)*{\blue\xybox{(0,10);(0,-10); **\dir{-} ?(.5)*\dir{<}+
(2.3,0)*{\scriptstyle{}};}};
(0,0)*{\blue\xybox{(0,10);(0,-10); **\dir{-} ?(.5)*\dir{<}+
(2.3,0)*{\scriptstyle{}};}};
(5,0)*{\blue\cdots};
(10,0)*{\blue\xybox{(0,10);(0,-10); **\dir{-} ?(.5)*\dir{<}+
(2.3,0)*{\scriptstyle{}};}};
(15,0)*{\blue\xybox{(0,10);(0,-10); **\dir{-} ?(.5)*\dir{<}+
(2.3,0)*{\scriptstyle{}};}};
(-6,-12)*{\scriptstyle\blue n+1};
(-1,-12)*{\scriptstyle\blue 1};
(9,-12)*{\scriptstyle\blue n-1};
(14,-12)*{\scriptstyle\blue n};
(20,0)*{(1^n)};
\endxy
\end{array}
$$

\begin{gather*}
\xy
(-10,17)*{};
(-12,14)*{\scriptstyle\blue i};
(-8,14)*{\scriptstyle\blue i-1};
(-2,14)*{\scriptstyle\blue 1};
(2,14)*{\scriptstyle\blue n};
(7,14)*{\scriptstyle\blue i+2};
(13,14)*{\scriptstyle\blue i+1};
(0,-15)*{\scriptstyle\delta};
(7,0)*{(1^n)};
(0,0)*{\figs{0.27}{DeltaEE}};
\endxy
\mapsto 
\xy
(-5,0)*{\figs{0.27}{ImageDeltaEE}};
(-25,-15)*{\scriptstyle\blue n};
(-19,-15)*{\scriptstyle\blue n-1};
(-8,-15)*{\scriptstyle\blue i+1};
(-2,-15)*{\scriptstyle\blue i};
(9,-15)*{\scriptstyle\blue 1};
(16,-15)*{\scriptstyle\blue n+1};
(-25,15)*{\scriptstyle\blue i};
(-14,15)*{\scriptstyle\blue 1};
(-8,15)*{\scriptstyle\blue n};
(-2.5,15)*{\scriptstyle\blue n+1};
(4.5,15)*{\scriptstyle\blue n-1};
(16,15)*{\scriptstyle\blue i+1};
(18,0)*{(1^n)};
\endxy
\\ 
\xy
(-10,17)*{};
(-12,14)*{\scriptstyle\blue i};
(-8,14)*{\scriptstyle\blue i+1};
(-2,14)*{\scriptstyle\blue n};
(2,14)*{\scriptstyle\blue 1};
(7,14)*{\scriptstyle\blue i-2};
(13,14)*{\scriptstyle\blue i-1};
(0,-15)*{\scriptstyle\delta};
(7,0)*{(1^n)};
(0,0)*{\figs{0.27}{DeltaFF}};
\endxy
\mapsto 
\xy
(-5,0)*{\figs{0.27}{ImageDeltaFF}};
(-25,-15)*{\scriptstyle\blue n+1};
(-19,-15)*{\scriptstyle\blue 1};
(-8,-15)*{\scriptstyle\blue i-1};
(-2,-15)*{\scriptstyle\blue i};
(9,-15)*{\scriptstyle\blue n-1};
(16,-15)*{\scriptstyle\blue n};
(-25,15)*{\scriptstyle\blue i};
(-14,15)*{\scriptstyle\blue n-1};
(-8,15)*{\scriptstyle\blue n+1};
(-2.5,15)*{\scriptstyle\blue n};
(4.5,15)*{\scriptstyle\blue 1};
(16,15)*{\scriptstyle\blue i-1};
(18,0)*{(1^n)};
\endxy
\end{gather*}
with the image of the other two $\delta$-splitters being defined likewise 
using cyclicity. 
\end{itemize}
\end{defn}
\noindent Note that the two images of the dotted $n$-strands which are 
shown, are indeed equal in $\hat{\mathcal S}(n+1,n)$. 
This follows from the relevant Reidemeister 2 relations, because the diagrams 
with the crossings in those relations are equal to zero 
(the last entry of the labels of their middle regions is equal to $-1$). 

\begin{lem}
\label{lem:Iwell-defined}
For any $n\geq 3$, $\mathcal{I}_n$ is well-defined.
\end{lem}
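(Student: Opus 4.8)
The plan is to check that $\mathcal{I}_n$ carries every defining relation of $\Scat(n,n)$ to a valid identity in $\Scat(n+1,n)$, so that the prescription on generators extends to a $2$-functor; since $\Scat(n+1,n)$ is already known to be a well-defined $2$-category (it is the Schur quotient studied in~\cite{MTh} for $n+1>n$), we may use all of its relations freely. I would organize the defining relations of $\Scat(n,n)$ into three families: the Khovanov--Lauda relations supported on the colours $1,\dots,n-1$, the Khovanov--Lauda relations involving the colour $n$, and the new $\delta$-relations \eqref{eq_biadjoint1}--\eqref{EEDeltaEEPerm}.

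For the first family, $\mathcal{I}_n$ acts as the identity on the relevant strands and merely relabels each region $\lambda\in\Lambda(n,n)$ by $(\lambda,0)\in\Lambda(n+1,n)$. These are literally the same relations in $\Scat(n+1,n)$, so there is nothing to verify beyond the facts that the new labels still lie in $\Lambda(n+1,n)$ and that forbidden labels are sent to forbidden labels. For the second family, $\mathcal{I}_n$ replaces each $n$-strand by a parallel pair coloured $n$ and $n+1$. The key mechanism, already exploited in the remark preceding the lemma, is that any crossing whose middle region has last entry $-1$ vanishes in $\Scat(n+1,n)$, since such a label is not in $\Lambda(n+1,n)$. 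Using this repeatedly, I would show that the doubled pair behaves like a single strand: the dot-slide, Reidemeister-2, Reidemeister-3 and nilHecke relations for the colour $n$ all follow from the corresponding relations for the colours $n$ and $n+1$ in $\Scat(n+1,n)$ once the vanishing crossings are discarded. The same vanishing argument is what makes the two displayed images of the dotted $n$-strand coincide.

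The third family is the heart of the matter. Mapping both sides into $\Scat(n+1,n)$, the biadjointness relations \eqref{eq_biadjoint1} and \eqref{eq_biadjoint2} reduce to biadjointness of each component strand of the image of the $\delta$-strand; the bubble and invertibility relations \eqref{deltabubbles} and \eqref{deltainvert2} follow by collapsing the resulting nested circles through curl removals and the Khovanov--Lauda bubble evaluations; the straightening relation \eqref{DeltaFEEDeltaF} follows by pulling an $\mathcal{E}_i$-strand through the bundle; and the dot-migration relations \eqref{DeltaEEX}--\eqref{RXEEDelta} follow from ordinary dot-slides. Full cyclicity must also be checked to be consistent with the images of the four $\delta$-splitters, two of which are themselves defined by rotation.

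The main obstacle is the pair of relations \eqref{EEDeltaEE} and \eqref{EEDeltaEEPerm}: after applying $\mathcal{I}_n$ the two splitters unfold into long braids on the colours $1,\dots,n+1$, and one must pull the relevant $\mathcal{E}_j$-strand through the whole bundle by a sequence of Reidemeister moves, carefully tracking which intermediate crossings vanish because their region labels leave $\Lambda(n+1,n)$. I expect collapsing the fattened $\delta$-strand down to the right-hand sides of \eqref{EEDeltaEE} and \eqref{EEDeltaEEPerm} to be the only genuinely laborious step, with all the other relations reducing to short diagrammatic manipulations.
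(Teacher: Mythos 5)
Your overall strategy coincides with the paper's: sort the defining relations into those supported on colours $1,\dots,n-1$ (preserved tautologically), those involving the colour $n$ (where the doubled strand behaves like a single strand because every crossing whose intermediate region has last entry $-1$ vanishes in $\Scat(n+1,n)$), and the $\delta$-relations. That part of your plan is sound and is exactly how the paper proceeds.

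However, there is one concrete ingredient missing from your plan, and it is the part the paper spends the most effort on: the treatment of \emph{fake bubbles}. Several Khovanov--Lauda relations for the colour $n$ --- the curl resolution \eqref{eq:RtoBpreserved}, the same-colour Reidemeister~2 relation, and the infinite Grassmannian relation \eqref{eq:IGpreserved} itself --- are stated in terms of bubbles carrying $\spadesuit+a$ dots, which for small $a$ are not composites of generating $2$-morphisms but formal symbols defined through the Grassmannian relation. To check that $\mathcal{I}_n$ preserves these relations you must first \emph{define} the image of a fake $n$-bubble (the paper declares it to be a nested pair of an $n$- and an $(n+1)$-bubble with prescribed dots), then prove this definition is consistent with the functorial image when $\spadesuit+a\geq 0$ (a bubble-slide-and-reindex argument), and then verify that the images satisfy the infinite Grassmannian relation. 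None of this reduces to ``discarding vanishing crossings,'' and your proposal does not mention it. Relatedly, you locate the main difficulty in \eqref{EEDeltaEE} and \eqref{EEDeltaEEPerm}, but the paper disposes of those quickly by Reidemeister~2 and~3 moves; the genuinely laborious steps are the fake-bubble bookkeeping just described and the straightening argument for \eqref{DeltaFEEDeltaF}. With the fake-bubble analysis added, your outline matches the paper's proof.
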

\begin{proof}
We only have to prove that $\mathcal{I}_n$ preserves the relations 
involving $n$ and $\delta$-strands, because $\mathcal{I}_n$ clearly preserves 
all other relations. 

First consider the nilHecke relations which only 
involve $n$-strands. By cyclicity, we can assume that all strands are 
oriented upward. We give the proof of well-definedness w.r.t. one 
nilHecke relation in detail. The image of the left-hand side of 
\begin{equation}
\label{eq:nilHecke1}
\xy
(0,0)*{\blue\xybox{(0,0);(10,10); **\dir{-} ?(0)*\dir{<}+
(2.3,0)*{\scriptstyle{}};}};
(0,0)*{\blue\xybox{(0,10);(10,0); **\dir{-} ?(1)*\dir{>}+
(2.3,0)*{\scriptstyle{}};}};
(-3,3)*{\blue\bullet};
(-7,-7)*{\scriptstyle\blue n};
(5,-7)*{\scriptstyle\blue n};
(9,0)*{\lambda};
\endxy
\;
-
\;
\xy
(0,0)*{\blue\xybox{(0,0);(10,10); **\dir{-} ?(0)*\dir{<}+
(2.3,0)*{\scriptstyle{}};}};
(0,0)*{\blue\xybox{(0,10);(10,0); **\dir{-} ?(1)*\dir{>}+
(2.3,0)*{\scriptstyle{}};}};
(-7,-7)*{\scriptstyle\blue n};
(5,-7)*{\scriptstyle\blue n};
(3,-3)*{\blue\bullet};
(9,0)*{\lambda};
\endxy
\;
=
\;
\xy
(0,0)*{\blue\xybox{(0,0);(0,10); **\dir{-} ?(.5)*\dir{<}+
(2.3,0)*{\scriptstyle{}};}};
(10,0)*{\blue\xybox{(0,0);(0,10); **\dir{-} ?(.5)*\dir{<}+
(2.3,0)*{\scriptstyle{}};}};
(-1,-7)*{\scriptstyle\blue n};
(9,-7)*{\scriptstyle\blue n};
(14,0)*{\lambda};
\endxy 
\end{equation}
is given by 
$$
\xy
(0,0)*{\blue\xybox{(0,0);(10,10); **\dir{-} ?(0)*\dir{<}+
(2.3,0)*{\scriptstyle{}};}};
(0,0)*{\blue\xybox{(0,10);(10,0); **\dir{-} ?(1)*\dir{>}+
(2.3,0)*{\scriptstyle{}};}};
(-3,3)*{\blue\bullet};
(2.5,0)*{\blue\xybox{(0,0);(10,10); **\dir{-} ?(0)*\dir{<}+
(2.3,0)*{\scriptstyle{}};}};
(2.5,0)*{\blue\xybox{(0,10);(10,0); **\dir{-} ?(1)*\dir{>}+
(2.3,0)*{\scriptstyle{}};}};
(-7.5,-7)*{\scriptstyle\blue n};
(5,-7)*{\scriptstyle\blue n};
(-3,-7)*{\scriptstyle\blue n+1};
(9,-7)*{\scriptstyle\blue n+1};
(11.5,0)*{(\lambda,0)};
\endxy
\;
-
\;
\xy
(0,0)*{\blue\xybox{(0,0);(10,10); **\dir{-} ?(0)*\dir{<}+
(2.3,0)*{\scriptstyle{}};}};
(0,0)*{\blue\xybox{(0,10);(10,0); **\dir{-} ?(1)*\dir{>}+
(2.3,0)*{\scriptstyle{}};}};
(2.5,0)*{\blue\xybox{(0,0);(10,10); **\dir{-} ?(0)*\dir{<}+
(2.3,0)*{\scriptstyle{}};}};
(2.5,0)*{\blue\xybox{(0,10);(10,0); **\dir{-} ?(1)*\dir{>}+
(2.3,0)*{\scriptstyle{}};}};
(-7.5,-7)*{\scriptstyle\blue n};
(5,-7)*{\scriptstyle\blue n};
(-3,-7)*{\scriptstyle\blue n+1};
(9,-7)*{\scriptstyle\blue n+1};
(3,-3)*{\blue\bullet};
(11.5,0)*{(\lambda,0)};
\endxy.
$$ 
By the nilHecke relation for the $n$-strands, this is equal to 
$$
\xy
(0,0)*{\figs{0.15}{ImagenilHecke1}};
(-8.5,-9)*{\scriptstyle\blue n};
(2,-9)*{\scriptstyle\blue n};
(-4,-9)*{\scriptstyle\blue n+1};
(9,-9)*{\scriptstyle\blue n+1};
(11.5,0)*{(\lambda,0)};
\endxy
\;
=
\;
\xy
(0,0)*{\figs{0.15}{IdentitynilHecke1}};
(-7,-9)*{\scriptstyle\blue n};
(2.5,-9)*{\scriptstyle\blue n};
(-2.5,-9)*{\scriptstyle\blue n+1};
(7,-9)*{\scriptstyle\blue n+1};
(13,0)*{(\lambda,0)};
\endxy,
$$
which is equal to the image of the right-hand side of~\eqref{eq:nilHecke1}. 
Note that in the last equality we have omitted one term, which is equal to 
zero because it contains a region whose label has a negative entry. 

Well-definedness w.r.t. the other two nilHecke relations for $n$-strands 
can be proved by similar arguments.

As for the other relations involving only $n$-strands, the first one we should 
have a look at is the infinite Grassmannian relation. The image of the 
$n$-bubbles is given by 
\begin{gather*}
\xy
(0,0)*{\figs{0.15}{Circlecc}};
(-5,5.5)*{\scriptstyle\blue n};
(7.5,0)*{\lambda};
(2.5,-3)*{\blue \bullet};
(3,-6)*{\scriptstyle\blue \spadesuit + a};
\endxy
\;\mapsto\;
\xy
(0,0)*{\figs{0.15}{Doublecirclecc}};
(-7,8)*{\scriptstyle\blue n+1};
(-3,4)*{\scriptstyle\blue n};
(13,0)*{(\lambda,0)};
(1,-2)*{\blue\bullet};
(6,-2)*{\blue\bullet};
(1.5,-4)*{\scriptstyle\blue \spadesuit + a};
(8,-4)*{\scriptstyle\blue \spadesuit};
\endxy
\\
\xy
(0,0)*{\figs{0.15}{Circlec}};
(-5,5.5)*{\scriptstyle\blue n};
(7.5,0)*{\lambda};
(2.5,-3)*{\blue \bullet};
(3,-6)*{\scriptstyle\blue \spadesuit + a};
\endxy
\;\mapsto\;
\xy
(0,0)*{\figs{0.15}{Doublecirclec}};
(-7,8)*{\scriptstyle\blue n};
(-3,4)*{\scriptstyle\blue n+1};
(13,0)*{(\lambda,0)};
(1,-2)*{\blue\bullet};
(6,-2)*{\blue\bullet};
(1.5,-4)*{\scriptstyle\blue \spadesuit + a};
(8,-4)*{\scriptstyle\blue \spadesuit};
\endxy
\end{gather*}
for any $a\in\mathbb{N}$ and $\lambda\in\Lambda(n,n)$. 
The notation $\spadesuit$ is defined by 
$$
\xy
(0,0)*{\figs{0.15}{Circlecc}};
(-5,5.5)*{\scriptstyle\blue i};
(7.5,0)*{\lambda};
(2.5,-3)*{\blue \bullet};
(3,-6)*{\scriptstyle\blue \spadesuit + b};
\endxy
\;:=\;
\xy
(0,0)*{\figs{0.15}{Circlecc}};
(-5,5.5)*{\scriptstyle\blue i};
(7.5,0)*{\lambda};
(2.5,-3)*{\blue \bullet};
(3,-6)*{\scriptstyle\blue -(\lambda_i-\lambda_{i+1})-1 + b};
\endxy
\qquad
\xy
(0,0)*{\figs{0.15}{Circlec}};
(-5,5.5)*{\scriptstyle\blue i};
(7.5,0)*{\lambda};
(2.5,-3)*{\blue \bullet};
(3,-6)*{\scriptstyle\blue \spadesuit + b};
\endxy
\;:=\;
\xy
(0,0)*{\figs{0.15}{Circlec}};
(-5,5.5)*{\scriptstyle\blue i};
(7.5,0)*{\lambda};
(2.5,-3)*{\blue \bullet};
(3,-6)*{\scriptstyle\blue \lambda_i-\lambda_{i+1}-1 + b};
\endxy,
$$     
for any $b\in\mathbb{N}$. 

For $\spadesuit+a<0$, the image of the fake $n$-bubbles above is a 
definition. For $\spadesuit +a\geq 0$, we have to prove that the 
image of the $n$-bubbles above is equal to the image assigned to them 
by $\mathcal{I}_n$. This is immediate if the two nested bubbles 
in the image are real (since the numbers of dots match), but one of them 
could be fake, in which case a proof is required. 
Let us give this proof for the counter-clockwise $n$-bubbles. Note that
\begin{equation}
\label{eq:bubbleimagesequality}
\xy
(0,0)*{\figs{0.15}{Circlecc}};
(-5,5.5)*{\scriptstyle\blue n};
(7.5,0)*{\lambda};
(2.5,-3)*{\blue \bullet};
(3,-6)*{\scriptstyle\blue \spadesuit + a};
\endxy
\;=\;
\xy
(0,0)*{\figs{0.15}{Circlecc}};
(-5,5.5)*{\scriptstyle\blue n};
(7.5,0)*{\lambda};
(2.5,-3)*{\blue \bullet};
(3,-6)*{\scriptstyle\blue -(\lambda_n-\lambda_1)-1 + a};
\endxy.
\end{equation}
By the definition above, the image 
of the l.h.s. of~\eqref{eq:bubbleimagesequality} is given 
by 
$$
\xy
(0,0)*{\figs{0.15}{Doublecirclecc}};
(-7,8)*{\scriptstyle\blue n+1};
(-3,4)*{\scriptstyle\blue n};
(13,0)*{(\lambda,0)};
(1,-2)*{\blue\bullet};
(6,-2)*{\blue\bullet};
(1.5,-4)*{\scriptstyle\blue \spadesuit + a};
(8,-4)*{\scriptstyle\blue \spadesuit};
\endxy
\;=\;
-\sum_{b+c=a} 
\xy
(0,0)*{\figs{0.15}{Circlecc}};
(-5,5.5)*{\scriptstyle\blue n+1};
(2.5,-3)*{\blue \bullet};
(3,-6)*{\scriptstyle\blue \spadesuit + b};
\endxy
\xy
(0,0)*{\figs{0.15}{Circlecc}};
(-5,5.5)*{\scriptstyle\blue n};
(11,0)*{(\lambda,0)};
(2.5,-3)*{\blue \bullet};
(3,-6)*{\scriptstyle\blue \spadesuit + c};
\endxy.
$$
The equality is obtained by applying a bubble-slide relation. 
By the definition of $\mathcal{I}_n$, the image of the r.h.s. 
of~\eqref{eq:bubbleimagesequality} is given by 
\begin{eqnarray*}
\xy
(0,0)*{\figs{0.15}{Doublecirclecc}};
(-7,8)*{\scriptstyle\blue n+1};
(-3,4)*{\scriptstyle\blue n};
(13,0)*{(\lambda,0)};
(1,-2)*{\blue\bullet};
(1.5,-4)*{\scriptstyle\blue a'};
\endxy
&=&
-\sum_{b'+c=a'+\lambda_n} 
\xy
(0,0)*{\figs{0.15}{Circlecc}};
(-5,5.5)*{\scriptstyle\blue n+1};
(2.5,-3)*{\blue \bullet};
(3,-6)*{\scriptstyle\blue b'};
\endxy
\xy
(0,0)*{\figs{0.15}{Circlecc}};
(-5,5.5)*{\scriptstyle\blue n};
(11,0)*{(\lambda,0)};
(2.5,-3)*{\blue \bullet};
(3,-6)*{\scriptstyle\blue \spadesuit + c};
\endxy
\\
&=&
-\sum_{b'+c=a'+\lambda_n} 
\xy
(0,0)*{\figs{0.15}{Circlecc}};
(-5,5.5)*{\scriptstyle\blue n+1};
(2.5,-3)*{\blue \bullet};
(3,-6)*{\scriptstyle\blue \spadesuit + b'-\lambda_1+1};
\endxy
\xy
(0,0)*{\figs{0.15}{Circlecc}};
(-5,5.5)*{\scriptstyle\blue n};
(11,0)*{(\lambda,0)};
(2.5,-3)*{\blue \bullet};
(3,-6)*{\scriptstyle\blue \spadesuit + c};
\endxy\\
&=&
-\sum_{b+c=a} 
\xy
(0,0)*{\figs{0.15}{Circlecc}};
(-5,5.5)*{\scriptstyle\blue n+1};
(2.5,-3)*{\blue \bullet};
(3,-6)*{\scriptstyle\blue \spadesuit + b};
\endxy
\xy
(0,0)*{\figs{0.15}{Circlecc}};
(-5,5.5)*{\scriptstyle\blue n};
(11,0)*{(\lambda,0)};
(2.5,-3)*{\blue \bullet};
(3,-6)*{\scriptstyle\blue \spadesuit + c};
\endxy
\end{eqnarray*}
with $a'=-(\lambda_n-\lambda_1)-1+a$. The first equality is obtained by 
applying a bubble-slide relation, the other equalities are obtained 
by reindexing. This finishes the proof that both definitions of the 
image of the counter-clockwise non-fake $n$-bubbles are equal. 
The proof for the clockwise $n$-bubbles is similar and is left to the reader.  

We now show that with the definitions above, the images of the bubbles satisfy 
the infinite Grassmannian relation. To be more precise we have to show that 
the relation 
\begin{equation}
\label{eq:IGpreserved}
\sum_{a=0}^b 
\xy
(0,0)*{\figs{0.15}{Circlecc}};
(-5,5.5)*{\scriptstyle\blue n};
(2.5,-3)*{\blue \bullet};
(3,-6)*{\scriptstyle\blue \spadesuit + b-a};
\endxy
\xy
(0,0)*{\figs{0.15}{Circlec}};
(-5,5.5)*{\scriptstyle\blue n};
(2.5,-3)*{\blue \bullet};
(11,0)*{\lambda};
(3,-6)*{\scriptstyle\blue \spadesuit + a};
\endxy
\;=\;
-\delta_{b,0}
\end{equation}
is preserved, for any $b\in\mathbb{N}$. For $b=0$, the image 
of~\eqref{eq:IGpreserved} is given by 
$$
\xy
(0,0)*{\figs{0.15}{Doublecirclecc}};
(-7,8)*{\scriptstyle\blue n+1};
(-3,4)*{\scriptstyle\blue n};
(1,-2)*{\blue\bullet};
(6,-2)*{\blue\bullet};
(1.5,-4)*{\scriptstyle\blue \spadesuit};
(8,-4)*{\scriptstyle\blue \spadesuit};
\endxy
\;
\xy
(0,0)*{\figs{0.15}{Doublecirclec}};
(-7,8)*{\scriptstyle\blue n};
(-3,4)*{\scriptstyle\blue n+1};
(13,0)*{(\lambda,0)};
(1,-2)*{\blue\bullet};
(6,-2)*{\blue\bullet};
(1.5,-4)*{\scriptstyle\blue \spadesuit};
(8,-4)*{\scriptstyle\blue \spadesuit};
\endxy
\;=\;
-1.
$$
The equality follows immediately from the degree-zero bubble relations. 
For $b>0$, the image of~\eqref{eq:IGpreserved} is given by 
\begin{eqnarray*}
\sum_{a=0}^b
\xy
(0,0)*{\figs{0.15}{Doublecirclecc}};
(-7,8)*{\scriptstyle\blue n+1};
(-3,4)*{\scriptstyle\blue n};
(1,-2)*{\blue\bullet};
(6,-2)*{\blue\bullet};
(1.5,-4)*{\scriptstyle\blue \spadesuit+b-a};
(8,-4)*{\scriptstyle\blue \spadesuit};
\endxy
\;
\xy
(0,0)*{\figs{0.15}{Doublecirclec}};
(-7,8)*{\scriptstyle\blue n};
(-3,4)*{\scriptstyle\blue n+1};
(13,0)*{(\lambda,0)};
(1,-2)*{\blue\bullet};
(6,-2)*{\blue\bullet};
(1.5,-4)*{\scriptstyle\blue \spadesuit + a};
(8,-4)*{\scriptstyle\blue \spadesuit};
\endxy
&=&
\sum_{a=0}^b \sum_{k=0}^a
\xy
(0,0)*{\figs{0.15}{Doublecirclecc}};
(-7,8)*{\scriptstyle\blue n+1};
(-3,4)*{\scriptstyle\blue n};
(1,-2)*{\blue\bullet};
(6,-2)*{\blue\bullet};
(1.5,-4)*{\scriptstyle\blue \spadesuit+b-a};
(8,-4)*{\scriptstyle\blue \spadesuit};
\endxy 
\xy
(0,0)*{\figs{0.15}{Circlec}};
(-5,5.5)*{\scriptstyle\blue n+1};
(2.5,-3)*{\blue \bullet};
(3,-6)*{\scriptstyle\blue \spadesuit + k};
\endxy
\xy
(0,0)*{\figs{0.15}{Circlec}};
(-5,5.5)*{\scriptstyle\blue n};
(11,0)*{(\lambda,0)};
(2.5,-3)*{\blue \bullet};
(3,-6)*{\scriptstyle\blue \spadesuit + a-k};
\endxy
\\ \\
&=&
-\sum_{a=0}^b\sum_{k=0}^a\sum_{\ell=0}^{a-k}
\xy
(0,0)*{\figs{0.15}{CCE}};
(-11.5,5)*{\scriptstyle\blue n};
(2,5)*{\scriptstyle\blue n};
(-16,9)*{\scriptstyle\blue n+1};
(-4,-3)*{\blue \bullet};
(9.5,-3)*{\blue \bullet};
(10,-8)*{\blue \bullet};
(-2,-5)*{\scriptstyle\blue \spadesuit + b-a};
(11.5,-5)*{\scriptstyle\blue \spadesuit + a-k-\ell};
(12.5,-10)*{\scriptstyle\blue \spadesuit + \ell};
\endxy
\xy
(0,0)*{\figs{0.15}{Circlec}};
(-5,5.5)*{\scriptstyle\blue n+1};
(11,0)*{(\lambda,0)};
(2.5,-3)*{\blue \bullet};
(3,-6)*{\scriptstyle\blue \spadesuit + k};
\endxy
\\ \\
&=&
-\sum_{a=0}^b\sum_{c=0}^a\sum_{k=0}^{c}
\xy
(0,0)*{\figs{0.15}{CCE}};
(-11.5,5)*{\scriptstyle\blue n};
(2,5)*{\scriptstyle\blue n};
(-16,9)*{\scriptstyle\blue n+1};
(-4,-3)*{\blue \bullet};
(9.5,-3)*{\blue \bullet};
(10,-8)*{\blue \bullet};
(-2,-5)*{\scriptstyle\blue \spadesuit + b-a};
(11.5,-5)*{\scriptstyle\blue \spadesuit + a-c};
(12.5,-10)*{\scriptstyle\blue \spadesuit + c-k};
\endxy
\xy
(0,0)*{\figs{0.15}{Circlec}};
(-5,5.5)*{\scriptstyle\blue n+1};
(11,0)*{(\lambda,0)};
(2.5,-3)*{\blue \bullet};
(3,-6)*{\scriptstyle\blue \spadesuit + k};
\endxy
\\ \\
&=&
-\sum_{c=0}^b\sum_{k=0}^c\sum_{m=0}^{b-c}
\xy
(0,0)*{\figs{0.15}{CCE}};
(-11.5,5)*{\scriptstyle\blue n};
(2,5)*{\scriptstyle\blue n};
(-16,9)*{\scriptstyle\blue n+1};
(-4,-3)*{\blue \bullet};
(9.5,-3)*{\blue \bullet};
(10,-8)*{\blue \bullet};
(-2,-5)*{\scriptstyle\blue \spadesuit + b-c-m};
(11.5,-5)*{\scriptstyle\blue \spadesuit + m};
(12.5,-10)*{\scriptstyle\blue \spadesuit + c-k};
\endxy
\xy
(0,0)*{\figs{0.15}{Circlec}};
(-5,5.5)*{\scriptstyle\blue n+1};
(11,0)*{(\lambda,0)};
(2.5,-3)*{\blue \bullet};
(3,-6)*{\scriptstyle\blue \spadesuit + k};
\endxy
\\ \\
&=&
0.
\end{eqnarray*}
The first two equalities follow from bubble-slide relations. The next two 
equalities follow from reindexing, as indicated. The last equality follows 
from the infinite Grassmannian relation: for the $n$-bubbles if $b>c$ (with 
$c$ fixed), and for the $n+1$-bubbles if $b=c$.

Knowing the images of the fake bubbles allows us to prove the other 
relations involving only $n$-strands very easily. Let us do just one example, 
the other relations can be proved in a similar fashion. We show that 
$\mathcal{I}_n$ preserves the relation
\begin{equation}
\label{eq:RtoBpreserved}
\xy
(0,0)*{\figs{0.15}{Curl}};
(-4.5,-17)*{\scriptstyle\blue n};
(9,0)*{\lambda};
\endxy
\;=\;
-\sum_{f=0}^{\lambda_1-\lambda_n}
\xy
(0,0)*{\figs{0.15}{IdentityE}};
(0,0)*{\blue\bullet};
(8,0)*{\scriptstyle\blue \lambda_1-\lambda_n-f};
(0,-17)*{\scriptstyle\blue n};
\endxy
\xy
(0,0)*{\figs{0.15}{Circlec}};
(-5,5.5)*{\scriptstyle\blue n};
(9,0)*{\lambda};
(2.5,-3)*{\blue \bullet};
(3,-6)*{\scriptstyle\blue \spadesuit + f};
\endxy.
\end{equation} 
The image of the l.h.s. of~\eqref{eq:RtoBpreserved} is given by 
$$
\xy
(-0.2,0)*{\figs{0.15}{IdentityE}};
(4.6,0)*{\figs{0.15}{IdentityE}};
(7.7,0)*{\figs{0.15}{Doublecirclec}};
(-0.2,-17)*{\scriptstyle\blue n};
(4.6,-17)*{\scriptstyle\blue n+1};
(21,0)*{(\lambda,0)};
\endxy,
$$
which is equal to 
$$
-\sum_{f=0}^{\lambda_1-1}
\xy
(0,0)*{\figs{0.15}{Sepdoublecurl}};
(2,-1)*{\blue\bullet};
(-3.5,-3)*{\blue\bullet};
(-7,-17)*{\scriptstyle\blue n};
(-2.5,-17)*{\scriptstyle\blue n+1};
(-2.5,3.5)*{\scriptstyle\blue n+1};
(-3,-5.5)*{\scriptstyle\blue \lambda_1-1 - f};
(6,-2.5)*{\scriptstyle\blue \spadesuit + f};
(16,0)*{(\lambda,0)};
\endxy
\;=\;
-\sum_{f=0}^{\lambda_1-1}
\xy
(0,0)*{\figs{0.15}{IdentityE}};
(5,0)*{\figs{0.15}{IdentityE}};
(15,0)*{\figs{0.15}{Doublecirclec}};
(5,-3)*{\blue\bullet};
(15,-2)*{\blue\bullet};
(0,-17)*{\scriptstyle\blue n};
(5,-17)*{\scriptstyle\blue n+1};
(8,7)*{\scriptstyle\blue n};
(12,3.5)*{\scriptstyle\blue n+1};
(7,-5)*{\scriptstyle\blue \lambda_1-1 - f};
(19,-3.5)*{\scriptstyle\blue \spadesuit + f};
(28,0)*{(\lambda,0)};
\endxy
\;=\;
-\sum_{f=0}^{\lambda_1-\lambda_n}
\xy
(0,0)*{\figs{0.15}{IdentityE}};
(5,0)*{\figs{0.15}{IdentityE}};
(15,0)*{\figs{0.15}{Doublecirclec}};
(5,-3)*{\blue\bullet};
(15,-2)*{\blue\bullet};
(18,-6)*{\blue\bullet};
(0,-17)*{\scriptstyle\blue n};
(5,-17)*{\scriptstyle\blue n+1};
(8,7)*{\scriptstyle\blue n};
(12,3.5)*{\scriptstyle\blue n+1};
(7,-5)*{\scriptstyle\blue \lambda_1-\lambda_n - f};
(19,-3.5)*{\scriptstyle\blue \spadesuit + f};
(20,-7)*{\scriptstyle\blue \spadesuit};
(28,0)*{(\lambda,0)};
\endxy.
$$
The first summation is obtained by resolving the $n+1$-curl. The second 
summation can then be obtained by applying a Reidemeister 3 relation to 
the strands colored $n$, $n+1$ and $n$. Note that only the terms 
which are shown survive, the other ones are zero 
because they are given by diagrams which contain a region 
whose label has a negative entry. The last summation is obtained by 
first reindexing. Then an argument similar to the one we 
used below~\eqref{eq:bubbleimagesequality} 
ensures that the nested bubbles, before and after the equality, match 
and that the first $\lambda_n-1$ terms of the reindexed summation 
vanish (indeed in those terms, bubbles of negative degree appear, and those 
are always zero). This last expression is equal to the image of the 
r.h.s. of~\eqref{eq:RtoBpreserved}, which 
finishes our proof that $\mathcal{I}_n$ preserves~\eqref{eq:RtoBpreserved}.

Next let us have a look at the relations involving $i$-strands of more than 
one color. We just do one example in detail, the other 
relations can be proved in a similar fashion. Consider the relation
\begin{equation}
\label{eq:R2}
\xy
(0,0)*{\figs{0.15}{R2lhs}};
(-3,-9)*{\scriptstyle\blue n};
(3,-9)*{\scriptstyle\blue 1};
(5,0)*{\lambda};
\endxy
\;=\;
-
\xy
(0,0)*{\figs{0.15}{IdentityR22}};
(-2.2,0)*{\blue\bullet};
(-2,-9)*{\scriptstyle\blue n};
(2,-9)*{\scriptstyle\blue 1};
(5,0)*{\lambda};
\endxy
\;+\;
\xy
(0,0)*{\figs{0.15}{IdentityR22}};
(2.4,0)*{\blue\bullet};
(-2,-9)*{\scriptstyle\blue n};
(2,-9)*{\scriptstyle\blue 1};
(5,0)*{\lambda};
\endxy
\end{equation}
in $\Scat(n,n)$. The image of the term on the l.h.s. is given by 
$$
\xy
(0,0)*{\figs{0.15}{R2lhsimage}};
(-6,-9)*{\scriptstyle\blue n};
(-1,-9)*{\scriptstyle\blue n+1};
(5,-9)*{\scriptstyle\blue 1};
(8.5,0)*{(\lambda,0)};
\endxy
\;=\;
\xy
(0,0)*{\figs{0.15}{R2imageeq1}};
(-6,-9)*{\scriptstyle\blue n};
(-1,-9)*{\scriptstyle\blue n+1};
(5,-9)*{\scriptstyle\blue 1};
(8.5,0)*{(\lambda,0)};
\endxy
\;=\;
-
\xy
(0,0)*{\figs{0.15}{R2imageeq2}};
(-1.6,0)*{\blue\bullet};
(-5,-9)*{\scriptstyle\blue n};
(-1,-9)*{\scriptstyle\blue n+1};
(4.8,-9)*{\scriptstyle\blue 1};
(10.5,0)*{(\lambda,0)};
\endxy\;+\;
\xy
(0,0)*{\figs{0.15}{R2imageeq2}};
(4.5,0)*{\blue\bullet};
(-5,-9)*{\scriptstyle\blue n};
(-1,-9)*{\scriptstyle\blue n+1};
(4.8,-9)*{\scriptstyle\blue 1};
(10.5,0)*{(\lambda,0)};
\endxy.
$$
The first and the second equality follow from the Reidemeister 2 
relations in $\Scat(n+1,n)$. The linear combination at the end is 
exactly the image of the r.h.s. in~\eqref{eq:R2}, which proves 
that~\eqref{eq:R2} is preserved by $\mathcal{I}_n$.

Remains to be proved that $\mathcal{I}_n$ preserves 
the relations involving $\delta$-strands. 
For the relations~\eqref{eq_biadjoint1} and~\eqref{eq_biadjoint2} the proof 
follows immediately from the zig-zag relations for $i$-strands with 
$i=1,\ldots,n+1$. For the relations in~\eqref{deltabubbles} the proof 
follows immediately from the degree-zero $i$-bubble relations for 
$i=1,\ldots,n+1$. Let us explain the first relation in~\eqref{deltainvert2} 
in more detail, the second being similar. The image of 
$$
\xy   
    (0,0)*{\black\xybox{
    (0,-6.5);(0,6.5)**\dir{-} ?(.5)*\dir{<};}};
    (0,-9.5)*{\delta};
    (9,0)*{\black\xybox{
    (0,-6.5);(0,6.5)**\dir{-} ?(.5)*\dir{>};}};
    (15,0)*{(1^n)};
    (9,-9.5)*{\delta};
    \endxy
$$
is given by 
$$
\xy
(0,0)*{\figs{0.15}{DeltaDeltaimage}};
(-7,0)*{\blue\cdots};
(7,0)*{\blue\cdots};
(-15,-9)*{\scriptstyle\blue n+1};
(-10.5,-9)*{\scriptstyle\blue 1};
(-3,-9)*{\scriptstyle\blue n};
(3,-9)*{\scriptstyle\blue n};
(10.5,-9)*{\scriptstyle\blue 1};
(15,-9)*{\scriptstyle\blue n+1};
(-15,9)*{\scriptstyle\blue n+1};
(-10.5,9)*{\scriptstyle\blue 1};
(-3,9)*{\scriptstyle\blue n};
(3,9)*{\scriptstyle\blue n};
(10.5,9)*{\scriptstyle\blue 1};
(15,9)*{\scriptstyle\blue n+1};
(23,0)*{(1^n,0)};
\endxy
\;=\;
\xy
(0,0)*{\figs{0.15}{DeltaDeltaimageeq1}};
(-7,0)*{\blue\cdots};
(7,0)*{\blue\cdots};
(-15,-9)*{\scriptstyle\blue n+1};
(-10.5,-9)*{\scriptstyle\blue 1};
(-3,-9)*{\scriptstyle\blue n};
(3,-9)*{\scriptstyle\blue n};
(10.5,-9)*{\scriptstyle\blue 1};
(15,-9)*{\scriptstyle\blue n+1};
(-15,9)*{\scriptstyle\blue n+1};
(-10.5,9)*{\scriptstyle\blue 1};
(-3,9)*{\scriptstyle\blue n};
(3,9)*{\scriptstyle\blue n};
(10.5,9)*{\scriptstyle\blue 1};
(15,9)*{\scriptstyle\blue n+1};
(23,0)*{(1^n,0)};
\endxy
\;=\cdots=\;
\xy
(0,0)*{\figs{0.15}{DeltaDeltaimageeq2}};
(-6,-7)*{\blue\cdots};
(6,-7)*{\blue\cdots};
(-6,7)*{\blue\cdots};
(6,7)*{\blue\cdots};
(-15,-9)*{\scriptstyle\blue n+1};
(-10.5,-9)*{\scriptstyle\blue 1};
(-3,-9)*{\scriptstyle\blue n};
(3,-9)*{\scriptstyle\blue n};
(10.5,-9)*{\scriptstyle\blue 1};
(15,-9)*{\scriptstyle\blue n+1};
(-15,9)*{\scriptstyle\blue n+1};
(-10.5,9)*{\scriptstyle\blue 1};
(-3,9)*{\scriptstyle\blue n};
(3,9)*{\scriptstyle\blue n};
(10.5,9)*{\scriptstyle\blue 1};
(15,9)*{\scriptstyle\blue n+1};
(20,0)*{(1^n,0)};
\endxy,
$$
which is indeed equal to the image of 
$$
\xy
    (0,3)*{\black\bbpef{\black \delta}};
    (8,0)*{(1^n)};
    (-12,0)*{};(12,0)*{};
    (0,-3)*{\black\bbcef{}};
    (-12,0)*{};(12,0)*{}; 
\endxy.
$$
The equalities above are obtained by repeatedly applying 
Reidemeister 2 relations on the pairs of $i$-strands with 
$\lambda_i-\lambda_{i+1}=-1$ 
for all $i=1,\ldots,n+1$. Note that the terms with two $i$-crossings are all 
equal to zero, because they contain a region whose label has one negative 
entry, and that all bubbles in the other terms are of degree zero 
and equal to $-1$. 
 
The fact that relations~\eqref{DeltaEEX},~\eqref{DeltaEERX},~\eqref{LXEEDelta} 
and~\eqref{RXEEDelta} are preserved follows easily 
from applying Reidemeister 2 and 3 relations to the images of the 
terms on their left-hand side. The dots appear after applying the 
Reidemeister 2 relation involving the $i$ and $i+1$-strands. 

We prove the left relation in~\eqref{DeltaFEEDeltaF} for $1\leq i < n$. 
The proof for $i=n$ and the proof of the right relation 
in~\eqref{DeltaFEEDeltaF} are similar and are left to the reader. 
The image on the l.h.s. of the first relation in~\eqref{DeltaFEEDeltaF} 
is given by 
\begin{equation}
\label{eq:DeltaFEEDeltaFimage}
\xy
(0,0)*{\figs{0.22}{DeltaFEEDeltaFimage}};
(-24,-13.5)*{\scriptstyle\blue n};
(-18,-13.5)*{\scriptstyle\blue n-1};
(-7,-13.5)*{\scriptstyle\blue i+1};
(-3,-13.5)*{\scriptstyle\blue i};
(3,-13.5)*{\scriptstyle\blue i-1};
(13,-13.5)*{\scriptstyle\blue 1};
(18,-13.5)*{\scriptstyle\blue n+1};
(23,-13.5)*{\scriptstyle\blue i};
(-24,13.5)*{\scriptstyle\blue n};
(-18,13.5)*{\scriptstyle\blue n-1};
(-7,13.5)*{\scriptstyle\blue i+1};
(-3,13.5)*{\scriptstyle\blue i};
(3,13.5)*{\scriptstyle\blue i-1};
(13,13.5)*{\scriptstyle\blue 1};
(18,13.5)*{\scriptstyle\blue n+1};
(23,13.5)*{\scriptstyle\blue i};
(-30,0)*{(1^n,0)};
(-11,11)*{\blue\cdots};
(-11,-11)*{\blue\cdots};
(7,11)*{\blue\cdots};
(7,-11)*{\blue\cdots};
\endxy.
\end{equation}
We claim that this is equal to 
$$
\xy
(0,0)*{\figs{0.22}{DeltaFidentity}};
(-24,-13.5)*{\scriptstyle\blue n};
(-18,-13.5)*{\scriptstyle\blue n-1};
(-7,-13.5)*{\scriptstyle\blue i+1};
(-2,-13.5)*{\scriptstyle\blue i};
(3,-13.5)*{\scriptstyle\blue i-1};
(13,-13.5)*{\scriptstyle\blue 1};
(18,-13.5)*{\scriptstyle\blue n+1};
(23,-13.5)*{\scriptstyle\blue i};
(-24,13.5)*{\scriptstyle\blue n};
(-18,13.5)*{\scriptstyle\blue n-1};
(-7,13.5)*{\scriptstyle\blue i+1};
(-2,13.5)*{\scriptstyle\blue i};
(3,13.5)*{\scriptstyle\blue i-1};
(13,13.5)*{\scriptstyle\blue 1};
(18,13.5)*{\scriptstyle\blue n+1};
(23,13.5)*{\scriptstyle\blue i};
(-30,0)*{(1^n,0)};
(-12,10)*{\blue\cdots};
(-12,-10)*{\blue\cdots};
(8,10)*{\blue\cdots};
(8,-10)*{\blue\cdots};
\endxy,
$$
which is indeed the image of the r.h.s. of~\eqref{DeltaFEEDeltaF}. 
This follows from first applying Reidemeister 2 relations 
to~\eqref{eq:DeltaFEEDeltaFimage} in order to straighten all $j$-strands 
for $j\ne i$:  
$$
\xy
(0,0)*{\figs{0.22}{DeltaFimageeq1}};
(-24,-13.5)*{\scriptstyle\blue n};
(-18,-13.5)*{\scriptstyle\blue n-1};
(-7,-13.5)*{\scriptstyle\blue i+1};
(-2,-13.5)*{\scriptstyle\blue i};
(3,-13.5)*{\scriptstyle\blue i-1};
(13,-13.5)*{\scriptstyle\blue 1};
(18,-13.5)*{\scriptstyle\blue n+1};
(23,-13.5)*{\scriptstyle\blue i};
(-24,13.5)*{\scriptstyle\blue n};
(-18,13.5)*{\scriptstyle\blue n-1};
(-7,13.5)*{\scriptstyle\blue i+1};
(-2,13.5)*{\scriptstyle\blue i};
(3,13.5)*{\scriptstyle\blue i-1};
(13,13.5)*{\scriptstyle\blue 1};
(18,13.5)*{\scriptstyle\blue n+1};
(23,13.5)*{\scriptstyle\blue i};
(-30,0)*{(1^n,0)};
(-12,10)*{\blue\cdots};
(-12,-10)*{\blue\cdots};
(8,10)*{\blue\cdots};
(8,-10)*{\blue\cdots};
\endxy;
$$
then a Reidemeister 2 relation to the 
$i$-strands in the middle 
(note that the region at the top and the bottom between 
the $i$ and the $i-1$-strand is labeled 
$(1,\ldots,1,0,1,\ldots,1)$ with $0$ on the $i$-th position):
$$
\xy
(0,0)*{\figs{0.22}{DeltaFimageeq2}};
(-24,-13.5)*{\scriptstyle\blue n};
(-18,-13.5)*{\scriptstyle\blue n-1};
(-7,-13.5)*{\scriptstyle\blue i+1};
(-1,-13.5)*{\scriptstyle\blue i};
(3,-13.5)*{\scriptstyle\blue i-1};
(13,-13.5)*{\scriptstyle\blue 1};
(18,-13.5)*{\scriptstyle\blue n+1};
(23,-13.5)*{\scriptstyle\blue i};
(-24,13.5)*{\scriptstyle\blue n};
(-18,13.5)*{\scriptstyle\blue n-1};
(-7,13.5)*{\scriptstyle\blue i+1};
(-1,13.5)*{\scriptstyle\blue i};
(3,13.5)*{\scriptstyle\blue i-1};
(13,13.5)*{\scriptstyle\blue 1};
(18,13.5)*{\scriptstyle\blue n+1};
(23,13.5)*{\scriptstyle\blue i};
(-30,0)*{(1^n,0)};
(-12,10)*{\blue\cdots};
(-12,-10)*{\blue\cdots};
(8,10)*{\blue\cdots};
(8,-10)*{\blue\cdots};
\endxy;
$$
and finally Reidemeister 2 relations in order to straighten the 
downward $i$-strand.   

Finally, the fact that $\mathcal{I}_n$ preserves 
the relations~\eqref{EEDeltaEE} and~\eqref{EEDeltaEEPerm} can be easily 
proved by applying Reidemeister 2 and 3 relations to the images of the 
diagrams on the l.h.s. of those two relations. 
\end{proof}

\section{The Grothendieck group}
In this section we prove that $\Scat(n,n)$ categorifies $\hat{\SD}(n,n)$ 
(Theorem~\ref{thm:categorification}). All the hard work has been done already, 
we just have to put everything together.  
\begin{lem}
\label{lem:Extracatrels}
In $\Scat(n,n)$, we have  
\begin{enumerate}[i)]
\item\label{one}
$\mathcal{E}_{\pm\delta}{\mathbf 1}_{\lambda}\cong {\mathbf 1}_{\lambda}\mathcal{E}_{\pm\delta}\cong 0$ for all $\lambda\ne (1^n)$;
\item\label{two} $\mathcal{E}_{\pm\delta}{\mathbf 1}_n\cong {\mathbf 1}_n\mathcal{E}_{\pm\delta}$;
\item\label{three} $\mathcal{E}_{+\delta}\mathcal{E}_{-\delta}{\mathbf 1}_n\cong \mathcal{E}_{-\delta}\mathcal{E}_{+\delta}{\mathbf 1}_n\cong {\mathbf 1}_n$;
\item\label{four} $\mathcal{E}_i\mathcal{E}_{+\delta}{\mathbf 1}_n\cong \mathcal{E}_i^{(2)}\mathcal{E}_{i-1}\ldots \mathcal{E}_1\mathcal{E}_n\cdots \mathcal{E}_{i+1}{\mathbf 1}_n$;
\item\label{five} ${\mathbf 1}_n\mathcal{E}_{+\delta}\mathcal{E}_i\cong {\mathbf 1}_n\mathcal{E}_{i-1}\ldots \mathcal{E}_1\mathcal{E}_n\cdots \mathcal{E}_{i+1}\mathcal{E}_i^{(2)}$;
\item\label{six} $\mathcal{E}_{-i}\mathcal{E}_{+\delta}{\mathbf 1}_n\cong \mathcal{E}_{i-1}\cdots \mathcal{E}_1\mathcal{E}_n\cdots \mathcal{E}_{i+1}{\mathbf 1}_n$;
\item\label{seven} ${\mathbf 1}_n\mathcal{E}_{+\delta}\mathcal{E}_{-i}\cong {\mathbf 1}_n\mathcal{E}_{i-1}\cdots \mathcal{E}_1\mathcal{E}_n\cdots \mathcal{E}_{i+1}$;
\item\label{eight} $\mathcal{E}_{-i}\mathcal{E}_{-\delta}{\mathbf 1}_n\cong \mathcal{E}_{-i}^{(2)}\mathcal{E}_{-(i+1)}\cdots \mathcal{E}_{-n}\mathcal{E}_{-1}\cdots 
\mathcal{E}_{-(i-1)}{\mathbf 1}_n$;
\item\label{nine} ${\mathbf 1}_n\mathcal{E}_{-\delta}\mathcal{E}_{-i}\cong {\mathbf 1}_n\mathcal{E}_{-(i+1)}\cdots \mathcal{E}_{-n}\mathcal{E}_{-1}\cdots 
\mathcal{E}_{-(i-1)}\mathcal{E}_{-i}^{(2)}$;
\item\label{ten} $\mathcal{E}_i\mathcal{E}_{-\delta}{\mathbf 1}_n\cong \mathcal{E}_{-(i+1)}\cdots \mathcal{E}_{-n}\mathcal{E}_{-1}\cdots \mathcal{E}_{-(i-1)}{\mathbf 1}_n$;
\item\label{eleven} ${\mathbf 1}_n\mathcal{E}_{-\delta}\mathcal{E}_i\cong {\mathbf 1}_n\mathcal{E}_{-(i+1)}\cdots \mathcal{E}_{-n}\mathcal{E}_{-1}\cdots \mathcal{E}_{-(i-1)}$, 
\end{enumerate}
for any $i=1,\ldots,n$.
\end{lem}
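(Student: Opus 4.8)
The eleven isomorphisms categorify relations i)--xi) of Theorem~\ref{thm:Eextrarels}, so the plan is, for each one, to exhibit an explicit pair of mutually inverse $2$-morphisms in $\Scat(n,n)$ assembled from the generating $2$-morphisms of Definition~\ref{defn:Scat} and the $\mathcal{E}_i^{(2)}$-calculus of~\cite{KLMS}, and then to check that the two composites are identities using the relations already recorded above. The bulk of the argument is organised by cyclicity and by the biadjoint invertibility of $\mathcal{E}_{\pm\delta}$, which cut the number of genuinely distinct computations down to a handful.

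The first three items are essentially formal. Items~\ref{one} and~\ref{two} are immediate from the definition of $\Scat(n,n)$: the generators $\mathcal{E}_{\pm\delta}$ occur only in the sandwiched form ${\mathbf 1}_n\mathcal{E}_{\pm\delta}{\mathbf 1}_n$, so $\mathcal{E}_{\pm\delta}{\mathbf 1}_{\lambda}\cong{\mathbf 1}_{\lambda}\mathcal{E}_{\pm\delta}\cong 0$ whenever $\lambda\neq(1^n)$, and both reduce to ${\mathbf 1}_n\mathcal{E}_{\pm\delta}{\mathbf 1}_n$ when $\lambda=(1^n)$. For item~\ref{three} I would read the cups and caps of Definition~\ref{defn:Scat} as the unit and counit of a biadjunction between $\mathcal{E}_{+\delta}{\mathbf 1}_n$ and $\mathcal{E}_{-\delta}{\mathbf 1}_n$: relation~\eqref{deltabubbles} says the two circle composites equal $\mathrm{id}_{{\mathbf 1}_n}$, while~\eqref{deltainvert2} says the two digon composites equal the identity of $\mathcal{E}_{\pm\delta}\mathcal{E}_{\mp\delta}{\mathbf 1}_n$. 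Hence a suitable cup and cap are mutually inverse $2$-isomorphisms, giving $\mathcal{E}_{+\delta}\mathcal{E}_{-\delta}{\mathbf 1}_n\cong{\mathbf 1}_n\cong\mathcal{E}_{-\delta}\mathcal{E}_{+\delta}{\mathbf 1}_n$.

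The content is in items~\ref{four}--\ref{eleven}, where everything is built from the $\delta$-splitters $\DeltaEE_{\delta,\blue i}$ and their cyclic rotations. By cyclicity the ``left'' statements~\ref{five},~\ref{seven},~\ref{nine},~\ref{eleven} reduce to the ``right'' ones, and, using item~\ref{three}, the $\mathcal{E}_{-\delta}$-statements~\ref{eight}--\ref{eleven} reduce to the $\mathcal{E}_{+\delta}$-statements~\ref{four}--\ref{seven}. For the non-divided relations~\ref{six},~\ref{seven},~\ref{ten},~\ref{eleven} the candidate isomorphism is $\mathrm{id}\otimes\DeltaEE_{\delta,\blue i}$ (or a rotation) followed by the collapse of a neighbouring $\mathcal{E}_{-i}\mathcal{E}_i$ (or $\mathcal{E}_i\mathcal{E}_{-i}$) pair, and its invertibility is exactly relation~\eqref{EEFDeltaEE}: applying~\eqref{EEDeltaEE} produces two terms, one of which dies because it carries a bubble of degree $-2$ while the other is a degree-zero bubble evaluating to $\pm1$. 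For the divided-power relations~\ref{four},~\ref{five},~\ref{eight},~\ref{nine} the candidate is $\mathrm{id}_{\mathcal{E}_i}\otimes\DeltaEE_{\delta,\blue i}$ composed with the KLMS splitter $\mathcal{E}_i^2\to\mathcal{E}_i^{(2)}$, landing in $\mathcal{E}_i^{(2)}\mathcal{E}_{i-1}\cdots\mathcal{E}_{i+1}{\mathbf 1}_n$; here the two composites are shown to be identities using relation~\eqref{EDeltaEEEDelta} (respectively~\eqref{DeltaEEEDeltaE}) together with the relations (2.64)--(2.65) of~\cite{KLMS} presenting the $\mathcal{E}_i^{(2)}$ idempotent, while~\eqref{EEDeltaEEPerm} handles the interaction of two $\delta$-splitters at distinct colours when normalising.

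The main obstacle will be the divided-power bookkeeping in items~\ref{four},~\ref{five},~\ref{eight},~\ref{nine}. Prepending $\mathcal{E}_i$ to the image of the $\delta$-splitter produces $\mathcal{E}_i^2\cong(\mathcal{E}_i^{(2)})^{\oplus[2]}$, so one must verify that the $\delta$-splitter singles out precisely one of the two $\mathcal{E}_i^{(2)}$-summands, so that the target is a single copy of $\mathcal{E}_i^{(2)}\mathcal{E}_{i-1}\cdots\mathcal{E}_{i+1}{\mathbf 1}_n$ and the composite is the identity rather than an idempotent of the wrong rank. This is where relation~\eqref{EEDeltaEE}, with its antisymmetric dot difference, is decisive, since it matches the Demazure-type dot operator appearing in the KLMS presentation of the $\mathcal{E}_i^{(2)}$ projector. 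The remaining care is purely in the signs and in the degrees of the bubbles created when resolving curls, which must be tracked against the degree-zero-bubble normalisation fixed in~\cite{MTh}; once the correct $2$-morphisms are written down, each verification is a routine diagrammatic calculation.
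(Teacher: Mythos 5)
Your proposal is correct and follows essentially the same route as the paper: items i)--iii) from the definition together with~\eqref{deltabubbles} and~\eqref{deltainvert2}, items iv)--vii) from~\eqref{EDeltaEEEDelta},~\eqref{DeltaEEEDeltaE},~\eqref{DeltaFEEDeltaF} and~\eqref{EEFDeltaEE} (which already package the splitter/divided-power bookkeeping you worry about), and the remaining items by symmetry. The only small discrepancy is that the paper deduces viii)--xi) from iv)--vii) by biadjointness (taking adjoints of the exhibited $2$-isomorphisms) rather than via item~iii), but since you invoke the biadjoint invertibility of $\mathcal{E}_{\pm\delta}$ as an organising principle anyway, this does not affect the argument.
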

\begin{proof}
The isomorphisms in~\eqref{one} and~\eqref{two} are immediate. 

For~\eqref{three}, consider the $2$-morphisms 
\begin{eqnarray*}
\txt{\xy
    (0,0)*{\black\bbpef{\black \delta}};
    (9,0)*{(1^n)};
    \endxy}
\colon& {\mathbf 1}_n\to \mathcal{E}_{-\delta}\mathcal{E}_{+\delta}{\mathbf 1}_n\\
\txt{\xy
    (0,0)*{\black\bbcef{\black \delta}};
    (9,0)*{(1^n)};
       \endxy} 
\colon &\mathcal{E}_{-\delta}\mathcal{E}_{+\delta}{\mathbf 1}_n\to {\mathbf 1}_n\\
\txt{\xy
    (0,0)*{\black\bbpfe{\black \delta}};
    (9,0)*{(1^n)};
        \endxy}
\colon&   {\mathbf 1}_n\to \mathcal{E}_{+\delta}\mathcal{E}_{-\delta}{\mathbf 1}_n\\
\txt{\xy    
(0,0)*{\black\bbcfe{\black \delta}};
    (9,0)*{(1^n)};
        \endxy}
\colon& \mathcal{E}_{+\delta}\mathcal{E}_{-\delta}{\mathbf 1}_n\to {\mathbf 1}_n.
\end{eqnarray*}
Relations~\eqref{deltabubbles} and~\eqref{deltainvert2} show that 
these $2$-morphisms are $2$-isomorphisms. 

Similarly, the isomorphisms in~\eqref{four} and~\eqref{five} follow from 
the relations in~\eqref{EDeltaEEEDelta} and~\eqref{DeltaEEEDeltaE}, and the 
isomorphisms in~\eqref{six} and~\eqref{seven} follow from 
the relations in~\eqref{DeltaFEEDeltaF} and~\eqref{EEFDeltaEE}.

The isomorphisms in~\eqref{eight}--\eqref{eleven} follow from the ones above 
by biadjointness. 
\end{proof}

Recall that $\mathrm{END}(X)$ denotes the ring generated by all homogeneous 
$2$-endomorphisms of a given $1$-morphism $X$, 
whereas $\mathrm{End}(X)\subset \mathrm{END}(X)$ only contains 
the ones of degree zero.  
\begin{lem}
\label{lem:END}
For any $t\in\mathbb{Z}$, 
$$
\mathrm{END}(\mathcal{E}_{+\delta}^t {\mathbf 1}_n)\cong 
1_{\mathcal{E}_{+\delta}^t}\mathrm{END}({\mathbf 1}_n)\cong \mathrm{END}({\mathbf 1}_n)1_{\mathcal{E}_{+\delta}^t}
$$
\end{lem}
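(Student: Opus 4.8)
The plan is to reduce the statement to the single fact that $\mathcal{E}_{+\delta}\mathbf{1}_n$ is an \emph{invertible} $1$-endomorphism of the object $(1^n)$, and then invoke the standard principle that an invertible $1$-morphism induces an equivalence of hom-categories, hence an isomorphism of $2$-endomorphism rings. The multiplicativity of the resulting map will come for free from the interchange law.

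First I would record the invertibility. By Lemma~\ref{lem:Extracatrels}\eqref{three} we have $\mathcal{E}_{+\delta}\mathcal{E}_{-\delta}\mathbf{1}_n\cong\mathbf{1}_n\cong\mathcal{E}_{-\delta}\mathcal{E}_{+\delta}\mathbf{1}_n$, the $2$-isomorphisms being the cups and caps of~\eqref{eq_biadjoint1} and~\eqref{eq_biadjoint2}, which are mutually inverse by~\eqref{deltabubbles} and~\eqref{deltainvert2}. Thus $\mathcal{E}_{+\delta}\mathbf{1}_n$ is invertible with inverse $\mathcal{E}_{-\delta}\mathbf{1}_n$, and since all these $2$-isomorphisms have degree $0$, iterating them $t$ times shows that $\mathcal{E}_{+\delta}^t\mathbf{1}_n$ is invertible for every $t$, with inverse $\mathcal{E}_{+\delta}^{-t}\mathbf{1}_n:=\mathcal{E}_{-\delta}^{t}\mathbf{1}_n$ (the case $t<0$ being symmetric and $t=0$ trivial); the corresponding $2$-isomorphisms are built by whiskering the basic cups and caps and are again of degree $0$.

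Next I would consider the $\bQ$-linear category $\mathcal{C}$ whose objects are the $1$-endomorphisms of $(1^n)$ and whose morphism space from $X$ to $Y$ is the graded space $\mathrm{HOM}_{\Scat(n,n)}(X,Y)$ of all $2$-morphisms, together with the left-whiskering endofunctor $L:=(\mathcal{E}_{+\delta}^t\mathbf{1}_n)\circ(-)$. Invertibility of $\mathcal{E}_{+\delta}^t\mathbf{1}_n$ makes $L$ an equivalence, with quasi-inverse $(\mathcal{E}_{+\delta}^{-t}\mathbf{1}_n)\circ(-)$; the unit and counit are obtained by horizontally composing the degree-$0$ isomorphisms of the previous step with identities. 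In particular $L$ is fully faithful, so on the morphism spaces from $\mathbf{1}_n$ to itself it restricts to a bijection
$$L\colon \mathrm{END}(\mathbf{1}_n)\ \xrightarrow{\ \sim\ }\ \mathrm{END}(\mathcal{E}_{+\delta}^t\mathbf{1}_n),\qquad \phi\mapsto 1_{\mathcal{E}_{+\delta}^t}\ast\phi,$$
because $L(\mathbf{1}_n)=\mathcal{E}_{+\delta}^t\mathbf{1}_n$. By the interchange law this map is unital and multiplicative, and since $1_{\mathcal{E}_{+\delta}^t}$ has degree $0$ it preserves the grading; hence it is an isomorphism of graded rings. As its image is by definition $1_{\mathcal{E}_{+\delta}^t}\mathrm{END}(\mathbf{1}_n)$, surjectivity yields $\mathrm{END}(\mathcal{E}_{+\delta}^t\mathbf{1}_n)=1_{\mathcal{E}_{+\delta}^t}\mathrm{END}(\mathbf{1}_n)$ and injectivity the first claimed isomorphism. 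Running the identical argument with the right-whiskering equivalence $(-)\circ(\mathcal{E}_{+\delta}^t\mathbf{1}_n)$, $\phi\mapsto\phi\ast 1_{\mathcal{E}_{+\delta}^t}$, gives the second isomorphism with $\mathrm{END}(\mathbf{1}_n)1_{\mathcal{E}_{+\delta}^t}$.

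I do not expect a genuine obstacle here: once invertibility is established the rest is formal. The only points requiring care are the bookkeeping of grading shifts, so that the conclusion is an isomorphism of \emph{graded} rings matching $\mathrm{END}$ (rather than merely $\mathrm{End}$), and the convention for negative $t$. Should a hands-on description be preferred, one can instead note that the bubble-slide formulas of Corollary~\ref{cor:bubbleslides} move any closed $i$-diagram across the through $\delta$-strands, which is precisely what identifies the two whiskered subrings $1_{\mathcal{E}_{+\delta}^t}\mathrm{END}(\mathbf{1}_n)$ and $\mathrm{END}(\mathbf{1}_n)1_{\mathcal{E}_{+\delta}^t}$ inside $\mathrm{END}(\mathcal{E}_{+\delta}^t\mathbf{1}_n)$.
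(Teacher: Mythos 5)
Your proof is correct, and it reaches the paper's conclusion by a more formal route. The paper's own argument is purely diagrammatic: given an arbitrary $2$-endomorphism of $\mathcal{E}_{+\delta}^t\mathbf{1}_n$, it creates a $\delta$-bubble via~\eqref{deltabubbles} and applies~\eqref{deltainvert2} to slide closed subdiagrams past the $\delta$-strands one at a time, until only $t$ parallel upward $\delta$-strands and a single closed diagram remain (with the opposite orientation handling $t<0$). You instead package the content of those same two relations once and for all as the statement that $\mathcal{E}_{+\delta}\mathbf{1}_n$ is an invertible $1$-morphism (Lemma~\ref{lem:Extracatrels}\eqref{three}), and then invoke the formal fact that whiskering by an invertible $1$-morphism is an equivalence of hom-categories, hence fully faithful on $2$-morphisms. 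The two arguments rest on identical inputs --- the unit/counit manipulations in your equivalence are exactly the paper's ``create a $\delta$-bubble and apply~\eqref{deltainvert2}'' trick in disguise --- but your version makes the injectivity half of the isomorphism transparent (the paper does not spell it out), isolates the single nonformal ingredient (invertibility), and gets multiplicativity for free from the interchange law; the paper's version has the advantage of being self-contained at the level of diagrams and of exhibiting the reduction explicitly, which is what is actually used later when combining with Lemma~\ref{lem:full} to compute $\mathrm{End}(\mathcal{E}_{+\delta}^t)$. Your check that all witnessing $2$-isomorphisms have degree $0$, so that the conclusion holds for the graded rings $\mathrm{END}$ and not merely $\mathrm{End}$, is the right point to flag and is handled correctly.
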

\begin{proof}
Note that for $t=0$ there is nothing 
to prove. Let us now explain the proof for $t=1$. 
Given a diagram of the form 
$$
\figs{0.25}{Deltatrick1},
$$      
we can create a $\delta$-bubble by~\eqref{deltabubbles} and 
apply~\eqref{deltainvert2} to obtain 
$$
\begin{array}{ccc}
\txt{\figs{0.25}{Deltatrick2}}&=&\txt{\figs{0.25}{Deltatrick3}}.
\end{array}
$$  
This proves the lemma for $t=1$. For $t>1$ use the same trick repeatedly 
until you are left with a closed diagram and $t$ upward $\delta$-strands. 
For $t<0$, a similar trick can be applied using the opposite orientation 
on the $\delta$-strands.  
\end{proof}

Let $K_0(\mathrm{Kar}\Scat(n,n))$ be the split Grothendieck group of $\mathrm{Kar}\Scat(n,n)$. This 
is a $\mathbb{Z}[q,q^{-1}]$-module, where the action of $q$ is defined by 
$$q[X]:=[X\{1\}].$$
Furthermore, let 
$$K_0^{\mathbb{Q}(q)}(\mathrm{Kar}\Scat(n,n)):=K_0(\mathrm{Kar}\Scat(n,n))\otimes_{\mathbb{Z}[q,q^{-1}]}
\mathbb{Q}(q).
$$
\begin{defn}
Define the $\mathbb{Q}(q)$-linear 
algebra homomorphism $\gamma_n\colon \hat{\SD}(n,n)\to 
K_0^{\mathbb{Q}(q)}(\mathrm{Kar}\Scat(n,n))$ 
by 
$$\gamma_n(E_{\ii}1_{\lambda}):=[\mathcal{E}_{\ii}{\mathbf 1}_{\lambda}]\otimes 1\quad
\text{and}\quad \gamma_n(E_{+\delta}^t1_n):=[\mathcal{E}_{+\delta}^t {\mathbf 1}_n]\otimes 1$$
for any signed sequence $\ii$, $\lambda\in\Lambda(n,n)$ and $t\in \mathbb{Z}$.
\end{defn}

\begin{thm}
\label{thm:categorification}
The homomorphism $\gamma_n$ is well-defined and bijective. 
\end{thm}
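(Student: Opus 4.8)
The plan is to verify in turn the three assertions packaged in the statement — that $\gamma_n$ respects the relations, that it is surjective, and that it is injective — feeding each of the preceding lemmas into the appropriate step. First I would check well-definedness, i.e. that the prescription on the generators $E_{\ii}{\mathbf 1}_{\lambda}$ and $E_{+\delta}^t{\mathbf 1}_n$ is compatible with the defining relations of $\hat{\SD}(n,n)$ recorded in Theorem~\ref{thm:Eextrarels}. The relations \ref{rel1}--\ref{rel5} already hold among the classes $[\mathcal{E}_{\ii}{\mathbf 1}_{\lambda}]$, since they hold in the split Grothendieck group of the Schur quotient of $\mathcal{U}(\hat{\mathfrak{sl}}_n)$ treated in~\cite{MTh} (with $y=0$) and $\Scat(n,n)$ is a quotient of $\mathcal{U}(\hat{\mathfrak{gl}}_n)$. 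The extra relations i)--xi) are precisely the decategorifications of the $2$-isomorphisms of Lemma~\ref{lem:Extracatrels}: applying $[\,\cdot\,]$ to each $2$-isomorphism yields the corresponding identity of Grothendieck classes, and relation iii) in particular makes $[\mathcal{E}_{-\delta}^t{\mathbf 1}_n]$ inverse to $[\mathcal{E}_{+\delta}^t{\mathbf 1}_n]$, so that the assignment extends consistently to all $t\in\mathbb{Z}$. This shows $\gamma_n$ is a homomorphism of $\mathbb{Q}(q)$-algebras.

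For surjectivity, the plan is to show that the listed classes generate $K_0^{\mathbb{Q}(q)}(\mathrm{Kar}\,\Scat(n,n))$. Every $1$-morphism of $\Scat(n,n)$ is a composite of the generators $\mathcal{E}_{\pm i}$, $\mathcal{E}_{\pm\delta}$ and the identities; Lemma~\ref{lem:Extracatrels} then lets one absorb each $\mathcal{E}_{\pm\delta}$ into an adjacent $\mathcal{E}_{\pm i}$ (items i) and iv)--xi)), so that up to isomorphism and shift every object is a direct sum of objects $\mathcal{E}_{\ii}{\mathbf 1}_{\lambda}$ and of pure powers $\mathcal{E}_{\pm\delta}^t{\mathbf 1}_n$. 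Hence every indecomposable of the Karoubi envelope is a summand of one of these two kinds. For summands of $\mathcal{E}_{\ii}{\mathbf 1}_{\lambda}$, fullness of $\Psi_{n,n}$ (Lemma~\ref{lem:full}) together with the identification of the corresponding classes in~\cite{MTh} places their classes in the span of the $[\mathcal{E}_{\ii}{\mathbf 1}_{\lambda}]$; for the powers of $\mathcal{E}_{\pm\delta}$, Lemma~\ref{lem:END} identifies $\mathrm{END}(\mathcal{E}_{+\delta}^t{\mathbf 1}_n)$ with $\mathrm{END}({\mathbf 1}_n)$, whose degree-zero part is local, so $\mathcal{E}_{\pm\delta}^t{\mathbf 1}_n$ is itself indecomposable and its class is exactly $\gamma_n(E_{\pm\delta}^t 1_n)$. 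Thus $\gamma_n$ is surjective.

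Injectivity I would deduce by comparison with the already-known case $\hat{\SD}(n+1,n)$. Writing $\gamma_{n+1,n}$ for the isomorphism $\hat{\SD}(n+1,n)\cong K_0^{\mathbb{Q}(q)}(\mathrm{Kar}\,\Scat(n+1,n))$ of~\cite{MTh} (available because $n+1>n$), the plan is to verify that the square
\[
\begin{array}{ccc}
\hat{\SD}(n,n) & \xrightarrow{\gamma_n} & K_0^{\mathbb{Q}(q)}(\mathrm{Kar}\,\Scat(n,n))\\
\downarrow\iota_n & & \downarrow K_0(\mathcal{I}_n)\\
\hat{\SD}(n+1,n) & \xrightarrow{\gamma_{n+1,n}} & K_0^{\mathbb{Q}(q)}(\mathrm{Kar}\,\Scat(n+1,n))
\end{array}
\]
commutes, which I would check on the generators: $\iota_n$ sends $E_i{\mathbf 1}_{\lambda}\mapsto E_i{\mathbf 1}_{(\lambda,0)}$, $E_n{\mathbf 1}_{\lambda}\mapsto E_nE_{n+1}{\mathbf 1}_{(\lambda,0)}$ and $E_{+\delta}{\mathbf 1}_n\mapsto E_nE_{n-1}\cdots E_1E_{n+1}{\mathbf 1}_{(1^n,0)}$ (Proposition~\ref{prop:iota}), while $\mathcal{I}_n$ acts on the matching strands in precisely the corresponding way (Definition~\ref{defn:In}), so that the two composites agree on classes. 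Since $\gamma_{n+1,n}$ is bijective and $\iota_n$ is injective, $\gamma_{n+1,n}\circ\iota_n$ is injective; commutativity then forces $K_0(\mathcal{I}_n)\circ\gamma_n$, and hence $\gamma_n$, to be injective. Together with surjectivity this proves $\gamma_n$ bijective.

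The step I expect to require the most care is surjectivity, specifically ruling out unforeseen indecomposable summands of the new objects $\mathcal{E}_{\pm\delta}^t{\mathbf 1}_n$: this is exactly where Lemma~\ref{lem:END} and the locality of the degree-zero endomorphisms of ${\mathbf 1}_n$ are essential, and it is the one place where the reduction of Lemma~\ref{lem:Extracatrels} must be applied exhaustively rather than to a single relation. The commutativity of the square used for injectivity is a formality on generators, but for it to make sense one must know the vertical map $K_0(\mathcal{I}_n)$ is genuinely induced, i.e. that $\mathcal{I}_n$ is a well-defined $2$-functor — which is exactly Lemma~\ref{lem:Iwell-defined}.
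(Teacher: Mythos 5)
Your proposal follows essentially the same route as the paper's proof: well-definedness from the Khovanov--Lauda result together with Lemma~\ref{lem:Extracatrels}, surjectivity by reducing every indecomposable to a summand of some $\mathcal{E}_{\ii}{\mathbf 1}_{\lambda}$ or $\mathcal{E}_{+\delta}^t{\mathbf 1}_n$ and then invoking Lemmas~\ref{lem:full} and~\ref{lem:END} (plus the arguments of~\cite{MSVschur,MTh}), and injectivity via the commutative square with $\iota_n$ and the isomorphism $\gamma_{n+1}$ from~\cite{MTh}. The argument is correct, and your added remarks (checking commutativity on generators, and noting that Lemma~\ref{lem:Iwell-defined} is what makes $K_0(\mathcal{I}_n)$ available) only make explicit what the paper leaves implicit.
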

\begin{proof}
Well-definedness follows from the corresponding statement for $\Ucataff$ 
by Khovanov and Lauda in~\cite{KL3} and from Lemma~\ref{lem:Extracatrels}.

Let us now show surjectivity. By Lemma~\ref{lem:Extracatrels}, 
any indecomposable object in $\mathrm{Kar}\Scat(n,n)$ is isomorphic to an object 
of the form $(X,e)$, where $X$ is either of the form $\mathcal{E}_{+\delta}^t$ 
for some $t\in\mathbb{Z}$ or of the form $\mathcal{E}_{\ii}$ for some signed 
sequence $\ii$, and $e$ is some idempotent in $\mathrm{End}(X)$. By 
Lemmas~\ref{lem:full} and~\ref{lem:END}, we see 
that $\mathrm{End}(\mathcal{E}_{+\delta}^t)\cong 
\mathbb{Q}1_{\mathcal{E}_{+\delta}^t}$. Therefore $\mathcal{E}_{+\delta}^t$ is 
indecomposable in $\mathrm{Kar}\Scat(n,n)$. Note that its Grothendieck class 
lies indeed in the image of $\gamma_n$. By Lemma~\ref{lem:full} we know that 
$\mathrm{End}_{\Scat(n,n)}(\mathcal{E}_{\ii})$ is the surjective image of the 
analogous endomorphism ring in $\Ucataff$, for 
any signed sequence $\ii$. By Khovanov and Lauda's Theorem 1.1~\cite{KL3} 
and some general arguments which were explained in detail in~\cite{MSVschur}, 
and also used in~\cite{MTh}, this implies that the Grothendieck classes of 
all direct summands of $\mathcal{E}_{\ii}$ in $\mathrm{Kar}\Scat(n,n)$ are contained in 
the image of $\gamma_n$. This concludes the proof that $\gamma_n$ is 
surjective. 

For injectivity, consider the following commutative diagram 
$$
\begin{CD}
\hat{\SD}(n,n) @>{\iota_n}>> \hat{\SD}(n+1,n)\\
@V{\gamma_n}VV  @VV{\gamma_{n+1}}V\\
K_0^{\mathbb{Q}(q)}(\mathrm{Kar}\Scat(n,n)) @>{K_0(\mathcal{I}_n)\otimes 1}>> 
K_0^{\mathbb{Q}(q)}(\mathrm{Kar}\Scat(n+1,n))
\end{CD},
$$
where $\gamma_{n+1}$ is the isomorphism from Theorem 6.4 in~\cite{MTh}. 
Since $\iota_n$ and $\gamma_{n+1}$ are both injective, 
their composite is also injective. The commutativity of the diagram above 
then implies that $\gamma_n$ is injective too. 
\end{proof}

\bibliographystyle{alpha}
\bibliography{biblioAFFnn}
\vspace{0.1in}
\noindent M.M.: { \sl \small Center for Mathematical Analysis, Geometry, and Dynamical Systems, Departamento de Matem\'{a}tica, Instituto Superior T\'{e}cnico, 
1049-001 Lisboa, Portugal; Departamento de Matem\'{a}tica, FCT, Universidade do Algarve, Campus de Gambelas, 8005-139 Faro, Portugal} 
\newline \noindent {\tt \small email: mmackaay@ualg.pt}

\noindent A.-L.T.: { \sl \small Matematiska Institutionen, 
Uppsala Universitet, 75106 Uppsala, Sverige} 
\newline \noindent {\tt \small email: anne-laure.thiel@math.uu.se}

\end{document}